\documentclass[11pt, reqno, a4paper]{amsart}
\usepackage[margin=1.2in]{geometry}
\numberwithin{equation}{section}
\usepackage{amssymb,amsfonts,amsthm}
\usepackage[utf8]{inputenc}
\usepackage{listings}
\usepackage{bm}
\usepackage[hyperpageref]{backref}
\usepackage{esint}
\usepackage{color}
\usepackage[bookmarks]{hyperref}
\usepackage{siunitx}
\usepackage{bigints}
\usepackage{hyperref}
\usepackage{mathtools}
\usepackage{tikz}

\allowdisplaybreaks[4]

\newtheoremstyle{myremark}{10pt}{10pt}{}{}{\bfseries}{.}{.5em}{}

 \newtheorem{theorem}{Theorem}[section]
 
 \newtheorem{lemma}{Lemma}[section]
 
 \theoremstyle{definition}
 \newtheorem{definition}{Definition}[section]
 
 \newtheorem{rem}{Remark}[section]
\usepackage{hyperref}

\allowdisplaybreaks[4]

\begin{document}

\title[Quantitative Stability of the Fractional Hardy Inequalities]{Quantitative Stability for Fractional Hardy Inequalities: Rearrangement-Free Techniques and Emden-Fowler Analysis}

\author{Avas Banerjee, Debdip Ganguly, and Vivek Sahu}

\address{ Theoretical Statistics and Mathematics Unit,
Indian Statistical Institute, Delhi Centre, S.J. Sansanwal Marg, New Delhi, Delhi 110016, India}

\email{A. Banerjee: avas24r@isid.ac.in}
\email{D. Ganguly: debdip@isid.ac.in}
\email{V. Sahu: vivek@isid.ac.in, viiveksahu@gmail.com}

\subjclass[2020]{46E35, 35A23, 26A33, 26D15, 47G30, 42B37}

\keywords{Emden-Fowler, Marcinkiewicz space, Hardy inequalities, Quantitative stability, Hardy-Heisenberg Uncertainty}

\date{}

\dedicatory{}

\begin{abstract}
A classical result due to Frank and Seiringer \cite{Frank2008} asserts that for $1 \leq p < \frac{N}{s}$, there exists a sharp constant $\mathcal{C}_{N,s,p} > 0$ such that
$$
\delta_{s,p}(u)
:= \int_{\mathbb{R}^{N}}\int_{\mathbb{R}^{N}} \frac{|u(x)-u(y)|^{p}}{|x-y|^{N+sp}} \, dx \, dy 
- \mathcal{C}_{N,s,p} \int_{\mathbb{R}^{N}} \frac{|u(x)|^{p}}{|x|^{sp}} \, dx 
\;\geq\; 0,
$$
for all $u \in W^{s,p}(\mathbb{R}^N)$. The optimal constant is explicitly known. In this article, we investigate quantitative refinements of the above inequality. Our first result shows that, under the normalization
$
\int_{\mathbb{R}^{N}} \frac{|u(x)|^{p}}{|x|^{sp}} \, dx = 1,
$
the inequality 
\[
\delta_{s,p}(u) \;\gtrsim \;\bigl(\mathrm{dist}_{s,p}(u,\mathcal{Z})\bigr)^{\alpha},
\]
holds, where $\alpha = \max\{4, 2p\}$, $\mathcal{Z}$ denotes the family of ``virtual'' extremals, and the distance is measured in Marcinkiewicz (weak-$L^{p^{*}_{s}}$) space. Surprisingly, the stability exponent remains constant for \(p \leq 2\), while it depends on \(p\) for \(p > 2\). Our approach is based on a localized Poincar\'e-Sobolev inequality, combined with a suitable rescaling and Lorentz embeddings.  In particular, we exploit a careful decomposition of the nonlocal energy together with Lorentz space estimates, which enables us to control the deficit $\delta_{s,p}(u)$ in terms of the distance to the set $\mathcal{Z}$. The method also applies to the local case $s=1$, the argument is entirely rearrangement-free and strikingly, the exponent in the stability estimate is substantially improved compared to the existing literature. 
For $p=2$, via an Emden--Fowler correspondence and pseudo-differential operators, we show that the nonlocal Hardy deficit coincides with the local one, map extremizers accordingly, and obtain quantitative stability on $\mathbb{R}\times \mathbb{S}^{N-1}$ by exploiting the diagonalization of the fractional Hardy quadratic form on the cylinder due to Frank, Lieb, and Seiringer \cite{FrankLieb2008}. As an application, we establish a Hardy–Heisenberg-type uncertainty principle in the nonlocal setting, which appears to be new in the literature.

\end{abstract}

\maketitle


\section{Introduction}\label{Section 1: Introduction}

In recent years, fractional Hardy inequalities have attracted considerable attention due to their fundamental role in the analysis of nonlocal operators, fractional Sobolev spaces, and a wide range of problems in partial differential equations and mathematical physics. These inequalities provide a sharp quantitative link between the nonlocal $W^{s,p}$-seminorm and singular weights, and can be viewed as a natural extension of the classical Hardy inequality to the fractional setting. A substantial body of work has been devoted to their refinement, including the identification of optimal constants, extensions to more general kernels and domains, and the study of remainder terms and stability properties. Among the most influential contributions is the work of Frank and Seiringer \cite{Frank2008}, who established a sharp fractional Hardy inequality by means of a nonlinear ground state representation. Before stating their result let us introduce the Homogeneous Sobolev spaces: 

\[
[u]_{\dot W^{s,p}(\mathbb{R}^{N})}^{p}
:=
\int_{\mathbb{R}^{N}} \int_{\mathbb{R}^{N}}
\frac{|u(x)-u(y)|^{p}}{|x-y|^{N+sp}}\,dx\,dy .
\]

For \( N\geq 1\) and  \(0<s<1\), the homogeneous fractional Sobolev spaces are defined by
\[
\dot W^{s,p}(\mathbb{R}^{N})
:= \overline{C_c^\infty(\mathbb{R}^{N})}^{\, [\,\cdot\,]_{\dot W^{s,p}}},
\qquad 1\le p<\frac Ns ,
\]
and
\[
\dot W^{s,p}(\mathbb{R}^{N}\setminus\{0\})
:= \overline{C_c^\infty(\mathbb{R}^{N}\setminus\{0\})}^{\, [\,\cdot\,]_{\dot W^{s,p}}},
\qquad p>\frac Ns .
\]
More precisely, they proved that for $1 \leq p < \frac{N}{s}$ the inequality holds for all $u  \in \dot W^{s,p}(\mathbb{R}^N)$, while for $p > \frac{N}{s}$ it remains valid for $u \in \dot W^{s,p}(\mathbb{R}^N \setminus \{0\})$, and in both cases one has
\begin{equation}\label{Fractional Hardy}
\int_{\mathbb{R}^{N}}\int_{\mathbb{R}^{N}} \frac{|u(x)-u(y)|^{p}}{|x-y|^{N+sp}} \, dx \, dy \geq \mathcal{C}_{N,s,p} \int_{\mathbb{R}^{N}} \frac{|u(x)|^{p}}{|x|^{sp}} \, dx,
\end{equation}
where the constant $\mathcal{C}_{N,s,p}>0$ is optimal and admits the explicit representation
\begin{equation*}
\mathcal{C}_{N,s,p} := 2 \int_{0}^{1} r^{sp-1} \bigl|1-r^{(N-sp)/p}\bigr|^{p} \Phi_{N,s,p}(r) \, dr,
\end{equation*}
with
\begin{align*}
\Phi_{N,s,p}(r) &:= |\mathbb{S}^{N-2}| \int_{-1}^{1} \frac{(1-t^{2})^{(N-3)/2}}{(1-2rt+r^{2})^{(N+sp)/2}} \, dt, \quad N \geq 2, \\
\Phi_{1,s,p}(r) &:= \frac{1}{(1-r)^{1+sp}} + \frac{1}{(1+r)^{1+sp}}, \quad N=1.
\end{align*}
\medskip 

In \cite{Frank2008}, Frank and Seiringer established fractional Hardy inequalities with remainder terms for $p \geq 2$ via the ground state representation; this was later extended to the range $1<p<2$ by Dyda and Kijaczko \cite{Dyda2024a}. In \cite{FrankLieb2008}, Frank, Lieb, and Seiringer proved Hardy--Lieb--Thirring inequalities for fractional Schrödinger operators. For the half-space $\mathbb{R}^N_+$, the optimal constant in the case $p=2$ and $sp \neq 1$ was obtained by Bogdan and Dyda \cite{Dyda2011}, and subsequently extended to all $p \geq 1$ by Frank and Seiringer \cite{Frank2010}, again using the ground state representation. 

This method has proved fundamental in extending fractional Hardy inequalities to weighted fractional spaces, yielding optimal results for point singularities and the half-space (Dyda--Kijaczko \cite{Dyda2024}), and more generally for singularities supported on $k$-dimensional planes, $1 \leq k < N$ (see \cite{kijaczko2025}). In a different direction, Loss and Sloane \cite{Loss2010} established fractional Hardy inequalities on general domains with optimal constants expressed via suitable distance functions, recovering the boundary distance in convex domains; related results were obtained by Brasco and Cinti \cite{Brasco2018}. 

For boundary singularities, Chen and Song \cite{Chen2003} first treated the case $p=2$, later extended to $p>0$ by Dyda \cite{Dyda2004} under suitable assumptions on $s$ and $p$. More recently, Adimurthi and collaborators \cite{AdiJFA2026, AdimurthiCVPDE2026} resolved the remaining critical cases, introducing optimal logarithmic corrections and extending the theory to singularities supported on smooth submanifolds. For further recent developments, we refer to \cite{AdimurthiCCM2026, Adi2025, Bal2022, Bianchi2024a, Bianchi2024, Bianchi2026, Bogdan2022, Cinti2024, Dyda2023, Gyula2026, Lizaveta2026, Vivek2025}, without any claim of completeness.

\medskip

In contrast, the lack of compactness and the non-attainment phenomena associated with \eqref{Fractional Hardy} remain relatively less explored, particularly in comparison with their local counterparts. Likewise, the question of quantitative stability—namely, whether the deficit in \eqref{Fractional Hardy} can effectively control the distance to the family of extremal profiles—has not yet been fully understood and continues to present significant analytical challenges.
For the fractional Hardy inequality, the extremal profiles were identified in \cite{Frank2008} and are explicitly given by
\begin{equation}\label{fractional-extremals}
\omega_{a}(x) = a \;|x|^{-\frac{N-sp}{p}}, \qquad a \in \mathbb{R} \setminus \{0\}.
\end{equation}
 However, these functions do not belong to $W^{s, p}(\mathbb{R}^N)$ due to their singular behavior at the origin and lack of integrability at infinity and consequently equality in \eqref{Fractional Hardy} is never attained within the natural functional framework. These profiles capture the exact scaling and homogeneity of the inequality, and in this sense reflects the reason behind the non-attainment. This non-attainment phenomenon motivates the search for refined versions of the inequality. In particular, one seeks improvements that incorporate a remainder term measuring, in a suitable sense, the distance between a given function and the manifold of extremal profiles \eqref{fractional-extremals}. Such refinements fall under the scope of \emph{quantitative stability}.

\medskip 

A central issue in this direction is the stability of \eqref{Fractional Hardy}. In the absence of true extremizers, it is natural to investigate whether near-extremizing sequences exhibit rigidity. More precisely, one is led to the following questions:

\begin{itemize}

\item[$(1)$] \emph{If a function nearly saturates the fractional Hardy inequality, must it be close (in an appropriate topology) to one of the profiles \eqref{fractional-extremals}?}

\medskip 

\item[$(2)$] \emph{Can the deficit in \eqref{Fractional Hardy} be quantitatively controlled by the distance to the set of virtual extremals?}
\end{itemize}
To this end, it is natural to look for an ambient space where  the virtual extremizers of type \eqref{fractional-extremals} lie in, it is essential to identify an ambient space that accommodates their singular behavior while remaining compatible with the scaling of the fractional Hardy inequality. In this direction, a canonical candidate is the weak Lebesgue (Marcinkiewicz) space $L^{p_s^*,\infty}(\mathbb{R}^N)$, where 
\[
p_s^*=\frac{Np}{N-sp}
\]
is the critical Sobolev exponent associated with the fractional setting. As indicated in Lemma~\ref{Marcinkiewicz}, this space contains all the extremal profiles and is expected to be the smallest rearrangement invariant space with this property.

Recall that the quasi-norm in $L^{p_s^*,\infty}(\mathbb{R}^N)$ is defined by
\[
\|u\|_{L^{p_s^*,\infty}(\mathbb{R}^N)}=\sup_{t>0} t^{\frac{1}{p_s^*}} u^*(t),
\]
where $u^*$ denotes the decreasing rearrangement of $u$, as defined in \eqref{1D}. This characterization captures the borderline integrability of functions with power-type singularities. In particular, for the extremal profiles $\omega_a(x)=a \, |x|^{-\frac{N-sp}{p}}$, one verifies that
\[
|\{x \in \mathbb{R}^N : |\omega_a(x)|>\lambda\}| \sim \lambda^{-p_s^*},
\]
which shows that $\omega_a \in L^{p_s^*,\infty}(\mathbb{R}^N)$ while $\omega_a \notin L^{p_s^*}(\mathbb{R}^N)$. We refer Section \ref{Section 2 : preliminaries} for more details.  Within this framework, $L^{p_s^*,\infty}(\mathbb{R}^N)$ emerges as the smallest rearrangement invariant space that precisely captures the decay and scaling of the virtual extremals, thereby providing a natural minimal setting for the \it quantitative stability analysis of the fractional Hardy inequality\rm.
Moreover, in a recent work \cite[Theorem~1.3]{NGT}, Nitti, Glaudo, and K\"onig establish quantitative stability estimates for the fractional Caffarelli–Kohn–Nirenberg(CKN) inequality, showing that the deficit controls the distance from the manifold of optimizers which lies in the space $D^{s}_{\alpha}(\mathbb{R}^{n})$ denotes the closure of $C^{\infty}_{c}(\mathbb{R}^{n})$ with respect to the norm induced by the fractional CKN inequality.

\medskip


\subsection{Local case: comparison and known results}

 The classical Hardy inequality: for $N \geq 2$ and $1 < p < N$,
\begin{equation}\label{classical-hardy}
 \int_{\mathbb{R}^{N}} |\nabla u(x)|^{p} \,dx \geq \left(\frac{N-p}{p}\right)^{p} \int_{\mathbb{R}^{N}} \frac{|u(x)|^{p}}{|x|^{p}}\,dx ,
\end{equation}
for every $u \in D^{1,p}(\mathbb{R}^N)$, the completion of $C_c^\infty(\mathbb{R}^N)$ with respect to $\|u\| = \big(\int_{\mathbb{R}^N} |\nabla u|^p\,dx\big)^{1/p}$. The constant in \eqref{classical-hardy} is optimal but not attained in $W^{1,p}(\mathbb{R}^N)$, reflecting the presence of the virtual extremals
\begin{equation}\label{Local virtual-extremals}
v_a(x) = a\,|x|^{-\frac{N-p}{p}}, \qquad a \neq 0,
\end{equation}
which lie outside the energy space but belong to the weak Lebesgue space $L^{p^*,\infty}(\mathbb{R}^N)$, where $p^* = \frac{Np}{N-p}$.

Quantitative stability for \eqref{classical-hardy} was established by Cianchi--Ferone \cite{Cianchi2008}. Introducing the scale-invariant distance
\begin{equation*}
d_p(u) = \inf_{a \in \mathbb{R}} 
\frac{\|u - v_a\|_{L^{p^*,\infty}(\mathbb{R}^N)}}{\|u\|_{L^{p^*,p}(\mathbb{R}^N)}},
\end{equation*}
they proved that there exists $C = C(N,p) > 0$ such that
\begin{equation}\label{stable-hardy-local}
\int_{\mathbb{R}^{N}} |\nabla u(x)|^{p} \,dx 
- \left(\frac{N-p}{p}\right)^{p} \int_{\mathbb{R}^{N}} \frac{|u(x)|^p}{|x|^p} \, dx 
\geq C\, (d_p(u))^{2p^*} \int_{\mathbb{R}^{N}} \frac{|u(x)|^p}{|x|^p} \, dx.
\end{equation}

For further developments on local Hardy inequalities, including geometric improvements, sharp constants, and stability in various settings, we refer to 
\cite{chaudhuri2002improved, akutagawa2013geometric, berchio2017sharp, berchio2020optimal, carron1997inegalites, chen2023sharp, chen2024stability, Chen2026MathAnn, do2024scale, Do2026, flynn2023hardy, Nguyen2026} and the references therein, without claim of any completeness.
For the classical (Local) Hardy inequality, stability was previously obtained by Cianchi and Ferone \cite{Cianchi2008} via rearrangement and symmetrization techniques. Their approach reduces the problem to a one-dimensional setting for radially decreasing functions, where stability is established and subsequently lifted to the full inequality \eqref{classical-hardy}. In contrast, our method avoids rearrangement arguments altogether. Indeed, several techniques that are effective in the local setting do not appear to extend in a natural way to the nonlocal framework, primarily due to the lack of compatibility with the intrinsic nonlocal structure of the energy. To overcome these difficulties, we develop a different approach based on a scale-invariant formulation of the fractional Poincar\'e-Sobolev inequality, which allows us to capture the correct notion of distance and to derive quantitative stability estimates without relying on symmetrization.   This framework allows us to control the associated distance functional and to derive quantitative stability estimates in the fractional setting. Our approach yields an improved quantitative estimate in the local case, in which the exponent of the distance term $(d_p(u))^{2p^*}$ is replaced by $(d_p(u))^{\max\{4,\,2p\}}$. In particular, for $p \geq 2$ this provides a genuine strengthening of the corresponding local stability results. For $1 < p < 2$, such a strengthening holds under the additional condition $p > \frac{2N}{N+2}$. The optimality of this exponent, however, remains an open problem; see Theorem~\ref{Theorem: Improvement of stability exponent C and F result} for the precise formulation of the result.

\medskip
\subsection{Highlights of our main stability results in the non-local case.}
The next two theorems provide quantitative stability estimates for the fractional Hardy inequality. Owing to the different structure of the remainder terms in the regimes $p \geq 2$ and $1 < p < 2$, the analysis naturally splits into two cases. In each setting, we introduce a suitable notion of distance, formulated in terms of appropriate Marcinkiewicz-type norms, and establish stability with respect to these distances. In accordance with \eqref{Fractional Hardy}, we define the fractional Hardy deficit for $0 < s < 1$ by
\begin{equation}\label{fractional hardy deficit}
    \delta_{s,p}(u)
:= \int_{\mathbb{R}^{N}}  \int_{\mathbb{R}^{N}} \frac{|u(x)-u(y)|^{p}}{|x-y|^{N+sp}} \, dx \, dy 
- \mathcal{C}_{N,s,p} \int_{\mathbb{R}^{N}} \frac{|u(x)|^{p}}{|x|^{sp}} \, dx,
\end{equation}
which is nonnegative in view of \eqref{Fractional Hardy}.

As shown in \cite[Lemma~3.1]{FrankLieb2008}, for $p=2$, \eqref{fractional hardy deficit} is equivalent (up to a constant) to the following definition,
\begin{equation}\label{Fourier definition}
    \delta_{s,2}(u):= \int_{\mathbb{R}^{N}} |\xi|^{2s}|\widehat{u}(\xi)|^2\, d\xi  -C_{N,s}\int_{\mathbb{R}^{N}}|x|^{-2s}|u(x)|^2\,dx,
\end{equation}
where $C_{N,s}$ is the sharp constant given by
\begin{equation}\label{Defn: C_N,s}
    C_{N,s} = 2^{2s} \frac{\Gamma^2 \left( \frac{N+2s}{4} \right)}{\Gamma^2 \left( \frac{N-2s}{4} \right)},
\end{equation}
and $$\widehat{u}(\xi):=(2\pi)^{-\frac{N}{2}}\int_{\mathbb{R}^N}u(x)e^{-i\xi\cdot x}\,\mathrm{d}x, \quad \xi \in \mathbb{R}^N$$ denotes the Fourier transform of $u$.

\medskip

We introduce a notion of distance from the family of fractional extremals for the case $p \geq 2$ as follows:
\begin{equation}\label{fractional-distance}
    d_{s,p}(u) = \inf_{a \in \mathbb{R}} \frac{\| u-\omega_{a} \|_{L^{p^{*}_{s}, \infty}(\mathbb{R}^{N})}}{ \| u \|_{L^{p^{*}_{s}, p}(\mathbb{R}^{N})}},
\end{equation}
where $ \| u \|_{L^{p^{*}_{s}, p}(\mathbb{R}^{N})}$ denotes the Lorentz norm (See Definition~\ref{defi-lorentz}, Section~\ref{Section 2 : preliminaries}). The main results of this article is a quantitative improvement of the fractional Hardy inequality, which involves the distance \eqref{fractional-distance}. 
In this article, we shall restrict ourselves to the case
\(1 \le p < \frac{N}{s}.\)
The statement of the main result for $p \geq 2$ is given below:

\begin{theorem}\label{Theorem 1}
Let $N \geq 1$, $p \geq 2$, and $s \in (0,1)$ satisfy $sp < N$. Then there exists a constant $C = C(N,s,p) > 0$ such that the stability estimate
\begin{equation}\label{Stability ineq: p>2}
\delta_{s,p}(u) \geq C\;\big(d_{s,p}(u)\big)^{2p} \int_{\mathbb{R}^N} \frac{|u(x)|^p}{|x|^{sp}}\; dx
\end{equation}
holds for all $u \in \dot W^{s,p}(\mathbb{R}^{N})$.
\end{theorem}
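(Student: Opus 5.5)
We would use the Frank--Seiringer ground state representation with remainder for $p\ge 2$. Writing $u = \omega_1 v$ with $\omega_1(x)=|x|^{-\frac{N-sp}{p}}$, their result (valid for $p\ge2$) gives
\[
\delta_{s,p}(u)\;\geq\; c_p \int_{\mathbb{R}^N}\int_{\mathbb{R}^N} \frac{|v(x)-v(y)|^p}{|x-y|^{N+sp}}\,\frac{dx\,dy}{|x|^{\frac{N-sp}{2}}|y|^{\frac{N-sp}{2}}} =: c_p\, J(v).
\]
The goal is then to show that $J(v)$ controls $\|u-\omega_a\|_{L^{p_s^*,\infty}}^{2p}/\|u\|_{L^{p^*_s,p}}^{p}$ for a suitable $a$, after normalizing $\int |u|^p|x|^{-sp}=1$. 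By homogeneity both sides scale the same way, so this normalization is harmless. We will also use the fact that $\int |u|^p|x|^{-sp}\,dx$ is comparable to $\|u\|^p_{L^{p_s^*,p}}$, a Hardy--Lorentz equivalence: $\lesssim$ follows from Hardy--Littlewood, and $\gtrsim$ follows from the embedding $\dot W^{s,p}\subset L^{p_s^*,p}$ combined with the Hardy inequality.

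Next we localize on dyadic annuli $A_k=\{2^k\le |x|<2^{k+1}\}$. On $A_k\cup A_{k+1}$ the weight $|x|^{-(N-sp)/2}|y|^{-(N-sp)/2}$ is comparable to $2^{-k(N-sp)}$. Applying the fractional Poincaré inequality on the rescaled annulus gives
\[
\int_{A_k}|v-\bar v_k|^p|x|^{-N}\,dx \lesssim J_k(v),
\]
where $\bar v_k$ is the mean of $v$ on $A_k$ and the scaling factors cancel precisely because of the $\omega_1$ weights. Since $\sum_k J_k\lesssim J$, the function $v$ is close to a step function $\sum_k \bar v_k\chi_{A_k}$. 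Comparing neighbouring means, again via Poincaré on $A_k\cup A_{k+1}$, gives $|\bar v_k-\bar v_{k+1}|^p\lesssim J_k$.

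The main obstacle is that the means $\bar v_k$ may drift over infinitely many scales, while the $\ell^p$-summable differences control only the $\ell^\infty$ oscillation through a telescoping bound, and that bound loses a factor of $(\#\text{scales})$. To handle this, we use the normalization $\sum_k |\bar v_k|^p\approx \int|u|^p|x|^{-sp}=1$, which holds because $|\omega_1|^p|x|^{-sp}=|x|^{-N}$ and each annulus has $|x|^{-N}$-measure of order one. We then choose $a$ to be a value of $\bar v_k$ of maximal modulus, say at scale $k_0$. For scales $k$ with $|\bar v_k-a|\geq \lambda$, the drift from $k_0$ to $k$ is at least $\lambda$. This forces either a large $J$ or, through the summability constraint, a small number of scales where $|\bar v_k|\ge |a|/2$. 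In the latter case $|a|$ itself is controlled by $J^{1/2p}$ via an interpolation of the form $|a|^{2p}\lesssim J\cdot\sum|\bar v_k|^p$, and this interpolation is where the exponent $2p$ is produced. Concretely, we bound $\sup_k|\bar v_k-a|^{2}\lesssim J^{1/p}\,(\sum|\bar v_k|^p)^{1/p}$ by a discrete Gagliardo--Nirenberg/Agmon-type inequality on $\mathbb{Z}$.

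Finally, we convert this to the weak norm. On $A_k$ we have $|u-\omega_a| \le \omega_1(|v-\bar v_k|+|\bar v_k-a|)$. The second term contributes at most $\sup_k|\bar v_k-a|\,\|\omega_1\|_{L^{p^*_s,\infty}}$. For the first term, we estimate $\|\omega_1(v-\bar v)\|_{L^{p^*_s,\infty}}\lesssim (\sum_k\int_{A_k}|v-\bar v_k|^p|x|^{-N})^{1/p}\lesssim J^{1/p}$, using the weak-type estimate $\lambda^{p_s^*}|\{\omega_1 |w|>\lambda\}|$ on each annulus together with the fact that $\ell^p\subset\ell^{p^*_s}$ since $p<p^*_s$. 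Combining these gives $d_{s,p}(u)^{2p}\lesssim J \lesssim \delta_{s,p}(u)$ under the normalization, which is \eqref{Stability ineq: p>2}.
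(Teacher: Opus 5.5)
\textbf{The gap: your discrete interpolation step is false for $p>2$.} You correctly diagnose the obstacle: telescoping the annular means across many scales loses a factor that depends on the number of scales. The inequality you use to overcome it is
\[\sup_k|\bar v_k-a|^2\lesssim J^{1/p}\Bigl(\sum_k|\bar v_k|^p\Bigr)^{1/p},\]
or equivalently $|a|^{2p}\lesssim J\sum_k|\bar v_k|^p$. This bound is not invariant under stretching in the scale variable. Take $b_k=\phi(k/L)$ with a fixed bump $\phi$. Then $\|b\|_{\ell^\infty}=1$, $\|b\|_{\ell^p}\approx L^{1/p}$ and $\|\Delta b\|_{\ell^p}\approx L^{1/p-1}$. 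So the right-hand side is $\approx L^{2/p-1}$, which tends to $0$ when $p>2$.

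The correct discrete Gagliardo--Nirenberg inequality is $\|b\|_{\ell^\infty}^p\le C\|b\|_{\ell^p}^{p-1}\|\Delta b\|_{\ell^p}$. Under your normalization it gives only $|a|^{p^2}\lesssim J$. Your own counting heuristic gives the same: at most $\lesssim|a|^{-p}$ consecutive scales have $|\bar v_k|\ge|a|/2$, and a drop of size $|a|/2$ over $n$ steps costs $\sum|\Delta b|^p\ge n^{1-p}(|a|/2)^p\gtrsim|a|^{p^2}$. So your scheme proves the estimate with exponent $p^2$, which equals $2p$ only at $p=2$. At $p=2$, Agmon's inequality $\|b\|_{\ell^\infty}^2\le 2\|b\|_{\ell^2}\|\Delta b\|_{\ell^2}$ holds and your argument goes through.

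\textbf{The route through $J$ cannot be repaired.} The chain $\delta_{s,p}\ge c_pJ\gtrsim d_{s,p}^{2p}\int|u|^p|x|^{-sp}\,dx$ fails for $p>2$. Let $\omega=|x|^{-\frac{N-sp}{p}}$ and choose $\phi\in C_c^\infty((-1,1))$ with $0\le\phi\le 1$ and $\phi=1$ on $[-\frac12,\frac12]$. Set $f_L(t)=\phi(t/L)$, $v_L=f_L(\ln|x|)$ and $u_L=\omega v_L$.
\begin{itemize}
\item The Hardy integral and the Lorentz norm are comparable here: $\int|u_L|^p|x|^{-sp}\,dx\approx L\approx\|u_L\|^p_{L^{p^*_s,p}}$.
\item The weak distance satisfies $\inf_a\|u_L-\omega_a\|_{L^{p^*_s,\infty}}\approx 1$. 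Indeed $u_L-\omega_a=-a\omega$ on $B_{e^{-L}}$ and $u_L-\omega_a=(1-a)\omega$ on $\{e^{-L/2}<|x|<e^{L/2}\}$. Both pieces have weak norm comparable to $\|\omega\|_{L^{p^*_s,\infty}}$.
\item Hence $d_{s,p}(u_L)^{2p}\int|u_L|^p|x|^{-sp}\,dx\approx L^{-1}$.
\item For radial $v=f(\ln|x|)$ one has $J=\iint|f(t)-f(t')|^pK(t-t')\,dt\,dt'$. Here $K(\tau)\sim|\tau|^{-1-sp}$ near $0$ and $K$ decays exponentially, so $\int|\tau|^pK<\infty$.
\item Therefore $J(v_L)\lesssim\|f_L'\|_{L^p}^p\approx L^{1-p}$, which is $\ll L^{-1}$ when $p>2$.
\end{itemize}
So for $p>2$, any proof of the exponent $2p$ must use more of $\delta_{s,p}$ than the Frank--Seiringer remainder, for instance a term of $|v|^{p-2}|\nabla v|^2$ type as in the local expansion of $|X+Y|^p$.

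\textbf{Comparison with the paper.} The paper uses the same architecture as you. It combines the Frank--Seiringer remainder, dyadic annuli, the scaled Poincar\'e--Sobolev inequality with exponent $p^*_s$, chaining of annular means, and the level set $\{v>\varepsilon(u)^\gamma\}$ compared with $\varepsilon(u)^\gamma\omega$. It never confronts your obstacle. Instead it chains with $|(v)_{A_k}-(v)_{A_{k_0}}|^{p^*_s}\le 2^{p^*_s-1}\sum_{\ell=k_0}^{k-1}|(v)_{A_\ell}-(v)_{A_{\ell+1}}|^{p^*_s}$. For $n$ equal increments the two sides differ by a factor $\approx n^{p^*_s-1}$, so no such constant exists. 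The example above also contradicts the paper's intermediate bound $\inf_{a\ge0}\|u-\omega_a\|^{2p}_{L^{p^*_s,\infty}}\le C\varepsilon(u)\|u\|^p_{L^{p^*_s,p}}$.

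\textbf{Two smaller points.}
\begin{itemize}
\item The lower bound $\int|u|^p|x|^{-sp}\,dx\gtrsim\|u\|^p_{L^{p^*_s,p}}$ is false. A bump of radius $r$ at distance $R\gg r$ from the origin makes the ratio $\approx(r/R)^{sp}$. Sobolev and Hardy both bound these quantities by $[u]^p_{W^{s,p}}$, so they cannot be compared that way. Only the Hardy--Littlewood direction is actually needed.
\item The weak-norm estimate of $\omega(v-\bar v)$ requires the $L^{p^*_s}$ Poincar\'e--Sobolev inequality on each annulus. The $L^p$ Poincar\'e inequality you wrote is not enough.
\end{itemize}
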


\smallskip

\begin{rem}
The quantitative improvement above is proved under the assumption $p \geq 2$, since it relies on a remainder term that is only available in this range. For $1 < p < 2$, the remainder term has a different structure, and the argument does not extend. By homogeneity, one may equivalently impose the normalization
$
\int_{\mathbb{R}^N} \frac{|u(x)|^p}{|x|^{sp}} \, dx = 1.$
Under this constraint, Theorem~\ref{Theorem 1} reduces to
\[
\delta_{s,p}(u) \geq C \, \big(d_{s,p}(u)\big)^{2p}.
\]
\end{rem} 

\medskip

For the range $1 < p < 2$, the quantitative improvement requires a different choice of remainder term, leading naturally to a modified notion of distance. This distance is intrinsically defined to the the case $1<p<2$.

We first observe that
\[
\| \omega \|_{L^{p^{*}_{s}, \infty}(\mathbb{R}^{N})}
= \| \omega^{\frac{p}{2}} \|^{\frac{2}{p}}_{L^{q, \infty}(\mathbb{R}^{N})},
\qquad \text{where} \quad q = \frac{2N}{N-sp}.
\]
Motivated by this relation, throughout the paper we set
\[
q := \frac{2N}{N-sp}.
\]

Accordingly, for $1 < p < 2$, we introduce the distance functional $\mathcal{D}_{s,p}$ defined by
\begin{equation}
\mathcal{D}_{s,p}(u)
= \inf_{a \in \mathbb{R}}
\frac{\big\| u^{\langle \frac{p}{2} \rangle}
- \omega_{a}^{\langle \frac{p}{2} \rangle} \big\|_{L^{q, \infty}(\mathbb{R}^{N})}}
{\| u \|^{\frac{p}{2}}_{L^{p^{*}_{s}, p}(\mathbb{R}^{N})}},
\end{equation}
where we use the notation $a^{\langle b \rangle} := |a|^{b}\,\mathrm{sgn}(a)$.

With this definition at hand, we are now in a position to state the corresponding quantitative stability result in the regime $1 < p < 2$.

\begin{theorem}[Quantitative stability for $1<p<2$]\label{Theorem 2}
Let $N \geq 1$, $1<p<2$, and $s \in (0,1)$ be such that $sp<N$. Then there exists a constant $C = C(N,s,p)>0$ with the following property: for every $u \in \dot W^{s,p}(\mathbb{R}^{N})$, the fractional Hardy deficit satisfies the estimate
\begin{equation}\label{Stability ineq: p<2}
\delta_{s,p}(u)
\;\geq\;
C \,\big(\mathcal{D}_{s,p}(u)\big)^{4}
\int_{\mathbb{R}^{N}} \frac{|u(x)|^{p}}{|x|^{sp}} \, dx.
\end{equation}
\end{theorem}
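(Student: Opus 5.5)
The plan is to follow the ground state substitution strategy, as suggested by the definition of $\mathcal{D}_{s,p}$. Write $\omega(x)=|x|^{-\frac{N-sp}{p}}$ and, for $u\in C_c^\infty(\mathbb{R}^N)$ (density will handle the general case), set $u=\omega v$. The remainder version of the fractional Hardy inequality for $1<p<2$ due to Dyda and Kijaczko (the extension of Frank--Seiringer's ground state representation) gives
\[
\delta_{s,p}(u)\;\gtrsim\;\int_{\mathbb{R}^N}\int_{\mathbb{R}^N}\frac{\bigl|v(x)^{\langle p/2\rangle}-v(y)^{\langle p/2\rangle}\bigr|^2}{|x-y|^{N+sp}}\,\omega(x)^{p/2}\omega(y)^{p/2}\,dx\,dy .
\]
This is a weighted Gagliardo energy of order $\sigma=sp/2$ and exponent $2$ for $w:=v^{\langle p/2\rangle}$. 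Note that $\omega^{p/2}=|x|^{-\frac{N-2\sigma}{2}}$ is exactly the ground state for the quadratic fractional Hardy problem at order $\sigma$, and $q=\frac{2N}{N-sp}=2^*_\sigma$. Moreover $u^{\langle p/2\rangle}-\omega_a^{\langle p/2\rangle}=\omega^{p/2}(w-a^{\langle p/2\rangle})$. So the numerator of $\mathcal{D}_{s,p}$ is $\|\omega^{p/2}(w-c)\|_{L^{q,\infty}}$ with $c=a^{\langle p/2\rangle}$ arbitrary. The problem is thus reduced to a $p=2$-type question for $w$.

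Next I will prove a localized weighted Poincar\'e--Sobolev estimate. Work on dyadic annuli $A_k=\{2^k<|x|<2^{k+1}\}$, or more conveniently on Emden--Fowler-type rescalings to a fixed annulus, where the weights $\omega^{p/2}(x)\omega^{p/2}(y)$ are comparable to constants for $x,y$ in neighbouring annuli. The fractional Poincar\'e--Sobolev inequality on the fixed annulus, rescaled, bounds $\|\omega^{p/2}(w-w_{A_k})\|_{L^{q}(A_k)}^2$ by the weighted energy restricted to $A_k\times A_k$. Pairs of adjacent annuli control the jumps $|w_{A_k}-w_{A_{k+1}}|$. Summing the jumps telescopically, with a Cauchy--Schwarz step weighted by the geometric factors, should produce a constant $c$ with
\[
\|\omega^{p/2}(w-c)\|_{L^{q,\infty}}^2\lesssim \Bigl(\text{weighted energy}\Bigr)^{1/2}\,\Bigl(\int \frac{|u|^p}{|x|^{sp}}\Bigr)^{1/2},
\]
i.e.\ an interpolation that loses a square root. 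This square-root loss is what produces the exponent $4$ rather than $2$.

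To finish, I will use Lorentz embeddings to pass from $\|u\|_{L^{p_s^*,p}}^{p}$ to $\int|u|^p|x|^{-sp}$. Hardy's inequality in Lorentz form gives $\int |u|^p|x|^{-sp}\lesssim\|u\|_{L^{p^*_s,p}}^p$, with a converse bound along near-extremal normalizations; alternatively I will simply normalize $\int|u|^p|x|^{-sp}=1$ and use the trivial bound $\mathcal{D}_{s,p}\lesssim 1$ when $\delta_{s,p}(u)$ is large. Combining the pieces gives $\mathcal{D}_{s,p}(u)^4\lesssim \delta_{s,p}(u)/\int|u|^p|x|^{-sp}$.

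The main obstacle is the telescoping step. The averages $w_{A_k}$ need not converge, and summing infinitely many annular errors in $L^{q,\infty}$ must avoid a logarithmic divergence. I would first try to control $\sup_k|w_{A_k}-c|\,\|\omega^{p/2}\|_{L^{q,\infty}(\cup_{j\ge k}A_j)}$-type quantities. Since the weak norm of $\omega^{p/2}\chi_{A_k}$ is uniformly bounded and weak-$L^q$ is insensitive to a sum of functions living on disjoint annuli with bounded sup of pieces (via the distribution function), only a supremum over $k$, not a sum, is needed. The supremum of the partial sums of jumps is then bounded by Cauchy--Schwarz against the Hardy term, which is where the square root appears.
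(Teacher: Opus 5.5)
Your argument is correct in substance. It follows a genuinely different route from the paper's, and the difference matters.

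\textbf{The paper's route.} For $u\ge0$ with $\|u\|_{L^{p^*_s,p}}=1$, the paper compares $u^{p/2}$ with the vanishing multiple $\varepsilon_W(u)^{\gamma_W}\omega^{p/2}$ and splits $\mathbb{R}^N$ into the superlevel set $A_W$ and its complement. On $A_W$ it bounds the weak norm by the \emph{strong} $L^q(A_W)$ norm. For this it uses annular Poincar\'e--Sobolev, a telescoping of averages from a base annulus $A_{j_0}$, and Chebyshev to absorb the mean terms. Signs are handled by splitting into $u_\pm$.

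\textbf{Your route.} You never pass to a strong norm over many annuli. The averages enter only through $\sup_k|w_{A_k}|$, via the pointwise bound by $\sup_k|w_{A_k}|\,\omega^{p/2}$. The oscillations enter through an $\ell^q$-sum, which is at most $E^{1/2}$. The supremum comes from an Agmon-type interpolation between the energy $E$ and $H=\int|u|^p|x|^{-sp}dx=\int w^2|x|^{-N}dx$.

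\textbf{Why the difference matters.} This is exactly where the paper's argument breaks. It bounds $|(v^{p/2})_{A_k}-(v^{p/2})_{A_{j_0}}|^q$ by $C\sum_{\ell=j_0}^{k-1}|(v^{p/2})_{A_\ell}-(v^{p/2})_{A_{\ell+1}}|^q$ with $C$ independent of $k-j_0$. That is not valid: the constant is $(k-j_0)^{q-1}$. The resulting strong estimate fails for truncated ground states $u=h\,\omega\,\phi(\log|x|/L)$ with $h\sim L^{-1/p}$. There $\varepsilon_W\sim L^{-2}$, while $\int_{A_W}|v^{p/2}-\varepsilon_W^{\gamma_W}|^q\,dx/|x|^N\sim L^{1-q/2}\gg\varepsilon_W^{q/4}$. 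In the same family $\mathcal{D}_{s,p}(u)^4H\sim L^{-2}$, so the weak norm has to be exploited the way you do. Both routes in effect compare with $a\approx0$: they show that $\|u\|_{L^{p^*_s,\infty}}/\|u\|_{L^{p^*_s,p}}$ is small.

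\textbf{Points to tighten.}

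First, the Dyda--Kijaczko remainder carries the weight $W(x,y)=\min\{\omega(x),\omega(y)\}\max\{\omega(x),\omega(y)\}^{p-1}\le\omega(x)^{p/2}\omega(y)^{p/2}$. So the global lower bound you wrote is stronger than what is known. Nothing changes, because you only use pairs in the same or adjacent annuli, where the two weights are comparable.

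Second, make the telescoping explicit and quadratic, with $c=0$ (for $c\ne0$ the sum $\sum_j(w_{A_j}-c)^2$ diverges). Since $w_{A_j}\to0$ as $j\to-\infty$ for $u\in C^1_c$,
\[
w_{A_k}^2=\sum_{j<k}\bigl(w_{A_{j+1}}-w_{A_j}\bigr)\bigl(w_{A_{j+1}}+w_{A_j}\bigr)\le\Bigl(\sum_j|w_{A_{j+1}}-w_{A_j}|^2\Bigr)^{1/2}\Bigl(\sum_j(w_{A_{j+1}}+w_{A_j})^2\Bigr)^{1/2}\lesssim E^{1/2}H^{1/2},
\]
using $\sum_j w_{A_j}^2\le\sum_j\fint_{A_j}w^2\,dx\lesssim H$. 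A linear Cauchy--Schwarz on partial sums of jumps would bring back the logarithmic loss. This step works for sign-changing $w$, so no $u_\pm$ splitting is needed.

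Third, your displayed interpolation inequality is false as stated. A bump of radius $r$ inside $A_0$ gives left side $\sim r^{N-sp}$ and right side $\sim r^{N-sp/2}$. What the argument actually yields is $\|u^{\langle p/2\rangle}\|^2_{L^{q,\infty}}\lesssim(EH)^{1/2}+E$. The extra $E$ is harmless: when $E\gtrsim H$, the trivial bound $\mathcal{D}_{s,p}\lesssim1$ already suffices.

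Finally, no converse Lorentz bound is needed. The chain $\mathcal{D}_{s,p}^4H\le\|u^{\langle p/2\rangle}\|^4_{L^{q,\infty}}H/\|u\|^{2p}_{L^{p^*_s,p}}\lesssim EH^2/\|u\|^{2p}_{L^{p^*_s,p}}\lesssim E$ uses only $H\lesssim\|u\|^p_{L^{p^*_s,p}}$.
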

\medskip

\begin{rem}
Theorems \ref{Theorem 1} and \ref{Theorem 2} provide quantitative forms of the fractional Hardy inequality. In particular, \eqref{Stability ineq: p>2} and \eqref{Stability ineq: p<2} show that the deficit not only remains nonnegative, but also controls the distance of $u$ from the family of virtual extremals $\{\omega_a\}_{a \in \mathbb{R}\setminus\{0\}}$. Consequently, any near-extremizing sequence for \eqref{Fractional Hardy} converges, in a Lorentz-type sense, to this family.
\end{rem}


We now turn to the local case and show that the above approach can be adapted to recover the stability of classical (local) Hardy inequality. In fact, our approach provides an alternative proof of the stability results due to Cianchi--Ferrone \cite{Cianchi2008}, with a refinement in the quantitative estimate. More precisely, for $p \geq 2$, we obtain an improved stability exponent in the associated distance functional, yielding a sharper control of the deficit in terms of the distance to the family of extremals. Our approach is entirely rearrangement-free and readily extends to more general settings where scaling invariance plays a fundamental role. This further highlights the flexibility of the method and its ability to unify the fractional and local frameworks within a common quantitative stability. For the local case $s=1$, we define the (classical) Hardy deficit for $u \in C_c^{\infty}(\mathbb{R}^N)$ as
\begin{equation}
\delta_{p}(u)
:=
\int_{\mathbb{R}^{N}} |\nabla u(x)|^{p} \, dx
- \mathcal{C}_{N,p} \int_{\mathbb{R}^{N}} \frac{|u(x)|^{p}}{|x|^{p}} \, dx,
\end{equation}
where $\mathcal{C}_{N,p} = \left(\frac{N-p}{p}\right)^p$ is the optimal Hardy constant. In particular, next theorem recovers the stability estimate of Cianchi--Ferrone \cite[Theorem~1.1]{Cianchi2008} with an improved stability exponent. 

\medskip 

\begin{theorem}[Improved stability exponent]\label{Theorem: Improvement of stability exponent C and F result}
Let $N \geq 2$ and $1 < p < N$. Then there exists a constant $C = C(N,p)>0$ such that, for every real-valued weakly differentiable function $u$ in $\mathbb{R}^N$ decaying to zero at infinity and such that $|\nabla u| \in L^{p}(\mathbb{R}^N)$, the Hardy deficit satisfies
\begin{equation}
    \delta_{p}(u)
\;\geq\;
C\, \big(d_{p}(u)\big)^{\alpha}
\int_{\mathbb{R}^{N}} \frac{|u(x)|^{p}}{|x|^{p}} \, dx,
\end{equation}
where $\alpha = \emph{max}\{ 4, 2 p\}.$ 
\end{theorem}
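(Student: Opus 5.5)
The plan is to run, for $s=1$, the rearrangement-free scheme the paper uses for Theorems~\ref{Theorem 1} and~\ref{Theorem 2}: a ground state representation, then a scale-invariant Poincar\'e--Sobolev argument on dyadic annuli, then Lorentz embeddings. By density and homogeneity I take $u\in C_c^\infty(\mathbb{R}^N)$ with $\int|u|^p|x|^{-p}\,dx=1$. Put $\gamma=\frac{N-p}{p}$ and write $u=|x|^{-\gamma}v$. The ground state representation with remainder gives the lower bounds on the deficit. For $p\ge2$,
\[
\delta_p(u)\ \gtrsim\ \int_{\mathbb{R}^N}|x|^{-\gamma p}|\nabla v|^p\,dx .
\]
For $1<p<2$,
\[
\delta_p(u)\ \gtrsim\ \int_{\mathbb{R}^N}|x|^{-\gamma p}\,|\nabla v|^2\,\bigl(|x|^{-\gamma}|v|+|x|^{1-\gamma}|\nabla v|\bigr)^{p-2}\,dx .
\]
In Emden--Fowler variables $t=\log|x|$, $\theta\in\mathbb{S}^{N-1}$, the first bound reads $\delta_p\gtrsim\int_{\mathbb{R}\times\mathbb{S}^{N-1}}|\nabla_{t,\theta}w|^p$ with $w(t,\theta)=v(e^t\theta)$. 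The Hardy term is $\int|w|^p=1$, and this weight is exactly scale invariant.

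Next I localize. Cut $\mathbb{R}^N$ into annuli $A_k=\{2^k<|x|<2^{k+1}\}$, equivalently unit cylinders $[k,k+1]\times\mathbb{S}^{N-1}$. On each piece the Poincar\'e--Sobolev inequality (a scaled Poincar\'e inequality in $x$ with weight $|x|^{-\gamma p}$) gives
\[
\int_{A_k}|x|^{-\gamma p}|v-\bar v_k|^p\ \lesssim\ \int_{A_k}|x|^{-\gamma p}|x|^p|\nabla v|^p .
\]
Here $\bar v_k$ is the mean of $v$ on $A_k$. A one-dimensional telescoping argument in $t$ controls $|\bar v_k-\bar v_j|$ by $\int|\partial_t w|$ over the intervening cylinders. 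The naive bound is $\|\bar v_k-\bar v_j\|\le|k-j|^{1-1/p}\delta^{1/p}$, which grows with $|k-j|$. I would instead fix a level $a$ equal to the mean at a selected annulus where $|v|$ carries mass. Then I show that $\|u-a|x|^{-\gamma}\|_{L^{p^*,\infty}}$ is bounded by $\delta^{1/p}$ times the reciprocal of the normalized Hardy mass, using the weak-norm characterization $\sup_k 2^{k\gamma}\|u-v_a\|_{L^\infty\text{-ish on }A_k}$. The Lorentz embedding $L^{p^*,p}\hookrightarrow L^{p^*,\infty}$ and the Hardy-type equivalence $\|u\|_{L^{p^*,p}}^p\approx\int|u|^p|x|^{-p}$ on the relevant class then normalize the distance.

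The key interpolation is a dichotomy. If $v$ is spread out in $t$ (far from a constant), the Hardy normalization together with $\int|\partial_tw|^p\le\delta$ forces a lower bound on $\delta$ in terms of $d_p$. If $v$ is nearly constant, the Poincar\'e step gives $d_p^p\lesssim\delta^{1/2}$-type control. Balancing the two regimes should give $\delta\gtrsim d_p^{2p}$ for $p\ge2$, with the square coming from optimizing the telescoping length against the Poincar\'e error. For $1<p<2$ I would use the weighted $|\nabla v|^2(\cdots)^{p-2}$ remainder, pass to $v^{\langle p/2\rangle}$ via the pointwise inequality $|\nabla (v^{\langle p/2\rangle})|^2\lesssim|\nabla v|^2|v|^{p-2}$, and run the same argument with exponent $2$ and $q=\frac{2N}{N-p}$. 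This gives $\delta\gtrsim\mathcal{D}^4$. To return to $d_p$, note that $|a^{\langle p/2\rangle}-b^{\langle p/2\rangle}|\gtrsim|a-b|^{p/2}$ fails for $p<2$, so I would use a Hölder splitting in the form $|a-b|\lesssim|a^{\langle p/2\rangle}-b^{\langle p/2\rangle}|\,(|a|+|b|)^{1-p/2}$. This should give $d_p\lesssim\mathcal{D}$ after normalization, hence exponent $4$.

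The main obstacle I expect is the nonlocal-in-$t$ control in the middle step. The deficit controls $\partial_tw$ only in $L^p$ over an infinite cylinder, while the weak-type norm needs a uniform-in-$k$ bound on deviations from the single constant $a$. The Hardy normalization has to supply the compactness that rules out slow drift. I would first try choosing $a$ as the mean at the annulus maximizing the local Hardy mass, and bounding the drift on both sides using $\sum_k\int_{A_k}|u|^p|x|^{-p}=1$.
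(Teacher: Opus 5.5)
The decisive step of your plan, the ``dichotomy'' whose balancing ``should give'' $\delta\gtrsim d_p^{2p}$, is only asserted. With the inputs you allow yourself it cannot hold for $p>2$.

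For $p\ge2$, the only information you take from the deficit is $\int|\nabla_{t,\theta}w|^p\lesssim\delta$ together with $\int|w|^p=1$. Carry out your balancing honestly. An oscillation of size $m$ built up over $n$ unit cylinders costs $m\lesssim n^{1-1/p}\delta^{1/p}$, while the Hardy normalization forces $nm^p\lesssim1$. Optimizing in $n$ gives only $m\lesssim\delta^{1/p^2}$, i.e.\ $\delta\gtrsim d_p^{p^2}$.

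This is sharp for that input. Take $w(t,\theta)=L^{-1/p}\phi(t/L)$ with a fixed nonnegative bump $\phi$. Then $\int|w|^p\approx1$, $\int|\partial_tw|^p\approx L^{-p}$, and $d_p\approx L^{-1/p}$. The last holds because the $L^{p^*,\infty}$ quasi-norm of $|x|^{-\gamma}g(\log|x|)$ is at most $C\sup|g|$, and at least $c\,m$ if $|g|\ge m$ on some unit $t$-interval. Hence $d_p^{2p}\approx L^{-2}\gg L^{-p}$.

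The same example refutes your intermediate target $\|u-a|x|^{-\gamma}\|_{L^{p^*,\infty}}\lesssim\delta^{1/p}$: the left side is $\gtrsim L^{-1/p}$ for every $a$, while $\delta^{1/p}\approx L^{-2/p}$. The true deficit of this $u$ is $\approx L^{-2}$. It comes from the quadratic term that the $p$-power remainder of Remark~\ref{Remark: local Hardy remainder for p>2} throws away.

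The missing ingredient is the full convexity bound $|X+Y|^p-|X|^p-p|X|^{p-2}X\cdot Y\ge c_p(|X|+|Y|)^{p-2}|Y|^2$, valid for every $p>1$. Once the cross term vanishes, it gives
\[
\delta_p(u)\gtrsim\int_{\mathbb{R}\times\mathbb{S}^{N-1}}\bigl(\gamma|w|+|\nabla_{t,\theta}w|\bigr)^{p-2}|\nabla_{t,\theta}w|^2\,dt\,d\theta .
\]
For radial $u$ and $p\ge2$ this already yields the exponent $2p$. Indeed $\sup|w|^p\le p\int|w|^{p-1}|\partial_tw|\,dt\le p\,(\int|w|^p)^{1/2}(\int|w|^{p-2}|\partial_tw|^2)^{1/2}\lesssim\delta^{1/2}$, and $d_p\lesssim\sup|w|$.

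Note also that for compactly supported $u$ one has $\inf_a\|u-a|x|^{-\gamma}\|_{L^{p^*,\infty}}\approx\|u\|_{L^{p^*,\infty}}$, since $u-a|x|^{-\gamma}=-a|x|^{-\gamma}$ near infinity. So choosing $a$ at a maximal-mass annulus is beside the point. What must be shown is that near-extremals are uniformly small; this is also why the paper compares $u$ with $\delta^{\eta}|x|^{-\gamma}$, where $\delta^\eta\to0$.

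Two further steps fail as written, and the paper does not supply the missing step.

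\emph{The $L^\infty$ control on annuli.} The quantity $\sup_k2^{k\gamma}\|u-a|x|^{-\gamma}\|_{L^\infty(A_k)}$ only dominates the weak quasi-norm. For nonradial $u$ the deficit gives no $L^\infty$ control on annuli, since $W^{1,p}$ of the $N$-dimensional cylinder does not embed in $L^\infty$ when $p<N$. You must work with $L^{p^*}$ norms on annuli, as the paper does through Lemma~\ref{Lemma: Local Sobolev inequality}, and pass to $L^{p^*,\infty}$ only at the end.

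\emph{The passage to $w^{\langle p/2\rangle}$ for $1<p<2$.} This runs the wrong way. Since $p-2<0$, $(\gamma|w|+|\nabla w|)^{p-2}|\nabla w|^2\le(\gamma|w|)^{p-2}|\nabla w|^2\approx|\nabla(w^{\langle p/2\rangle})|^2$. So the remainder does not control $\int|\nabla(w^{\langle p/2\rangle})|^2$ where $|\nabla w|\gg|w|$. The paper avoids this by H\"older and the normalization (Lemma~\ref{Lemma: Local Hardy Remainder 1<p<2}), obtaining $\int|\nabla v|^p|x|^{p-N}\lesssim\delta^{p/2}$; that is where the exponent $4$ comes from. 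Your return from $\mathcal{D}$ to $d_p$ via weak-type H\"older is fine.

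\emph{The paper's treatment of the drift.} The paper controls it by $|(v)_{A_k}-(v)_{A_{n_0}}|^{p^*}\le2^{p^*-1}\sum_{\ell=n_0}^{k-1}|(v)_{A_\ell}-(v)_{A_{\ell+1}}|^{p^*}$, with a constant independent of $k-n_0$. That inequality is false: take equal increments. The example above violates the bound derived from it, $\int_{\mathcal G}|v-\delta^\eta|^{p^*}|x|^{-N}\,dx\lesssim\delta^{p^*\beta/(2p)}$ with $\beta=1$, by a factor of order $L$. The paper, like you, uses only the $p$-power remainder when $p\ge2$, so its route cannot reach $2p$ for $p>2$ either. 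The obstacle you identified is genuine and still open in both arguments.
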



\begin{rem}
Obtaining a stability exponent of this strength is challenging. For $p \geq 2$, we improve the exponent of the distance function to $2p$, whereas in the result of Cianchi--Ferone \cite{Cianchi2008} it is $2p^{*}$, yielding a strictly smaller and hence sharper exponent. For $1 < p < 2$, we obtain the exponent $4$, which is notably independent of both $N$ and $p$, in contrast to the Cianchi--Ferone exponent $2p^{*}$, which depends on these parameters. Moreover, since $4 < 2p^{*}$ for $p > \frac{2N}{N+2}$, this also gives a strict improvement for $1<p<2$ in that range; it seems that improvement via rearrangement techniques is difficult, and achieving sharper exponents requires new approaches and technical innovations. It is interesting to note that such phenomenon on the stability exponents were first observed by Figalli-Zhang in \cite{Figalli-Zhang} for the $p-$ Sobolev inequality. Moreover, they proved that corresponding exponent is sharp. 

\end{rem}

\medskip

\subsection{Strategy of the proof and main Novelty} 

We briefly outline the strategy of our approach, together with the main technical novelties:

\begin{itemize}

\item As mentioned earlier, for the classical (local) Hardy inequality, quantitative stability was established by Cianchi and Ferone \cite{Cianchi2008} via rearrangement and symmetrization arguments.

\item Such techniques do not readily extend to the nonlocal setting, primarily due to the presence of the Gagliardo seminorm, which involves a genuinely nonlocal double integral and does not admit a straightforward one-dimensional reduction.

\item To overcome this difficulty, we adopt a different strategy based on a geometric decomposition of the domain, partitioning $\mathbb{R}^N$ into a family of concentric annuli.

\item On each annulus, we employ suitable Poincar\'e-type inequalities, carefully tracking their scaling behavior with respect to the size of the annulus.

\item A key ingredient is the precise control of the Gagliardo seminorm across different annuli, which requires a refined localization argument.

\item Finally, through a delicate summation procedure, we combine the local estimates while ensuring that the constants remain uniform and do not depends on the domain and the scaling parameter. Finally applying the Lorentz-Embedding we obtain the desired stability inequality. 

\end{itemize}

This approach provides a robust framework for handling the nonlocal nature of the problem and represents a significant departure from classical symmetrization-based methods.

\medskip 

Our next objective is to further investigate the structure of the fractional Hardy deficit. A natural question is whether it admits a direct comparison with its local counterpart, at least at the level of quantitative estimates. Remarkably, for $p=2$, we show that the two deficits are in fact equivalent under a transformation $T$ constructed based on Emden--Fowler transformation. As a consequence, the fractional Hardy deficit can be analyzed through its local analogue on the cylinder, allowing us to transfer stability properties. In particular, the quantitative estimates established earlier can be reinterpreted in this framework, yielding stability results on $\mathbb{R}\times \mathbb{S}^{N-1}$ via the diagonalization of the associated quadratic form (see \cite{FrankLieb2008}).

\subsection{Equivalence of Local and Nonlocal Hardy Inequalities via the Emden--Fowler Transform}

The classical local  and fractional Hardy inequalities play a fundamental role in analysis, reflecting the behavior of local and nonlocal operators, respectively. Although they arise in different settings, it is natural to seek a unified framework that connects them.
The core of our approach is a transformation $T$ that links the fractional ($0<s<1$) and local ($s=1$) Hardy regimes. It combines the Emden--Fowler transform with a pseudo-differential multiplier $M_s$ on the cylinder. The symbol of $M_s$, defined mode-by-mode via the Fourier--spherical harmonic decomposition, is chosen to exactly captures the gap between the fractional and local Hardy energies. Thus, $T$ acts as a geometric--spectral transport map, converting the nonlocal structure into a local one on a conformally equivalent cylinder, and should be viewed as an intertwining operator between the two Hardy geometries. Moreover, in \cite{WAM}, Emden-Fowler transformation is used to reformulate fractional Caffarelli-Kohn-Nirenberg (CKN) inequality in cylindrical variables.

\medskip 

We shall briefly explain the geometric interpretation behind our transformation 
$T.$ Our method lies in the conformal equivalence between the punctured Euclidean space $\mathbb{R}^N \setminus \{0\}$ and the infinite Emden-Fowler cylinder $\mathcal{C} = \mathbb{R} \times \mathbb{S}^{N-1}$.

The fractional Hardy inequality is invariant under the continuous group of Euclidean dilations: $D_\lambda u(x) = u(\lambda x)$ for $\lambda > 0$. However, on $\mathbb{R}^N$, dilations act multiplicatively, which complicates spectral analysis. By moving to logarithmic polar coordinates, we define $x = e^t w$, where $t = \ln|x| \in \mathbb{R}$ acts as the axial coordinate and $w = \frac{x}{|x|} \in \mathbb{S}^{N-1}$ is the angular variable. Under this diffeomorphism, the multiplicative Euclidean scaling $x \mapsto \lambda x$ is mapped to a simple, additive translation along the axis of the cylinder: $t \mapsto t + \ln \lambda$.

Because translation invariant operators on a cylinder can be globally diagonalized using the Fourier transform along the axis ($\mathbb{R}$) and a spherical harmonic decomposition over the cross-section ($\mathbb{S}^{N-1}$), the cylinder provides the perfect geometric theater to analyze scale-invariant operators.

To transition functions from $\mathbb{R}^N$ to $\mathcal{C}$ while preserving the natural measure of the Hardy inequalities, we define a family of weighted, isometric lifting maps. For a weight parameter $\alpha \in \mathbb{R}$, let $L^2(\mathbb{R}^N; |x|^{-2\alpha}dx)$ be the weighted Lebesgue space. We define the conformal map $\Phi_\alpha: L^2(\mathbb{R}^N; |x|^{-2\alpha}dx) \to L^2(\mathcal{C})$ by:$$(\Phi_\alpha u)(t, w) := e^{\frac{N-2\alpha}{2}t} u(e^t w).$$The exponential weight $e^{\frac{N-2\alpha}{2}t}$ is geometrically precise: it absorbs the volume measure $|x|^{-2\alpha} dx = e^{-2\alpha t} e^{Nt} e^{-t} dt \, d\omega = e^{(N-2\alpha)t} dt \, d\omega$, rendering $\Phi_\alpha$ an isometry. 
To this end we shall now define the transformation $T$ taking into account the spectral and geometric properties of the transformation $\Phi_{\alpha}.$
\medskip 

Let $\mathcal{S}(\mathbb{R}^N)$ denote the Schwartz class of rapidly decaying smooth functions on $\mathbb{R}^N$.  
We construct a new transformation
\[
T : \mathcal{S}(\mathbb{R}^N)\to L^2_{\mathrm{loc}}(\mathbb{R}^N\setminus\{0\}),
\]
which plays a central role in our analysis. The main feature of this transformation is that it provides a geometric correspondence between the fractional Hardy deficit and the classical Hardy deficit. In particular, under this map, the fractional Hardy structure is converted into its local counterpart in a precise way.

The transformation is defined by
\begin{equation}\label{Defn: The operator T}
T[u](x)
:=
|x|^{-\frac{N-2}{2}}
\left(
M_s\!\left[
|\cdot|^{\frac{N-2s}{2}}u(\cdot)
\right]
\right)
\left(
\ln|x|,
\frac{x}{|x|}
\right),
\end{equation}
where 
$M_s$ 
is the pseudo-differential operator defined in \eqref{defM}.

The novelty of the transformation lies in the fact that it combines the Emden--Fowler type change of variables with the nonlocal operator $M_s$, thereby creating an explicit bridge between nonlocal and local Hardy-type structures. This allows us to transfer information between the fractional and classical settings in an effective and geometrically transparent manner. In fact, the transformation $T$ admits a natural decomposition into composition of three mapping  (see Theorem~\ref{properties of T}), namely
\[
T = \Phi_1^{-1} \circ M_s \circ \Phi_s.
\]
This decomposition highlights the geometric structure underlying the transformation which we now briefly discuss.

\begin{itemize}
    \item[Step 1.]  \textbf{Fractional lift ($\Phi_s$):} The function $u$ is lifted to the cylinder with weight $\alpha=s$, yielding $\Phi_s u \in L^2(\mathcal{C})$. Geometrically, this isolates the dilation symmetries of the fractional problem into pure axial translations on $\mathcal{C}$.
    \item[Step 2.]  {\bf Spectral Modulation ($M_s$):} On the cylinder, the fractional deficit becomes a pseudo-differential operator, whose action is strictly local in the momentum space. We apply a multiplier $M_s$ which operates on the dual space of the cylinder. Let $\mu_{\ell} = \ell (\ell+N-2)$ be the eigenvalues of the Laplacian-Beltrami operator $- \Delta_{\mathbb{S}^{N-1}}$. Taking the Fourier transform in $t$ (dual variable $\xi$) and projecting onto spherical harmonics (index $\ell$), $M_s$ acts as:
    \begin{equation}\label{defM}
      ( \widehat{M_{s} \phi}) (\xi, \ell, m) = \sqrt{ \frac{P_{s}(\xi, \ell)}{\xi^{2}+ \mu_{\ell}}} \widehat{\phi} (\xi, \ell, m).  
    \end{equation}
    
The numerator $P_s(\xi, \ell)$ is the exact spectral symbol of the fractional Hardy operator on the cylinder, derived from the Mellin transform of the fractional Laplacian:
\begin{equation}\label{PS}
   P_{s} (\xi, \ell) := 2^{2s} \left| \frac{\Gamma \left( \frac{N+2s +2\ell+ 2i \xi}{4} \right)}{\Gamma \left( \frac{N-2s+2 \ell+2i \xi}{4} \right)} \right|^{2} - C_{N,s}, 
\end{equation}
where $C_{N,s}$ is given by \eqref{Defn: C_N,s}. The denominator $\xi^{2}+ \mu_{\ell}$ is the exact symbol of the local ($s=1$) classical Hardy operator on the cylinder. Therefore, geometrically, $M_s$ acts as a ``spectral lens". It scales the amplitude of every continuous frequency mode $\xi$ and angular mode $\ell$ of the fractional function precisely into the spectral signature of a classical function. For the details on the symbols on cylinders, we refer to \cite[Section~5]{gonzalez2016recent}.
    \item[Step 3.] \textbf{Classical projection ($\Phi_1^{-1}$):} After applying $M_s$, we map back (to the Euclidean space) via $\Phi_1^{-1}$ (not $\Phi_s^{-1}$), thereby placing the function into the classical weighted Sobolev space.
\end{itemize}

Below, we describe our correspondence theorem formally.


\begin{theorem}\label{Theorem: Equivalence bw local and nonlocal via Emden Trans}
Let $N \geq 3.$ The following properties hold:
\begin{itemize}
    \item[(i)] \textbf{Preservation of the deficit.} 
    For every $u \in \mathcal{S}(\mathbb{R}^N),$ the transformation $T$ preserves the Hardy deficit, namely
    \[
        \delta_{2}(T[u]) = \delta_{s,2}(u).
    \]

    \item[(ii)] \textbf{Action on extremizers.} 
    Let $\omega_{s}(x) = |x|^{-\frac{N-2s}{2}}$ denote the fractional Hardy extremizer and $\omega_{1}(x) = |x|^{-\frac{N-2}{2}}$ the classical Hardy extremizer. Then $T$ extends (in the natural weighted sense) so that
    \[
        T[\omega_{s}] = K_{N,s} \, \omega_{1},
    \]
    where the constant $K_{N,s}$ is given by
    \begin{equation}\label{Defn: K_N,s}
        K_{N,s} 
        = \frac{1}{2} 
        \sqrt{ \psi'\!\left( \frac{N-2s}{4} \right) - \psi'\!\left( \frac{N+2s}{4} \right)} 
        \, \sqrt{C_{N,s}},
    \end{equation}
    and $\psi'$ denotes the trigamma function defined in \eqref{Trigamma function}.
\end{itemize}
\end{theorem}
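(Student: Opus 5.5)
Both parts reduce to computations on the cylinder $\mathcal{C}=\mathbb{R}\times\mathbb{S}^{N-1}$ via the isometric lifts $\Phi_\alpha$ and the decomposition $T=\Phi_1^{-1}\circ M_s\circ\Phi_s$, which is immediate from \eqref{Defn: The operator T}: with $x=e^t w$ one has $|x|^{\frac{N-2s}{2}}u(x)=e^{-st}\,e^{\frac{N}{2}t}u(e^tw)\cdot$(correct weight), and the prefactor $|x|^{-\frac{N-2}{2}}$ is exactly the inverse of the $\Phi_1$ weight $e^{\frac{N-2}{2}t}$.

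\textbf{Part (i).} The first step is to diagonalize both deficits on the cylinder. For the local deficit with $p=2$, write $v=\Phi_1^{-1}\phi$, i.e. $v(x)=|x|^{-\frac{N-2}{2}}\phi(\ln|x|,x/|x|)$. Polar coordinates and one integration by parts in $t$ remove the cross term, and the classical computation gives
\[
\int|\nabla v|^2-\Big(\tfrac{N-2}{2}\Big)^2\int\frac{|v|^2}{|x|^2}=\int_{\mathcal{C}}\big(|\partial_t\phi|^2+|\nabla_{\mathbb{S}^{N-1}}\phi|^2\big)\,dt\,dw .
\]
By Plancherel in $t$ and expansion in spherical harmonics, this equals $\sum_{\ell,m}\int_{\mathbb{R}}(\xi^2+\mu_\ell)|\widehat\phi(\xi,\ell,m)|^2\,d\xi$.

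For the fractional deficit in the Fourier form \eqref{Fourier definition}, I would invoke the ground-state/Mellin diagonalization of Frank--Lieb--Seiringer \cite{FrankLieb2008}: for $\phi=\Phi_s u$,
\[
\delta_{s,2}(u)=\sum_{\ell,m}\int_{\mathbb{R}}P_s(\xi,\ell)\,|\widehat{\phi}(\xi,\ell,m)|^2\,d\xi ,
\]
with $P_s$ as in \eqref{PS}. This comes from the fact that $(-\Delta)^s$ maps $|x|^{-\frac{N-2s}{2}+i\xi}Y_\ell$ to the Gamma ratio times $|x|^{-\frac{N+2s}{2}+i\xi}Y_\ell$. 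For Schwartz $u$ it is legitimate because $\Phi_s u\in L^2(\mathcal{C})$, and $P_s\ge0$ by the sharpness of $C_{N,s}$: the minimum sits at $\xi=0$, $\ell=0$, by log-convexity/monotonicity of the Gamma ratio.

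Combining the two, $\widehat{M_s\phi}$ has squared modulus $\frac{P_s}{\xi^2+\mu_\ell}|\widehat\phi|^2$, so $\delta_2(T[u])=\delta_{s,2}(u)$. Two technical points need checking. First, $M_s\phi$ must lie in the form domain of the cylinder Laplacian, which holds because $P_s$ has polynomial growth $\sim(\xi^2+\ell^2)^s$. Second, one must justify applying $\delta_2$ to $T[u]$, which need not be compactly supported; I would interpret $\delta_2$ by density/closure of the quadratic form. The $\ell=0$, $\xi\to0$ singularity of $1/(\xi^2+\mu_\ell)$ is harmless, since $P_s(\xi,0)=O(\xi^2)$ (even and smooth in $\xi$, vanishing at $0$). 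Hence the symbol of $M_s$ is bounded near $(0,0)$.

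\textbf{Part (ii).} This is the genuinely delicate step, because $\Phi_s\omega_s\equiv1$ on $\mathcal{C}$ is not in $L^2$. Its Fourier transform is concentrated at $\xi=0$, $\ell=0$. The plan is to define $T[\omega_s]$ as a limit along truncations, or equivalently to observe that on functions constant over the cylinder the Fourier multiplier acts by its value at $(\xi,\ell)=(0,0)$. That value is
\[
\lim_{\xi\to0}\sqrt{P_s(\xi,0)/\xi^2}=\sqrt{\tfrac12\,\partial_\xi^2P_s(0,0)} .
\]
The computation goes as follows. Write $P_s(\xi,0)=C_{N,s}\big(e^{F(\xi)}-1\big)$ with
\[
F(\xi)=2\,\mathrm{Re}\Big[\log\Gamma\big(\tfrac{N+2s}{4}+\tfrac{i\xi}{2}\big)-\log\Gamma\big(\tfrac{N-2s}{4}+\tfrac{i\xi}{2}\big)\Big]-\big(\text{value at }0\big).
\]
Expanding to second order, $F(\xi)=-\frac{\xi^2}{4}\big(\psi'(\tfrac{N+2s}{4})-\psi'(\tfrac{N-2s}{4})\big)+O(\xi^4)$. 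Hence $P_s(\xi,0)\sim\frac{C_{N,s}}{4}\big(\psi'(\tfrac{N-2s}{4})-\psi'(\tfrac{N+2s}{4})\big)\xi^2$, which gives exactly $K_{N,s}$ in \eqref{Defn: K_N,s}. Then $M_s 1=K_{N,s}$, and $\Phi_1^{-1}K_{N,s}=K_{N,s}\,\omega_1$. To make the extension rigorous I would take $u_R=\omega_s\chi(\ln|x|/R)$, with the cutoff acting on the cylinder. Then $\widehat{\Phi_s u_R}$ is an approximate identity at $\xi=0$, $\ell=0$, and continuity of the symbol at the origin gives $M_s\Phi_s u_R\to K_{N,s}$ locally in $L^2$. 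I expect the main obstacle to be exactly this weighted-sense extension together with the continuity of the symbol at $(0,0)$.
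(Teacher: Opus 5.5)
Your proposal is correct and follows the paper's proof essentially step for step. In both, the local deficit is diagonalized on $\mathcal{C}$ with symbol $\xi^2+\mu_\ell$, the fractional deficit is diagonalized via the Frank--Lieb--Seiringer symbol $P_s$, the multiplier $M_s$ converts one form into the other, and $K_{N,s}$ is read off from the second-order expansion of $P_s(\xi,0)$ in trigamma values. Your truncation argument with $u_R=\omega_s\chi(\ln|x|/R)$ and continuity of the symbol at the origin is a more rigorous replacement for the paper's formal manipulation $\widehat{\phi_u}=C\,\delta(\xi)\,\delta_{\ell,0}$ in part (ii).
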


\medskip 

The transform $T$ allows us to transfer the stability of the local Hardy inequality to the cylinder via a pullback structure. To this end, we introduce the weighted norm
\[
\|u\|_{\mathcal{W}_{N,s}}
:=
\left(
\int_{\mathbb{R}^{N}} \frac{|T[u](x)|^{2}}{|x|^{2}}\,dx
\right)^{1/2},
\]
as well as the pullback Lorentz norms
\[
\|u\|_{\mathcal{L}^{q,r}_{N,s}}
:=
\|T[u]\|_{L^{q,r}(\mathbb{R}^{N})}.
\]

Recall that the local stability result measures the distance to the family of virtual extremals $v_a(x)=a|x|^{-\frac{N-2}{2}}$. Let $w_s(x)=|x|^{-\frac{N-2s}{2}}$ denote the nonlocal extremal. By construction,
\[
T[w_s]=K_{N,s}\,v_1,
\]
and, by linearity of $T$, the families of extremals are in one-to-one correspondence:
\[
v_a = T\!\left[\frac{a}{K_{N,s}}\,w_s\right].
\]

Consequently, the local distance functional transforms as
\[
d_2(T[u])
=
\inf_{\alpha \in \mathbb{R}}
\frac{\|T[u-\alpha w_s]\|_{L^{2^*,\infty}(\mathbb{R}^{N})}}
{\|T[u]\|_{L^{2^*,2}(\mathbb{R}^{N})}}.
\]
This naturally leads to the nonlocal distance, defined intrinsically in terms of $u$:
\[
\mathfrak{D}_{s,2}(u)
:=
\inf_{\alpha \in \mathbb{R}}
\frac{\|u-\alpha w_s\|_{\mathcal{L}^{2^*,\infty}_{N,s}}}
{\|u\|_{\mathcal{L}^{2^*,2}_{N,s}}}.
\]


\medskip

\begin{theorem}[Quantitative stability for $p=2$ via pullback formulation]\label{qspf}
Let $N \geq 3,$ $s \in (0,1)$ and $\delta_{s,2}(u)$ be defined as in \eqref{Fourier definition}. 
Then there exists a constant $C = C(N)>0$ such that
\begin{equation}
    \delta_{s,2}(u)
\;\geq\;
C\, \|u\|_{\mathcal{W}_{N,s}}^{2}
\big(\mathfrak{D}_{s,2}(u)\big)^{4},
\end{equation}
holds for all $u \in \dot W^{s, p}(\mathbb{R}^N).$
\end{theorem}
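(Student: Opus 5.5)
The plan is to reduce Theorem~\ref{qspf} to the local stability result, Theorem~\ref{Theorem: Improvement of stability exponent C and F result} with $p=2$, by means of the correspondence in Theorem~\ref{Theorem: Equivalence bw local and nonlocal via Emden Trans}. For $p=2$ we have $\alpha=\max\{4,2p\}=4$, so the local result reads
\[
\delta_{2}(v)\;\geq\; C(N)\,\big(d_{2}(v)\big)^{4}\int_{\mathbb{R}^{N}}\frac{|v(x)|^{2}}{|x|^{2}}\,dx .
\]
We take $v=T[u]$, first for $u\in\mathcal{S}(\mathbb{R}^N)$. By part (i) of Theorem~\ref{Theorem: Equivalence bw local and nonlocal via Emden Trans}, the left-hand side equals $\delta_{s,2}(u)$. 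By definition of $\|\cdot\|_{\mathcal{W}_{N,s}}$, the Hardy integral on the right equals $\|u\|^{2}_{\mathcal{W}_{N,s}}$.

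It remains to identify $d_2(T[u])$ with $\mathfrak{D}_{s,2}(u)$. By part (ii) and linearity, $v_a=T[(a/K_{N,s})w_s]$, and $a\mapsto a/K_{N,s}$ is a bijection of $\mathbb{R}$. Hence
\[
\inf_a\|T[u]-v_a\|_{L^{2^*,\infty}}=\inf_\alpha\|T[u-\alpha w_s]\|_{L^{2^*,\infty}}=\inf_\alpha\|u-\alpha w_s\|_{\mathcal{L}^{2^*,\infty}_{N,s}}.
\]
The denominators coincide by definition of the pullback Lorentz norm. Combining these identities gives the claimed inequality, with the constant $C(N)$ inherited from the local theorem and therefore independent of $s$.

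The main obstacle is checking that $v=T[u]$ is an admissible function in Theorem~\ref{Theorem: Improvement of stability exponent C and F result}: real-valued, weakly differentiable on $\mathbb{R}^N$, with $|\nabla v|\in L^2$ and decaying at infinity.

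\begin{itemize}
\item \emph{Real-valuedness.} The symbol $\sqrt{P_s/(\xi^2+\mu_\ell)}$ is even in $\xi$, since $P_s(-\xi,\ell)=P_s(\xi,\ell)$ by conjugation symmetry of $\Gamma$. It also depends only on $\ell$, not on $m$. So $M_s$ maps real functions to real functions.
\item \emph{Finite energy.} Through $\Phi_1$, the quantity $\int|\nabla v|^2$ is the cylinder form $\sum_{\ell,m}\int(\xi^2+\mu_\ell+(\tfrac{N-2}{2})^2)|\widehat{M_s\phi}|^2\,d\xi$. Since $P_s(\xi,\ell)$ grows like $(\xi^2+\ell^2)^{s}$ and $\Phi_s u$ has finite fractional energy, this quantity is finite.
\item \emph{Low frequencies.} The points $\xi=0,\ \ell=0$ need separate care. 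There $P_s(0,0)=0$ and $\mu_0=0$, so the ratio has a finite limit, by the same trigamma computation that produces $K_{N,s}$. Thus $M_s$ is bounded near the origin.
\end{itemize}

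Decay at infinity holds after subtracting a constant; alternatively, the local theorem can be applied to the energy-space closure, using the weak-$L^{2^*}$ control provided by the Sobolev--Lorentz embedding. If pointwise decay is delicate, I would approximate $T[u]$ in the $D^{1,2}$ norm by $C_c^\infty(\mathbb{R}^N\setminus\{0\})$ functions and pass to the limit. Under this convergence both sides of the local inequality are continuous: the Lorentz quasi-norms via the embedding $D^{1,2}\hookrightarrow L^{2^*,2}$.

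Finally, the result extends from $\mathcal{S}$ to $\dot W^{s,2}(\mathbb{R}^N)$ by density. Both $\delta_{s,2}$ and $T$ are continuous in the $\dot W^{s,2}$ norm, by the boundedness of $M_s$ shown above and the fractional Hardy inequality. The ratio $\mathfrak{D}_{s,2}$ is continuous wherever $\|u\|_{\mathcal{L}^{2^*,2}_{N,s}}\neq0$, and the case $T[u]=0$ is trivial.
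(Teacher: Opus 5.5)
Your proposal is correct and follows the paper's argument. You apply Theorem~\ref{Theorem: Improvement of stability exponent C and F result} with $p=2$ (exponent $\max\{4,2p\}=4$) to $v=T[u]$, replace $\delta_2(T[u])$ by $\delta_{s,2}(u)$ via Theorem~\ref{Theorem: Equivalence bw local and nonlocal via Emden Trans}(i), and identify $d_2(T[u])$ with $\mathfrak{D}_{s,2}(u)$ through linearity and $T[w_s]=K_{N,s}\,\omega_1$. Your admissibility and density checks supply details the paper omits. The one slip is the remark about subtracting a constant: it is unnecessary, and it would break the identities. Indeed, $\int_{\mathbb{R}^N}|\nabla T[u]|^2\,dx=\delta_{s,2}(u)+\frac{(N-2)^2}{4}\|M_s\Phi_s u\|^2_{L^2(\mathcal{C})}$ shows $M_s\Phi_s u\in H^1(\mathcal{C})$, so $T[u]=\Phi_1^{-1}M_s\Phi_s u$ already lies in $D^{1,2}(\mathbb{R}^N)$.
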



\medskip 

\subsection{An Application: Hardy-Heisenberg Uncertainty Principle}

Having shown that the transport map \(T\) preserves the fractional Hardy deficit by converting it into a local deficit on the Emden--Fowler cylinder \(\mathcal{C} = \mathbb{R} \times \mathbb{S}^{N-1}\), we now exploit this structure to derive a sharp uncertainty principle for the underlying non-local operators.

In classical quantum mechanics, the Heisenberg uncertainty principle bounds the product of spatial and momentum variances via the Dirichlet energy. 
For the fractional Hardy inequality, such a formulation is obstructed by the non-local nature of \(\delta_{s,2}(u)\), which lacks a pointwise chain rule and integration-by-parts structure in \(\mathbb{R}^N\). To overcome this, we exploit the cylindrical geometry \(\mathcal{C}\), where the classical deficit $\delta_{2}(v)$ represents kinetic energy. 
This allows us to define positional variance on \(\mathcal{C}\) and transfer it to \(\mathbb{R}^N\) via the map \(T\). To this end, let us define the following quantities:

For \(u \in \mathcal{S}(\mathbb{R}^N)\), and $N \geq 4,$ define
\[
M_T(u) := \int_{\mathbb{R}^N} \frac{|T[u](x)|^2}{|x|^2}\,dx, 
\quad 
V_T(u) := \int_{\mathbb{R}^N} (\ln|x|)^2 \frac{|T[u](x)|^2}{|x|^2}\,dx.
\]
Under \(x = e^t w\), \(M_T(u)\) is the \(L^2(\mathcal{C})\) mass, while $V_{T}(u)$ strictly measures the axial dispersion (the positional variance) of the lifted state along the infinite cylindrical axis.

Under the Emden--Fowler transform, the deficit splits as
\[
\delta_{2}(T[u])=\int_{\mathbb{S}^{N-1}}\int_{\mathbb{R}}\Big(|\partial_t \phi_v|^2+|\nabla_w \phi_v|^2\Big)\,dt\,dw.
\]
Since the angular term is non-negative, it follows that \(\delta_{2}(T[u])\) is bounded below by the axial energy, reducing the problem to a 1-dimensional system in \(t\).

Now we are in a position to state the main uncertainty result: Let $\mathcal{S}(\mathbb{R}^N)$ denote the Schwartz class of rapidly decreasing smooth functions on $\mathbb{R}^N$.

\begin{theorem}[Fractional Hardy--Heisenberg Uncertainty Principle]\label{HHUP}
Let $N\geq 4$ and let \(u \in \mathcal{S}(\mathbb{R}^N)\). Then the fractional Hardy deficit controls the logarithmic dispersion of the transformed function \(T[u]\) through the sharp uncertainty-type inequality
\begin{equation}
   \delta_{s,2}(u)\, V_T(u)
\;\geq\;
\frac{1}{4}\,\big(M_T(u)\big)^2. 
\end{equation}
Moreover, the constant \(\frac{1}{4}\) is optimal. Equality is attained precisely by Gaussian functions of the form
$\{ A e^{-\alpha t^2}, \
A\in\mathbb{R}, \ 
\alpha>0\}.$
\end{theorem}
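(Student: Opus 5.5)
The plan is to move everything to the cylinder and then apply the classical one-dimensional Heisenberg inequality in the axial variable. Set $v = T[u]$ and $\phi_v := \Phi_1 v$, so that $\phi_v(t,w) = e^{\frac{N-2}{2}t} v(e^t w)$. By Theorem~\ref{properties of T} we have $T = \Phi_1^{-1}\circ M_s\circ\Phi_s$, which gives $\phi_v = M_s\Phi_s u$.

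\textbf{Step 1 (reduction of the three quantities).} By Theorem~\ref{Theorem: Equivalence bw local and nonlocal via Emden Trans}(i), $\delta_{s,2}(u)=\delta_2(v)$. I then use the Emden--Fowler identity for the local deficit recorded before the statement:
\[
\delta_2(v)=\int_{\mathbb{S}^{N-1}}\int_{\mathbb{R}}\big(|\partial_t\phi_v|^2+|\nabla_w\phi_v|^2\big)\,dt\,dw .
\]
The change of variables $x=e^tw$, $dx=e^{Nt}\,dt\,dw$, together with the isometry property of $\Phi_1$, gives
\[
M_T(u)=\int_{\mathbb{S}^{N-1}}\int_{\mathbb{R}}|\phi_v|^2\,dt\,dw,
\qquad
V_T(u)=\int_{\mathbb{S}^{N-1}}\int_{\mathbb{R}}t^2|\phi_v|^2\,dt\,dw .
\]
Dropping the angular gradient yields $\delta_{s,2}(u)\ge \int_{\mathbb{S}^{N-1}}\int_{\mathbb{R}}|\partial_t\phi_v|^2$.

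\textbf{Step 2 (one-dimensional Heisenberg).} For each fixed $w$, I integrate by parts in $t$:
\[
\int_{\mathbb{R}}|\phi|^2\,dt=-2\,\mathrm{Re}\int_{\mathbb{R}} t\,\bar\phi\,\partial_t\phi\,dt .
\]
Integrating over $w$ and applying Cauchy--Schwarz on $\mathbb{R}\times\mathbb{S}^{N-1}$ gives
\[
M_T(u)\le 2\Big(\int t^2|\phi_v|^2\Big)^{1/2}\Big(\int|\partial_t\phi_v|^2\Big)^{1/2}.
\]
Squaring and combining with Step 1 gives $\delta_{s,2}(u)V_T(u)\ge \tfrac14 M_T(u)^2$.

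\textbf{Step 3 (optimality and equality).} Equality forces two things. First, $\nabla_w\phi_v=0$, so $\phi_v$ is radial, i.e.\ only the mode $\ell=0$ survives. Second, Cauchy--Schwarz must be an equality, so $\partial_t\phi_v=-2\alpha t\,\phi_v$ with $\alpha>0$ by integrability. Hence $\phi_v=Ae^{-\alpha t^2}$. For these profiles the 1D identity is sharp, so the constant $\tfrac14$ cannot be improved, provided such $\phi_v$ are realised as $M_s\Phi_s u$ for admissible $u$. I would check this by inverting the multiplier on the $\ell=0$ mode: take $\widehat{\Phi_s u}=\sqrt{(\xi^2)/P_s(\xi,0)}\,\widehat{\phi_v}$. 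If exact Schwartz realisability fails, optimality still follows by approximation, which is enough for sharpness.

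\textbf{Main obstacle.} I expect the hard part to be justifying the integration by parts and the finiteness of $V_T(u)$ and $M_T(u)$ for $u\in\mathcal S$. The multiplier $\sqrt{P_s(\xi,\ell)/(\xi^2+\mu_\ell)}$ is singular at $(\xi,\ell)=(0,0)$, since $P_s(0,0)=0=\mu_0$. I would first check that its ratio extends to a smooth bounded symbol near $\xi=0$; this is where the trigamma constant $K_{N,s}$ of Theorem~\ref{Theorem: Equivalence bw local and nonlocal via Emden Trans}(ii) appears. Smoothness in $\xi$ then gives decay of $\phi_v$ in $t$, which provides the boundary terms $t|\phi_v|^2\to0$. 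I also need the $t$-decay of $\Phi_s u$ as $t\to\pm\infty$, which uses $N-2s>0$ near the origin and Schwartz decay at infinity. The hypothesis $N\ge4$ should be what guarantees the needed regularity here. Where these decay properties fail, I would instead state and prove the inequality under the convention that both sides may be infinite.
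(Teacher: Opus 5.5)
Your proposal is correct and is essentially the paper's proof: deficit preservation, the Emden--Fowler reduction, dropping $\nabla_w\phi_v$, the one-dimensional Heisenberg inequality in $t$, and the Cauchy--Schwarz/ODE equality analysis. You carry it out directly on $\phi_v=M_s\Phi_s u$ instead of applying the Euclidean lemma to $T[u]$, which is slightly cleaner since $T[u]$ need not be Schwartz.

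Your approximation fallback is in fact necessary, not optional. Let $g$ be the $\ell=0$ part of $\Phi_s u$ with $u\in\mathcal{S}(\mathbb{R}^N)$. Then $\hat g$ is holomorphic on a half-plane containing one of the simple poles $\pm i\frac{N+2s}{2}$ of the meromorphic continuation of $P_s(\cdot,0)$. So the identity $P_s(\xi,0)\,\hat g(\xi)^2=\xi^2\,\widehat{\phi_v}(\xi)^2$ cannot hold with Gaussian $\phi_v$: at that point the left side has odd order, while the right side is holomorphic and nonzero. Hence $\frac14$ is sharp but not attained in $\mathcal{S}(\mathbb{R}^N)$, and the Gaussian equality case must be read at the level of cylindrical profiles.
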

\medskip 

\begin{rem}
Although the optimal constant \(\frac{1}{4}\) is not attained for smooth compactly supported functions, it is asymptotically achieved. More precisely, there exists a sequence of non-local cylindrical states \(\phi_v(t,w) := (M_s \phi_u)(t,w)\) that converges to Gaussian profiles of the form \( \phi(t,w) = A e^{-\alpha t^2}\), where \(A \in \mathbb{R}\setminus\{0\}\) and \(\alpha > 0\). We refer to Theorem~\ref{thmsharp} for a precise statement.
\end{rem}


\medskip 

The structure of the paper is as follows.

\begin{itemize}

\item[Section \ref{Section 1: Introduction}:] Introduction, overview of the fractional Hardy inequality, and the notion of stability of fractional Hardy inequality. Statement of main results concerning the stability of fractional Hardy inequality. Improved stability exponent of classical (local) Hardy inequality. Equivalence of Local and Nonlocal Hardy Inequalities via the Emden--Fowler Transform, and state main results. 

\item[Section \ref{Section 2 : preliminaries}:] Preliminaries: fractional Sobolev spaces, extremizers for the nonlocal Hardy inequality, weak Lebesgue spaces, and the analytical tools used throughout—namely the Emden–Fowler transformation, pseudo-differential operators, and the Mellin transform.

\item[Section \ref{Section : 3}:] Scale-invariant properties of the weighted Gagliardo seminorm and the associated scaling behavior of the fractional Sobolev inequality.

\item[Section \ref{Section : 4}:] Stability analysis of the nonlocal Hardy inequality, treating separately the cases $1<p<2$ and $p \geq 2$, via rearrangement-free arguments.

\item[Section \ref{Section : 5}:] Rearrangement-free stability analysis of the local Hardy inequality based on scaling methods, yielding an improved stability exponent.

\item[Section \ref{Section : 6}:] Correspondence between nonlocal and local Hardy inequalities for $p=2$, via a transformation $T$ constructed through Emden–Fowler type transform and pseudo-differential operator techniques; preservation of the Hardy deficit and action on extremizers. 

\item[Section \ref{Section : 7}:] Applications:  Hardy-Heisenberg-type uncertainty principle, and existence of extremals. 

\end{itemize}


\medskip

\section{Preliminary: functional analytic settings and auxiliary results}\label{Section 2 : preliminaries} 
In this section, we introduce the definitions, and some preliminary results that will be used throughout the paper. We use the following notation: for a measurable set $\Omega \subset \mathbb{R}^{N}$, the symbol $(u)_{\Omega}$ denotes the average of the function $u$ over $\Omega$, that is,
\begin{equation*}
    (u)_{\Omega} :=  \frac{1}{|\Omega|} \int_{\Omega} u(x) \,  dx = \fint_{\Omega} u(x) \, dx. 
\end{equation*}
Here, $|\Omega|$ denotes the Lebesgue measure of $\Omega$ and we assume $|\Omega| < \infty.$ The parameter $s$ is always assumed to belong to $(0,1)$. Also, $C>0$ denotes a generic constant, which may change from line to line.

\smallskip

\subsection{Function spaces} Let $\Omega$ be an open set in $\mathbb{R}^{N}$, and let $s \in (0,1)$. For any $p \in [1, \infty)$, the fractional Sobolev space is defined by
\begin{equation}\label{defn frac sob space}
W^{s,p}(\Omega) := \left\{    u \in L^{p}(\Omega) : \int_{\Omega} \int_{\Omega}  \frac{|u(x)-u(y)|^{p}}{|x-y|^{N+sp}} \, dx \, dy < \infty \right\}.
\end{equation}
This space is equipped with the norm
\begin{equation*}
    \|u\|_{W^{s,p}(\Omega)} := \left( \|u\|^{p}_{L^{p}(\Omega)} + [u]^{p}_{W^{s,p}(\Omega)} \right)^{\frac{1}{p}},
\end{equation*}
where
\begin{equation*}
    [u]_{W^{s,p}(\Omega)} := \left( \int_{\Omega} \int_{\Omega}  \frac{|u(x)-u(y)|^{p}}{|x-y|^{N+sp}} \, dx \, dy \right)^{\frac{1}{p}} 
\end{equation*}
is called the Gagliardo seminorm.

\smallskip

The fractional $p$-Laplacian of a function  $\varphi \in C^{\infty}_{c}(\mathbb{R}^{N})$ is defined by
\begin{align}\label{frac-lap}
(-\Delta_{p})^s \varphi(x)
 & :=
2 \,
\mathrm{P.V.}\!\int_{\mathbb{R}^{N}}
\frac{|\varphi(x)-\varphi(y)|^{p-2}(\varphi(x)-\varphi(y))}{|x-y|^{N+sp}}\, dy \nonumber \\ & =  2 \lim_{\varepsilon \to 0} \int_{||x|-|y||> \varepsilon} \frac{|\varphi(x)-\varphi(y)|^{p-2}(\varphi(x)-\varphi(y))}{|x-y|^{N+sp}} \, dy, \qquad   x \in \mathbb{R}^{N},
\end{align}
where $\mathrm{P.V.}$ denotes the Cauchy principal value.

\smallskip

\smallskip

\subsection{Space of virtual ground state solution} Recall that the fractional Hardy deficit is defined by
\begin{equation*}
    \delta_{s,p}(u) = \int_{\mathbb{R}^{N}}\int_{\mathbb{R}^{N}} \frac{|u(x)-u(y)|^{p}}{|x-y|^{N+sp}} \, dx \, dy - \mathcal{C}_{N,s,p} \int_{\mathbb{R}^{N}} \frac{|u(x)|^{p}}{|x|^{sp}} \, dx.
\end{equation*}

\smallskip

\begin{definition}
    We say $u$ is a virtual extremizer of \eqref{Fractional Hardy} if 
    \begin{align*}\label{dist-solution}
\int_{\mathbb{R}^{N}} \int_{\mathbb{R}^{N}} &
\frac{|u(x)-u(y)|^{p-2}(u(x)-u(y))(\varphi(x)-\varphi(y))}
{|x-y|^{N+sp}} \, dx\,dy \nonumber
\\ & =
\mathcal{C}_{N,s,p}
\int_{\Omega} \frac{|u(x)|^{p-2}u(x)}{|x|^{sp}}\, \varphi(x)\, dx, \quad \forall \, \varphi \in C^{\infty}_{c}(\Omega),
\end{align*}
where $\Omega=\mathbb{R}^N\setminus \{0\}$.
\end{definition}

The following lemma shows that $\omega(x)= |x|^{-\frac{N-sp}{p}}$ is a positive solution of the Euler–Lagrange equation associated with \eqref{Fractional Hardy}, i.e., $\delta_{s,p}(\omega)=0$, and hence $\omega$ is a formal extremizers of \eqref{Fractional Hardy}.

\begin{lemma}\label{Lemma: Ground state Hardy inequality}
    One has uniformly for $x$ from compacts in $\mathbb{R}^{N} \setminus \{ 0 \}$
\begin{equation}
        2 \lim_{\varepsilon \to 0} \int_{||x|-|y||> \varepsilon} \frac{|\omega(x)-\omega(y)|^{p-2}(\omega(x) - \omega(y)) }{|x-y|^{N+sp}} \, dy = \mathcal{C}_{N,s,p} \frac{\omega(x)^{p-1}}{|x|^{sp}}.
    \end{equation}
\end{lemma}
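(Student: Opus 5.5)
The plan is to compute the principal value directly, using the homogeneity of $\omega$ together with radial symmetry. This is the Frank--Seiringer ground state computation.

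\textbf{Reduction to $|x|=1$.} Fix $x\neq 0$ and write $y=|x|z$, so that $dy=|x|^N dz$. Homogeneity gives $\omega(|x|z)=|x|^{-(N-sp)/p}\omega(z)$. The kernel contributes $|x|^{-N-sp}$ and the nonlinearity contributes $|x|^{-(N-sp)(p-1)/p}$. The excluded region $||x|-|y||>\varepsilon$ becomes $|1-|z||>\varepsilon/|x|$. Hence the truncated integral equals
\[
|x|^{-sp}\,\omega(x)^{p-1}\, I_{\varepsilon/|x|}(x/|x|),
\]
where
\[
I_\eta(e):=\int_{|1-|z||>\eta}\frac{|1-|z|^{-\gamma}|^{p-2}(1-|z|^{-\gamma})}{|e-z|^{N+sp}}\,dz, \qquad \gamma=\frac{N-sp}{p}.
\]
By rotation invariance $I_\eta(e)$ does not depend on $e\in\mathbb{S}^{N-1}$. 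Uniformity for $x$ in compacts of $\mathbb{R}^N\setminus\{0\}$ then reduces to the single scalar limit $\lim_{\eta\to0}I_\eta$: since $|x|$ ranges over a compact interval of $(0,\infty)$, the quantity $\varepsilon/|x|\to0$ uniformly.

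\textbf{Polar coordinates.} Writing $z=r\theta$ and integrating over the angle gives $\int_{\mathbb{S}^{N-1}}|e-r\theta|^{-N-sp}d\theta=\Phi_{N,s,p}(r)$, with the two-point version for $N=1$. So
\[
I_\eta=\int_{|1-r|>\eta}|1-r^{-\gamma}|^{p-2}(1-r^{-\gamma})\,r^{N-1}\Phi(r)\,dr.
\]
On the piece $r>1$ substitute $r=1/\rho$. The kernel obeys $\Phi(1/\rho)=\rho^{N+sp}\Phi(\rho)$, and $1-\rho^{\gamma}=-\rho^{\gamma}(1-\rho^{-\gamma})$, up to sign conventions. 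Collecting powers, the $r>1$ integrand becomes $-\rho^{sp-1}\,|1-\rho^{\gamma}|^{p-2}(1-\rho^\gamma)\cdot(\ldots)$.

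The key point is that the symmetric truncation $|1-r|>\eta$ maps to $|1-\rho|>\eta/(1\pm\eta)$ to leading order. The two pieces on $(0,1)$ therefore combine, near $\rho=1$, into an integrand whose singular parts cancel. The possibly divergent $(1-\rho)^{-1-sp}\cdot(1-\rho)^{p-1}$ behaviour is integrable when $sp<p$, i.e.\ $s<1$. Any discrepancy produced by the slightly asymmetric truncation tends to $0$; this is the principal value argument.

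\textbf{Identifying the constant.} The limit equals
\[
\int_0^1\Big(|1-r^{-\gamma}|^{p-2}(1-r^{-\gamma})\,r^{N-1}-|1-r^{\gamma}|^{p-2}(1-r^{\gamma})\,r^{sp-1}\Big)\Phi(r)\,dr .
\]
Factoring $r^{-\gamma(p-1)}$ out of the first term, and using $N-1-\gamma(p-1)=sp-1+\gamma$, turns this into
\[
\int_0^1 r^{sp-1}(1-r^{\gamma})^{p-1}\,(r^{\gamma}\cdot\ldots)\,\Phi(r)\,dr .
\]
Comparing with the definition $\mathcal{C}_{N,s,p}=2\int_0^1 r^{sp-1}|1-r^{\gamma}|^p\Phi\,dr$ shows that the two expressions agree, after multiplying by the factor $2$ in front of the principal value. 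Checking this exact algebraic identity is the step I expect to be most error-prone. If the direct matching fails, I would instead cite the representation of $\mathcal{C}_{N,s,p}$ from Frank--Seiringer \cite[Lemma~2.3]{Frank2008}, which is precisely this computation.

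The main obstacle is the cancellation at $r=1$ under the truncation $||x|-|y||>\varepsilon$, rather than the usual truncation $|x-y|>\varepsilon$. I would handle it by a Taylor expansion of $1-r^{\pm\gamma}$ and of $\Phi(r)\sim c|1-r|^{-1-sp}$ near $r=1$.
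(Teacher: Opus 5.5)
You follow the right route, and it is the same computation the paper relies on. The paper gives no argument of its own; it cites Lemma~3.1 of \cite{Frank2008}, whose proof is exactly: scale to $|x|=1$, pass to polar coordinates, fold $r>1$ onto $(0,1)$ via $r=1/\rho$ using $\Phi(1/\rho)=\rho^{N+sp}\Phi(\rho)$, and control the mismatched truncation. However, the step you flagged as error-prone is wrong as written, and the error destroys the cancellation the lemma rests on. For $r>1$ one has $1-r^{-\gamma}>0$. After $r=1/\rho$, where the sign of $dr=-\rho^{-2}d\rho$ is absorbed by reversing the limits, you get
\[
\int_{1+\eta}^{\infty}(1-r^{-\gamma})^{p-1}r^{N-1}\Phi(r)\,dr=+\int_0^{1/(1+\eta)}\rho^{sp-1}(1-\rho^{\gamma})^{p-1}\Phi(\rho)\,d\rho ,
\]
not $-\rho^{sp-1}(\cdots)$. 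The $r<1$ piece equals $-\int_0^{1-\eta}r^{sp-1+\gamma}(1-r^{\gamma})^{p-1}\Phi(r)\,dr$ by your identity $N-1-\gamma(p-1)=sp-1+\gamma$. In your displayed limit both terms are negative on $(0,1)$. It therefore equals $-\int_0^1 r^{sp-1}(1-r^\gamma)^{p-1}(1+r^\gamma)\Phi(r)\,dr$, which is negative and diverges whenever $p(1-s)\le 1$. With the correct sign the combined integrand is $r^{sp-1}(1-r^\gamma)^{p-1}(1-r^\gamma)=r^{sp-1}(1-r^\gamma)^{p}$, which is literally the integrand of $\tfrac12\mathcal{C}_{N,s,p}$. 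Your ``$\ldots$'' is simply $1-r^\gamma$, and no fallback to the reference is needed.

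Your integrability remark is off by one power for the same reason. The product $(1-\rho)^{p-1}(1-\rho)^{-1-sp}$ is integrable at $\rho=1$ only if $p(1-s)>1$. What is integrable for every $s<1$ is the post-cancellation $(1-\rho)^{p}(1-\rho)^{-1-sp}$. This matters in the truncation step, because the leftover term
\[
\int_{1-\eta}^{1/(1+\eta)}\rho^{sp-1}(1-\rho^{\gamma})^{p-1}\Phi(\rho)\,d\rho
\]
has the uncancelled, possibly non-integrable profile $(1-\rho)^{p-2-sp}$. It vanishes not by integrability but because the interval has length $\eta^{2}/(1+\eta)$ and $1-\rho\asymp\eta$ on it. With $\Phi(\rho)\asymp|1-\rho|^{-1-sp}$ near $\rho=1$ (also for $N=1$), this gives the bound $O(\eta^{2}\cdot\eta^{p-1}\cdot\eta^{-1-sp})=O(\eta^{p(1-s)})$. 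Add this estimate and note that $\int_0^{1-\eta}r^{sp-1}(1-r^\gamma)^{p}\Phi\,dr\to\tfrac12\mathcal{C}_{N,s,p}$ by monotone convergence. Your scaling argument for uniformity on compacts then finishes the proof.
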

\begin{proof}
    See \cite[Lemma $3.1$]{Frank2008} for the proof.
\end{proof}

Let $\mathcal{Z}$ denotes the set of virtual extremizer defined as 

\[ 
\mathcal{Z} := \{ a \, \omega : a \in \mathbb{R} \}.
\]

\begin{rem}
Heuristically, let $u_{0} (\neq 0 )$ be an extremizer  such that $u_{0} \notin \mathcal{Z}.$  Then $\delta_{s,p}(u_{0})=0$. For $p \geq 2$, from \eqref{Fractional Hardy with a remainder for p>2} with $u_{0}$, we have 
\begin{equation*}
    \int_{\mathbb{R}^{N}} \int_{\mathbb{R}^{N}} \frac{|v(x)-v(y)|^{p}}{|x-y|^{N+sp}} \, \frac{dx}{|x|^{\frac{N-sp}{2}}} \, \frac{dy}{|y|^{\frac{N-sp}{2}}}=0,
\end{equation*}
where $v= \omega^{-1}u_{0}$. Since the integrand is nonnegative, this implies
\begin{equation*}
 \frac{|v(x)-v(y)|^{p}}{|x-y|^{N+sp}} = 0 \quad a.e. \quad \text{with respect to} \quad \frac{dx}{|x|^{\frac{N-sp}{2}}} \times \frac{dy}{|y|^{\frac{N-sp}{2}}}  ,
\end{equation*}
and hence $v(x)=v(y)$ a.e. for $(x,y) \in \mathbb{R}^{N} \times \mathbb{R}^{N}$. Therefore, $v$ is constant almost everywhere, i.e.,
\begin{equation*}
    v(x) = C \quad \text{a.e. } \quad \text{for some } C \neq 0. 
\end{equation*}
Recalling that $v= \omega^{-1} u_{0}$, we obtain $u_{0}= C \, \omega$ a.e. for some $C \neq 0$. Therefore,
\begin{equation*}
    \mathcal{Z}= \{ a \, \omega : a  \in \mathbb{R}\}.
\end{equation*}
Similarly, applying \eqref{Fractional Hardy with a remainder for p<2}, the above relation also hold true for the case $1<p<2$. This shows, at least heuristically, that $\mathcal{Z}$ is the space of all virtual extremizers.
\end{rem}

\subsection{Rearrangement Techniques}

In this subsection, we briefly define some symmetrization techniques, Lorentz norms, and related embeddings. Although these techniques are not directly used in the proofs of our main results, they are needed to show that the  virtual extremizers  belongs to the Marcinkiewicz space and to define the distance function. For this purpose, we recall some basic definitions and properties of Lorentz norms.

\smallskip

Let $f$ be any measurable function on $\mathbb{R}^{N}$. The decreasing rearrangement of $f$, denoted by $f^{*}$, is defined by  
\begin{equation}\label{1D}
    f^{*}(t)
    :=
    \inf \left\{ \lambda>0 : \mu_f(\lambda) \leq t \right\},
\end{equation}
where $\mu_{f}$ is the distribution function given by  
\begin{equation*}
    \mu_{f}(\lambda)
    :=
    \bigl| \{ x \in \mathbb{R}^N : |f(x)| > \lambda \} \bigr|.
\end{equation*}

\begin{definition}[Lorentz space]\label{defi-lorentz}
Let $f$ be a measurable function on $\mathbb{R}^{N}$, and let $0 < p,q \leq \infty$. We define  
\begin{equation*}
    \|f\|_{L^{p,q}(\mathbb{R}^{N})} =
    \begin{cases}
        \left( \displaystyle \int_{0}^{\infty} \left( t^{\frac{1}{p}} f^{*}(t) \right)^{q} \,\frac{dt}{t} \right)^{\frac{1}{q}}, & \text{if } q < \infty, \\[1em]
        \displaystyle \sup_{t>0} \, t^{\frac{1}{p}} f^{*}(t), & \text{if } q = \infty.
    \end{cases}
\end{equation*}
The set of all functions $f$ such that $\| f \|_{L^{p,q}(\mathbb{R}^{N})} < \infty$ is denoted by $L^{p,q}(\mathbb{R}^{N})$, and is called the Lorentz space with indices $p$ and $q$. In the case $q = \infty$, the space $L^{p,\infty}(\mathbb{R}^{N})$ is also known as the Marcinkiewicz space or the weak $L^{p}$ space.
\end{definition}

\smallskip

\textbf{Hardy--Littlewood rearrangement inequality.} Let $f,g$ be nonnegative measurable functions on $\mathbb{R}^{N}$. Then
\begin{equation}\label{Hardy-Littlehood}
\int_{\mathbb{R}^{N}} f(x)g(x)\,dx
\leq \int_{0}^{\infty} f^{*}(t)\, g^{*}(t)\, dt.
\end{equation}

\smallskip

The following lemma shows that the  virtual extremizers set $\mathcal{Z}$ associated with the fractional Hardy inequality \eqref{Fractional Hardy} is contained in $L^{p^{*}_{s}, \infty}(\mathbb{R}^{N})$.

\begin{lemma}\label{Marcinkiewicz}
Let $sp<N$ and define
\begin{equation*}
    \omega(x) := |x|^{-\frac{N-sp}{p}}, \qquad x \in \mathbb{R}^{N} \setminus \{0\}.
\end{equation*}
Then $\omega \in L^{p^{*}_{s},\infty}(\mathbb{R}^{N})$.
\end{lemma}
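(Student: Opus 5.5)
The plan is to compute the distribution function of $\omega$ explicitly and read off its decreasing rearrangement; no abstract tools are needed. Set $\beta := \frac{N-sp}{p} > 0$, which is positive because $sp<N$. For $\lambda>0$ we have $|\omega(x)|>\lambda$ if and only if $|x|^{-\beta}>\lambda$, that is, $|x|<\lambda^{-1/\beta}$. Hence
\[
\mu_\omega(\lambda) = \bigl|B(0,\lambda^{-1/\beta})\bigr| = |B_1|\,\lambda^{-N/\beta} = |B_1|\,\lambda^{-p^{*}_{s}},
\]
since $N/\beta = \frac{Np}{N-sp} = p^{*}_{s}$. Here $|B_1|$ denotes the volume of the unit ball; the origin is a null set and can be ignored.

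Next, I would invert this using the definition \eqref{1D}. Because $\mu_\omega$ is continuous and strictly decreasing from $\infty$ to $0$ on $(0,\infty)$, the infimum in \eqref{1D} is attained at the unique $\lambda$ with $|B_1|\lambda^{-p^{*}_{s}}=t$. This gives
\[
\omega^{*}(t) = \Bigl(\frac{|B_1|}{t}\Bigr)^{1/p^{*}_{s}}, \qquad t>0,
\]
which is simply $\omega$ written radially in terms of the volume variable $t=|B_1||x|^N$.

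Finally, substituting into Definition \ref{defi-lorentz} with $q=\infty$ gives
\[
t^{1/p^{*}_{s}}\,\omega^{*}(t) = |B_1|^{1/p^{*}_{s}}
\]
for every $t>0$. Therefore $\|\omega\|_{L^{p^{*}_{s},\infty}(\mathbb{R}^N)} = |B_1|^{1/p^{*}_{s}} < \infty$, so $\omega \in L^{p^{*}_{s},\infty}(\mathbb{R}^N)$, and by homogeneity the same holds for every $\omega_a$. The same computation also shows that $t^{1/p^{*}_{s}}\omega^{*}(t)$ is constant, so $\int_0^\infty \bigl(t^{1/p^{*}_{s}}\omega^{*}(t)\bigr)^{r}\,\frac{dt}{t}$ diverges for every finite $r$. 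This confirms the remark in the introduction that $\omega\notin L^{p^{*}_{s}}$, and more generally $\omega\notin L^{p^{*}_{s},r}$ for $r<\infty$.

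There is no genuine obstacle here. The only point requiring care is the inversion of the distribution function in \eqref{1D}, which is immediate because $\mu_\omega$ is continuous and strictly monotone. For $N=1$ the same argument works with $|B_1|=2$.
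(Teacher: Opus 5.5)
Your proposal is correct and follows the same route as the paper: compute $\mu_\omega(\lambda)=|B_1|\lambda^{-p^{*}_{s}}$, invert it to get $\omega^{*}(t)=(|B_1|/t)^{1/p^{*}_{s}}$, and observe that $t^{1/p^{*}_{s}}\omega^{*}(t)$ is constant. The paper's constant $\frac{|\mathbb{S}^{N-1}|}{N}$ is exactly your $|B_1|$. Your added remark that the same computation gives $\omega\notin L^{p^{*}_{s},r}$ for every finite $r$ is also correct; the paper only asserts this informally in the introduction.
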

\begin{proof}
Let $\lambda>0$. We compute the distribution function of $\omega$:
\begin{align*}
\{ x \in \mathbb{R}^{N} : \omega(x) > \lambda \}
=
\left\{ x \in \mathbb{R}^{N} : |x|^{-\frac{N-sp}{p}} > \lambda \right\} =
\left\{ x \in \mathbb{R}^{N} : |x| < \lambda^{-\frac{p}{N-sp}} \right\}.
\end{align*}
Since the measure of a ball of radius $r$ in $ \mathbb{R}^{N}$ is $\left( \frac{\mathbb{S}^{N-1}}{N} \right) r^{N}$, we obtain
\begin{equation*}
\mu_{\omega}(\lambda)
=
\left( \frac{\mathbb{S}^{N-1}}{N} \right) \lambda^{- \frac{Np}{N-sp}}
=
\left( \frac{\mathbb{S}^{N-1}}{N} \right) \lambda^{-p^{*}_{s}}.
\end{equation*}
By definition of the decreasing rearrangement,
\begin{align*}
\omega^{*}(t)
=
\inf \left\{ \lambda>0 : \left(\frac{\mathbb{S}^{N-1}}{N} \right) \lambda^{-p^{*}_{s}} \leq t \right\} & =
\inf \left\{ \lambda>0 : \lambda \geq \left( \frac{\mathbb{S}^{N-1}}{N} \right)^{\frac{1}{p^{*}_{s}}}  \frac{1}{t^{\frac{1}{p^{*}_{s}}}}\right\} \\ & =
\left( \frac{\mathbb{S}^{N-1}}{N} \right)^{\frac{1}{p^{*}_{s}}}  \frac{1}{t^{\frac{1}{p^{*}_{s}}}}.
\end{align*}
Therefore,
\begin{equation*}
\|\omega\|_{L^{p^{*}_{s},\infty}}
=
\sup_{t>0}
t^{\frac{1}{p^{*}_{s}}}
\left( \frac{\mathbb{S}^{N-1}}{N} \right)^{\frac{1}{p^{*}_{s}}}  \frac{1}{t^{\frac{1}{p^{*}_{s}}}}
=
\left( \frac{\mathbb{S}^{N-1}}{N} \right)^{\frac{1}{p^{*}_{s}}} 
<
\infty.
\end{equation*}
This proves that $\omega \in L^{p^{*}_{s},\infty}(\mathbb{R}^N)$.
\end{proof}

The following result establishes a relation between the fractional Hardy potential and its Lorentz norms. This follows from the Hardy–Littlewood rearrangement inequality.

\begin{lemma}\label{Lemma: Hardy potential and Lorentz}
For every measurable function $u$ on $\mathbb{R}^{N}$,
\begin{equation}
\int_{\mathbb{R}^{N}} \frac{|u(x)|^{p}}{|x|^{sp}} \, dx
\leq \left( \frac{\mathbb{S}^{N-1}}{N} \right)^{\frac{sp}{N}} \, \|u\|_{L^{p_s^*,\,p}(\mathbb{R}^{N})}^{p}.
\end{equation}
\end{lemma}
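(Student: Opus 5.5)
The plan is to apply the Hardy--Littlewood rearrangement inequality \eqref{Hardy-Littlehood} to the pair $f=|u|^{p}$ and $g=|x|^{-sp}$, and then recognize the resulting one-dimensional integral as a Lorentz norm.

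\emph{Rearranging $f$.} Since $t\mapsto t^{p}$ is increasing on $[0,\infty)$, the distribution functions satisfy $\mu_{|u|^p}(\lambda^p)=\mu_u(\lambda)$. Hence $(|u|^{p})^{*}=(u^{*})^{p}$.

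\emph{Rearranging $g$.} The function $g$ is radially decreasing, so its rearrangement is computed exactly as for $\omega$ in Lemma~\ref{Marcinkiewicz}. We have
\[
\{x:|x|^{-sp}>\lambda\}=B_{\lambda^{-1/(sp)}},
\]
so $\mu_g(\lambda)=\frac{|\mathbb{S}^{N-1}|}{N}\lambda^{-N/(sp)}$. Inverting gives
\[
g^{*}(t)=\Big(\frac{|\mathbb{S}^{N-1}|}{N}\Big)^{\frac{sp}{N}}t^{-\frac{sp}{N}}.
\]

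\emph{Combining.} Plugging both rearrangements into \eqref{Hardy-Littlehood} yields
\[
\int_{\mathbb{R}^N}\frac{|u|^p}{|x|^{sp}}\,dx\le \Big(\frac{|\mathbb{S}^{N-1}|}{N}\Big)^{\frac{sp}{N}}\int_0^\infty u^{*}(t)^{p}\,t^{-\frac{sp}{N}}\,dt .
\]
It remains to match the exponent of $t$. From $p^*_s=\frac{Np}{N-sp}$ we get $\frac{p}{p_s^*}=\frac{N-sp}{N}=1-\frac{sp}{N}$, so
\[
t^{-\frac{sp}{N}}=t^{\frac{p}{p^*_s}}\,\frac{1}{t}.
\]
The right-hand integral is therefore $\int_0^\infty\big(t^{1/p_s^*}u^*(t)\big)^p\frac{dt}{t}=\|u\|^p_{L^{p_s^*,p}}$ by Definition~\ref{defi-lorentz}. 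This is the claimed bound, with the constant read as $|\mathbb{S}^{N-1}|/N$, matching the paper's notation $\mathbb{S}^{N-1}/N$.

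\emph{Main obstacle.} There is no real obstacle; the only points requiring care are bookkeeping ones. First, one must verify the identity $(|u|^p)^*=(u^*)^p$. Second, one should note that \eqref{Hardy-Littlehood} holds for arbitrary nonnegative measurable functions, so no finiteness assumptions are needed: if the right-hand side is infinite, the inequality is trivial.
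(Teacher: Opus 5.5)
Your proposal is correct and follows the paper's proof: apply the Hardy--Littlewood rearrangement inequality to $|u|^p$ and $|x|^{-sp}$, compute $(|x|^{-sp})^*(t)=\bigl(|\mathbb{S}^{N-1}|/N\bigr)^{sp/N}t^{-sp/N}$, and use $t^{-sp/N}=t^{p/p_s^*}\,t^{-1}$ to recognize $\|u\|^p_{L^{p_s^*,p}}$. Your explicit check that $(|u|^p)^*=(u^*)^p$ fills in a step the paper uses without comment.
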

\begin{proof}
By the Hardy--Littlewood rearrangement inequality \eqref{Hardy-Littlehood},
\begin{equation*}
\int_{\mathbb{R}^{N}} \frac{|u(x)|^{p}}{|x|^{sp}} \, dx
\leq \int_{0}^{\infty} (u^{*}(t))^{p} (|x|^{-sp})^{*}(t)\, dt.
\end{equation*}
Let $g(x)=|x|^{-sp}$. Then direct computation gives $g^{*}(t) = \left( \frac{\mathbb{S}^{N-1}}{N} \right)^{\frac{sp}{N}}  \frac{1}{t^{\frac{sp}{N}}}$. Hence,
\begin{equation*}
\int_{\mathbb{R}^{N}} \frac{|u(x)|^{p}}{|x|^{sp}} \, dx
\leq \left( \frac{\mathbb{S}^{N-1}}{N} \right)^{\frac{sp}{N}} \int_{0}^{\infty}
\left(t^{\frac{1}{p^{*}_{s}}} u^{*}(t)\right)^p \frac{dt}{t}
= \left( \frac{\mathbb{S}^{N-1}}{N} \right)^{\frac{sp}{N}} \|u\|_{L^{p_s^*,\,p}}^{p}.
\end{equation*}
This proves the lemma.
\end{proof}

\medskip

\subsection{Emden-Fowler Transformation and related Cylindrical Manifolds}

The Emden-Fowler transformation is a fundamental conformal mapping used to relate the analysis of elliptic partial differential equations on $\mathbb{R}^{N} \setminus \{0\}$ to equations on the infinite cylinder $\mathcal{C} = \mathbb{R} \times \mathbb{S}^{N-1}$. This transformation is particularly vital in the study of critical Sobolev exponents and the classification of singularities for nonlinear elliptic equations.

\medskip

Let $x \in \mathbb{R}^{N} \setminus \{0\}$. We introduce polar coordinates $(r, \theta)$ where $r = |x| \in (0, \infty)$ and $\theta = \frac{x}{|x|} \in \mathbb{S}^{N-1}$. The Emden-Fowler transformation is defined by the diffeomorphism $\Phi: \mathbb{R}^{N} \setminus \{0\} \to \mathbb{R} \times \mathbb{S}^{N-1}$ given by $$t = -\log r, \quad \theta = \frac{x}{|x|}.$$Under this change of variables, the Euclidean metric $g_{\mathbb{R}^N} = dr^2 + r^2 g_{\mathbb{S}^{N-1}}$ is transformed into a product metric. Specifically, since $dr = -e^{-t}dt$, we have $$g_{\mathbb{R}^{N}} = e^{-2t} (dt^2 + g_{\mathbb{S}^{N-1}}).$$This demonstrates that the Euclidean space (minus the origin) is conformally equivalent to the cylinder $\mathcal{C}$ equipped with the standard product metric $g_{\mathcal{C}} = dt^2 + g_{\mathbb{S}^{N-1}}$.

\medskip

In the context of the conformal Laplacian, let $u$ be a function on $\mathbb{R}^{N}$. We define the transformed function $v$ on the cylinder by $$v(t, \theta) = r^{\frac{N-2}{2}} u(r, \theta) = e^{-\frac{N-2}{2}t} u(e^{-t}, \theta).$$The operator $-\Delta_{\mathbb{R}^{N}}$ is related to the operator on the cylinder by $$-\Delta_{\mathbb{R}^{N}} u = r^{-\frac{N+2}{2}} \left( -\partial_t^2 v - \Delta_{\mathbb{S}^{N-1}} v + \left( \frac{N-2}{2} \right)^2 v \right).$$This identity, where $\Delta_{\mathbb{S}^{N-1}}$ denotes the standard negative-definite Laplace-Beltrami operator on the sphere is central to the analysis of the Yamabe problem (see \cite{fowler1931further}).

\subsection{Pseudo-Differential Operators (PDOs)}

In the seminal paper \cite{kohn1965algebra}, Kohn and Nirenberg introduced the modern framework of PDOs. Pseudo-differential operators generalize differential operators, allowing for the study of elliptic and hyperbolic equations with variable coefficients through the lens of Fourier analysis.

\medskip 

A pseudo-differential operator $A$ is defined via its symbol $a(x, \xi)$. We define the standard symbol class $S^m_{\rho, \delta}(\mathbb{R}^{N} \times \mathbb{R}^{N})$ for $m \in \mathbb{R}$ and $0 \leq \delta \leq \rho \leq 1$. A smooth function $a(x, \xi)$ belongs to $S^m_{\rho, \delta}$ if for every pair of multi-indices $\alpha, \beta$, there exists a constant $C_{\alpha, \beta}$ such that $$|\partial_\xi^\alpha \partial_x^\beta a(x, \xi)| \leq C_{\alpha, \beta} (1 + |\xi|)^{m - \rho|\alpha| + \delta|\beta|}.$$

\medskip 

\begin{definition}
   Given a symbol $a \in S^m_{\rho, \delta}$, the associated pseudo-differential operator $A = \text{Op}(a)$ acting on a Schwartz function $u \in \mathcal{S}(\mathbb{R}^{N})$ is defined by the integral $$Au(x) = \frac{1}{(2\pi)^{N}} \int_{\mathbb{R}^{N}} e^{i x \cdot \xi} a(x, \xi) \hat{u}(\xi) \, d\xi,$$where $\hat{u}(\xi)$ denotes the Fourier transform of $u$. 
\end{definition}

\begin{rem}
When $a(x, \xi)$ is a polynomial in $\xi$, $A$ reduces to a classical linear partial differential operator. For the calculus of symbols we refer to \cite{hormander2007analysis}.
The connection between Euclidean harmonic analysis and the symbolic calculus can be found in \cite{stein1993harmonic}.
\end{rem}

Now we can list some properties: 

\begin{itemize}
    \item[1)]  Adjoints: For $a \in S^m_{1, 0}$, the adjoint $A^*$ is also a PDO with symbol $a^* \in S^m_{1, 0}$.
    \item[2)] Composition: If $A \in \text{Op}(S^{m_1}_{\rho, \delta})$ and $B \in \text{Op}(S^{m_2}_{\rho, \delta})$ with $0 \leq \delta < \rho \leq 1$, then the composition $AB \in \text{Op}(S^{m_1 + m_2}_{\rho, \delta})$.
    \item[3)] Sobolev Continuity: An operator $A \in \text{Op}(S^m_{1, 0})$ is a bounded linear map from $H^s(\mathbb{R}^{N})$ to $H^{s-m}(\mathbb{R}^{N})$ for any $s \in \mathbb{R}$.
\end{itemize}

\subsection{ The Mellin Transform}

In domains with conical singularities or cylindrical ends (such as $\mathbb{R}^{N} \setminus \{0\}$ under the Emden-Fowler map), the standard Fourier transform is often replaced by the Mellin transform to better respect the scaling symmetry of the domain.
\begin{definition}
    Let $f(r)$ be a function defined on $\mathbb{R}_+ = (0, \infty)$. The Mellin transform $\mathcal{M}f$ is defined for $s \in \mathbb{C}$ by $$(\mathcal{M}f)(s) = \tilde{f}(s) = \int_0^\infty r^{s-1} f(r) \, dr.$$The integral converges in a vertical strip $a < \text{Re}(s) < b$, determined by the growth of $f$ at $0$ and $\infty$. The inverse Mellin transform is given by $$f(r) = \frac{1}{2\pi i} \int_{\gamma-i\infty}^{\gamma+i\infty} r^{-s} \tilde{f}(s) \, ds,$$ where $\gamma \in (a, b)$.
\end{definition}
The Mellin transform is the natural tool for ``Euler-type" differential operators. By integration by parts, assuming boundary terms vanish, we have $$\mathcal{M}(r \partial_r f)(s) = -s \tilde{f}(s).$$By applying the Emden-Fowler change of variables $r = e^{-t}$ (and thus $dr = -e^{-t} dt$), the Mellin transform of $f(r)$ is unitarily equivalent to the Fourier transform of $g(t) = f(e^{-t})$. Noting the reversal of integration bounds as $r \to 0$ corresponds to $t \to \infty$:$$\int_0^\infty r^{s-1} f(r) dr = \int_{\infty}^{-\infty} (e^{-t})^{s-1} f(e^{-t}) (-e^{-t} dt) = \int_{-\infty}^\infty e^{-st} g(t) dt.$$Writing $s = \gamma + i\xi$, this becomes the standard Fourier transform of $g(t)e^{-\gamma t}$ with respect to the frequency variable $\xi$. Thus, taking $s = \gamma + i\xi$, we recover the Fourier transform along the vertical line $\text{Re}(s) = \gamma$.

\section{Fractional Sobolev inequalities and Gagliardo-type estimates}\label{Section : 3}

In this section, we recall the fractional Sobolev inequalities. We exploit the scaling properties of these inequalities on bounded Lipschitz domains. In particular, when considering an appropriate weighted Gagliardo seminorm, the constant in the fractional Sobolev inequality remains independent of the scaling parameter. This scaling-invariant property plays a crucial role in the proof of our main results. Furthermore, we establish some basic inequalities for functions that will be essential in our analysis. Also note that fractional Poincar\'e inequality for measures more general than L\'evy measures was proved in \cite{MRS} for $p=2.$

\smallskip

The next lemma gives a scaling property of the fractional Sobolev inequality on bounded Lipschitz domains with scaling parameter $\lambda>0$. In particular, it provides a constant that depends explicitly on $\lambda>0$. 

\begin{lemma}\label{Lemma: fractional Sobolev inequality}
Let $\Omega$ be a bounded Lipschitz domain in $\mathbb{R}^{N}$, and let $sp<N$. Define $\Omega_{\lambda}= \{ \lambda x : x \in \Omega \}$. Then for all $u \in W^{s,p}(\Omega_{\lambda})$, there exists a constant $C=C(N,s,p, \Omega)>0$ such that 
\begin{align*}
    \left( \fint_{\Omega_{\lambda}} |u(x)-(u)_{\Omega_{\lambda}}|^{p^{*}_{s}} \, dx \right)^{\frac{1}{p^{*}_{s}}} \leq C \left( \lambda^{sp-N} \int_{\Omega_{\lambda}} \int_{\Omega_{\lambda}} \frac{|u(x) - u(y)|^{p}}{|x-y|^{N+sp}}  \, dx \, dy \right)^{\frac{1}{p}},
\end{align*}
where $p^{*}_{s} = \frac{Np}{N-sp}$.
\end{lemma}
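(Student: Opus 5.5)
The plan is to reduce the statement to the case $\lambda=1$, i.e. to the fractional Sobolev--Poincar\'e inequality on the fixed bounded Lipschitz domain $\Omega$, and then transport it to $\Omega_\lambda$ by dilation, tracking the powers of $\lambda$.

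\textbf{Step 1: the inequality on $\Omega$.} For $v\in W^{s,p}(\Omega)$ we want
\[
\|v-(v)_\Omega\|_{L^{p^*_s}(\Omega)}\le C_0\,[v]_{W^{s,p}(\Omega)}, \qquad C_0=C_0(N,s,p,\Omega).
\]
This follows from two standard facts.
\begin{itemize}
\item Since $\Omega$ is a bounded Lipschitz domain, it is a $W^{s,p}$-extension domain. This gives the embedding $\|w\|_{L^{p^*_s}(\Omega)}\le C\|w\|_{W^{s,p}(\Omega)}$ with $p^*_s=Np/(N-sp)$, using $sp<N$.
\item The fractional Poincar\'e inequality $\|v-(v)_\Omega\|_{L^p(\Omega)}\le C[v]_{W^{s,p}(\Omega)}$ holds. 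It can be proved directly by Jensen:
\[
|v(x)-(v)_\Omega|^p\le \fint_\Omega |v(x)-v(y)|^p\,dy \le \frac{(\operatorname{diam}\Omega)^{N+sp}}{|\Omega|}\int_\Omega\frac{|v(x)-v(y)|^p}{|x-y|^{N+sp}}\,dy,
\]
and then integrating in $x$.
\end{itemize}
Applying the embedding to $w=v-(v)_\Omega$, whose seminorm equals $[v]$, yields Step 1.

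\textbf{Step 2: scaling.} Given $u\in W^{s,p}(\Omega_\lambda)$, set $v(x)=u(\lambda x)$ for $x\in\Omega$. Note that $(v)_\Omega=(u)_{\Omega_\lambda}$. The change of variables $x'=\lambda x$, $y'=\lambda y$ gives
\[
[v]^p_{W^{s,p}(\Omega)}=\lambda^{sp-N}[u]^p_{W^{s,p}(\Omega_\lambda)},
\]
since $dx\,dy$ contributes $\lambda^{-2N}$ and $|x-y|^{-(N+sp)}$ contributes $\lambda^{N+sp}$. Likewise the normalized average is invariant:
\[
\fint_\Omega|v-(v)_\Omega|^{p^*_s}\,dx=\fint_{\Omega_\lambda}|u-(u)_{\Omega_\lambda}|^{p^*_s}\,dx.
\]

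\textbf{Step 3: combine.} Apply Step 1 to $v$ and divide by $|\Omega|^{1/p^*_s}$. This gives
\[
\Big(\fint_{\Omega_\lambda}|u-(u)_{\Omega_\lambda}|^{p^*_s}\Big)^{1/p^*_s}\le C_0|\Omega|^{-1/p^*_s}\big(\lambda^{sp-N}[u]^p_{W^{s,p}(\Omega_\lambda)}\big)^{1/p},
\]
which is the claim with $C=C_0|\Omega|^{-1/p^*_s}$, independent of $\lambda$.

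The only nontrivial input is Step 1, specifically the Sobolev embedding on a Lipschitz domain, which relies on the extension property. I would cite it from the standard references (e.g. Di Nezza--Palatucci--Valdinoci) rather than reprove it. Steps 2 and 3 are routine bookkeeping; the one point to check is that the averaged, not plain, $L^{p^*_s}$ norm on the left is what makes the $\lambda$-powers match exactly.
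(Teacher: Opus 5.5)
Your proposal is correct and follows the paper's proof essentially step for step: the Sobolev embedding on the bounded Lipschitz domain (Di Nezza--Palatucci--Valdinoci, Theorem 6.7) is applied to $u-(u)_{\Omega}$ together with the fractional Poincar\'e inequality, and the dilation $v(x)=u(\lambda x)$ then produces the factor $\lambda^{sp-N}$ while leaving the averaged $L^{p^*_s}$ quantity invariant. The only difference is that you prove the Poincar\'e step directly via Jensen's inequality instead of citing it, which is valid and does not even use connectedness of $\Omega$.
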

    \begin{proof}
  Let  $\Omega$ be a bounded Lipschitz domain in $\mathbb{R}^{N}$. Then from Theorem  $6.7$ of  \cite{di2012hitchhikers}, we have
\begin{equation*}
      \|u\|_{L^{p^{*}_{s}}(\Omega)} \leq C \|u\|_{W^{s,p}(\Omega)},
  \end{equation*}
   where  $C=C(N,p,s, \Omega)$ is a constant. Applying the above inequality with  $u-(u)_{\Omega}$, and using fractional Poincar\'e inequality \cite[Theorem $3.9$]{EdmundsBook2023}, we obtain
\begin{equation*}
       \left(  \fint_{\Omega} |u(x)-(u)_{\Omega}|^{p^{*}_{s} } \, dx \right)^{\frac{1}{p^{*}_{s}}}  \leq C  [u]_{W^{s,p}(\Omega)}  . 
    \end{equation*}
    Let us apply the above inequality to  $u(\lambda x)$ instead of  $u(x)$. This gives
    \begin{equation*}
        \left(  \fint_{\Omega}  \Big|u(\lambda x)-\fint_{\Omega} u(\lambda x) \, dx \Big|^{p^{*}_{s} } \, dx \right)^{\frac{1}{p^{*}_{s}}}  \leq C \left( \int_{\Omega} \int_{\Omega} \frac{|u(\lambda x) - u(\lambda y)|^{p}}{|x-y|^{N+sp}} \, dx \, dy  \right)^{\frac{1}{p}}  . 
    \end{equation*}
    Using the fact $ \fint_{\Omega} u(\lambda x) \, dx = \fint_{\Omega_{\lambda}} u(x) \, dx$, we have
    \begin{equation*}
       \left(  \fint_{\Omega} |u(\lambda x)-(u)_{\Omega_{\lambda}}|^{p^{*}_{s}} \, dx \right)^{\frac{1}{p^{*}_{s}}}  \leq C \left( \int_{\Omega} \int_{\Omega} \frac{|u(\lambda x) - u(\lambda y)|^{p}}{|x-y|^{N+sp}} \, dx \, dy  \right)^{\frac{1}{p}}  . 
    \end{equation*}
    By changing the variables \(X = \lambda x\) and \(Y = \lambda y\), we obtain
    \begin{equation*}
         \left( \fint_{\Omega_{\lambda}} |u(x)-(u)_{\Omega_{\lambda}}|^{p^{*}_{s}} \, dx \right)^{\frac{1}{p^{*}_{s}}}  \leq C \left( \lambda^{sp-N} [u]^{p}_{W^{s,p}(\Omega_{\lambda})} \right)^{\frac{1}{p}}  . 
    \end{equation*}
    This finishes the proof of the lemma.
\end{proof}

\begin{rem}\label{Remark: Scale invariant inequality}
    In the above lemma, if we take $\Omega = \{ x \in \mathbb{R}^{N} : 1<|x|<2 \}$, then for any $x,y \in \Omega_{\lambda}$, we have $|x|^{\frac{N-sp}{2}} \leq (2\lambda)^{\frac{N-sp}{2}}$ and $|y|^{\frac{N-sp}{2}} \leq (2\lambda)^{\frac{N-sp}{2}}$. Therefore, from the above fractional Sobolev inequality, we obtain
    \begin{equation*}
        \left( \fint_{\Omega_{\lambda}} |u(x)-(u)_{\Omega_{\lambda}}|^{p^{*}_{s}} \, dx \right)^{\frac{1}{p^{*}_{s}}} \leq C \left(  \int_{\Omega_{\lambda}} \int_{\Omega_{\lambda}} \frac{|u(x) - u(y)|^{p}}{|x-y|^{N+sp}}  \, \frac{dx}{|x|^{\frac{N-sp}{2}}} \, \frac{dy}{|y|^{\frac{N-sp}{2}}} \right)^{\frac{1}{p}},
    \end{equation*}
    where the constant $C$ does not depend on $\lambda$. Hence, we call this inequality a \textit{scale-invariant weighted fractional Sobolev inequality}. It is called ``weighted" because the right-hand side involves a weighted Gagliardo seminorm, which appears as a remainder term in the fractional Hardy inequality for $p \geq 2$, as given in \eqref{Fractional Hardy with a remainder for p>2}. A similar type of scale-invariant inequality also holds for another weighted Gagliardo seminorm, which appears as a remainder term in the fractional Hardy inequality when $1<p<2$, as given in \eqref{Fractional Hardy with a remainder for p<2}.
\end{rem}

\smallskip

The following lemma establishes a basic inequality for the Gagliardo seminorm involving a weight $\mathcal{K}$. Although the presence of the weight is not essential for the proof, we include it here since this more general formulation will be useful in the proof of our main results. The lemma shows that the Gagliardo seminorm is subadditive with respect to the decomposition of a function into its positive and negative parts.

\begin{lemma}\label{Lemma: Positive and Negative part}
Let $u:\mathbb{R}^{N} \to \mathbb{R}$ be a function and define
\begin{equation*}
    u_{+}(x)=\max\{u(x),0\}, \qquad u_{-}(x)=\max\{-u(x),0\}.
\end{equation*}
Then for $0<s<1$, $1 \leq p < \infty$ and any function $\mathcal{K}$ on $\mathbb{R}^{N} \times \mathbb{R}^{N}$, we have
\begin{align*}
    \int_{\mathbb{R}^{N}} \int_{\mathbb{R}^{N}}  \frac{|u_{+}(x) - u_{+}(y)|^{p}}{|x-y|^{N+sp}} & \mathcal{K}(x,y) \, dx \, dy  +  \int_{\mathbb{R}^{N}} \int_{\mathbb{R}^{N}} \frac{|u_{-}(x) - u_{-}(y)|^{p}}{|x-y|^{N+sp}} \mathcal{K}(x,y) \, dx \, dy \\ & \leq \int_{\mathbb{R}^{N}} \int_{\mathbb{R}^{N}} \frac{|u(x) - u(y)|^{p}}{|x-y|^{N+sp}} \mathcal{K}(x,y) \, dx \, dy.
\end{align*}
\end{lemma}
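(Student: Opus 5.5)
The inequality will follow from a pointwise bound on the integrand, after which we integrate against the (implicitly nonnegative) weight $\mathcal{K}(x,y)|x-y|^{-N-sp}$. Since $|x-y|^{-N-sp}$ is positive off the diagonal, it suffices to prove that for all real numbers $a=u(x)$ and $b=u(y)$,
\begin{equation*}
|a_+-b_+|^p+|a_--b_-|^p\le |a-b|^p ,
\end{equation*}
and then multiply by $\mathcal{K}(x,y)|x-y|^{-N-sp}$ and integrate over $\mathbb{R}^N\times\mathbb{R}^N$. We assume, as in all later uses, that $\mathcal{K}\ge 0$; for instance $\mathcal{K}\equiv 1$ or $\mathcal{K}(x,y)=|x|^{-\frac{N-sp}{2}}|y|^{-\frac{N-sp}{2}}$.

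We prove the pointwise bound by cases on the signs of $a$ and $b$.

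\emph{Same sign.} If $a,b\ge 0$, then $a_-=b_-=0$ and $a_+-b_+=a-b$, so equality holds. The case $a,b\le 0$ is symmetric.

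\emph{Opposite signs.} Suppose $a>0>b$; the other case follows by swapping the roles of $a$ and $b$. Then
\begin{equation*}
|a_+-b_+|=a,\qquad |a_--b_-|=|b|,\qquad |a-b|=a+|b|.
\end{equation*}
So the claim reduces to the superadditivity $a^p+|b|^p\le (a+|b|)^p$ for $p\ge1$ and nonnegative $a,|b|$. This follows from convexity of $t\mapsto t^p$ together with its vanishing at $0$. Equivalently, with $\theta=a/(a+|b|)\in[0,1]$ we have $\theta^p+(1-\theta)^p\le \theta+(1-\theta)=1$.

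The only point needing care is the opposite-sign case, where superadditivity of $t^p$ for $p\ge1$ is used. Beyond that there is no real obstacle, apart from noting that measurability and possibly infinite integrals cause no trouble, since both sides are integrals of nonnegative functions.
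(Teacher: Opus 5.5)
Your proof is correct and follows the paper's route: both reduce to the pointwise inequality $|a_+-b_+|^p+|a_--b_-|^p\le|a-b|^p$, with equality in the same-sign cases and superadditivity of $t\mapsto t^p$ ($p\ge1$) in the opposite-sign case. Your explicit assumption $\mathcal{K}\ge 0$ is in fact necessary: the statement's ``any function $\mathcal{K}$'' is too broad, and the paper's proof silently uses nonnegativity when it integrates the pointwise bound.
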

\begin{proof}
Note that 
\begin{align*}
    \int_{\mathbb{R}^{N}} \int_{\mathbb{R}^{N}}  &\frac{|u_{+}(x) - u_{+}(y)|^{p}}{|x-y|^{N+sp}} \mathcal{K}(x,y) \, dx \, dy  +  \int_{\mathbb{R}^{N}} \int_{\mathbb{R}^{N}} \frac{|u_{-}(x) - u_{-}(y)|^{p}}{|x-y|^{N+sp}} \mathcal{K}(x,y) \, dx \, dy \\ & = \int_{\mathbb{R}^{N}} \int_{\mathbb{R}^{N}} \frac{|u_{+}(x) - u_{+}(y)|^{p} + |u_{-}(x) - u_{-}(y)|^{p}}{|x-y|^{N+sp}} \mathcal{K}(x,y) \, dx \, dy.
\end{align*}
Therefore, it is sufficient to prove the pointwise inequality: for all $x,y \in \mathbb{R}^N$,
\begin{equation}\label{pointwise estimate}
    |u_{+}(x)-u_{+}(y)|^{p} + |u_{-}(x)-u_{-}(y)|^{p} \leq |u(x)-u(y)|^{p}.
\end{equation}
If $u(x), \, u(y)\geq 0$ or $u(x), \, u(y)\leq 0$ or $u(x)=0$ for some $x,y \in \mathbb{R}^{N}$, then
\begin{equation*}
   |u_{+}(x)-u_{+}(y)|^{p} + |u_{-}(x)-u_{-}(y)|^{p}
=
|u(x)-u(y)|^{p}. 
\end{equation*}
Now, assume $u(x) >0$ and $u(y)<0$ for some $x,y \in \mathbb{R}^{N}$. Then $u_{+}(x) = u(x)$ and $u_{-}(y) = -u(y)$. Then,
\begin{equation*}
    |u_{+}(x)-u_{+}(y)|^{p} + |u_{-}(x)-u_{-}(y)|^{p} = |u_{+}(x)|^{p} + |u_{-}(y)|^{p}.
\end{equation*}
But 
\begin{equation*}
    |u(x)-u(y)|^{p} = |u_{+}(x) + u_{-}(y)|^{p} \geq |u_{+}(x)|^{p} + |u_{-}(y)|^{p}.
\end{equation*}
Therefore, combining the above two inequalities, we obtain
\begin{equation*}
    |u_{+}(x)-u_{+}(y)|^{p} + |u_{-}(x)-u_{-}(y)|^{p} \leq |u(x)-u(y)|^{p}.
\end{equation*}
Hence, in all the cases, the pointwise inequality \eqref{pointwise estimate} holds for all $x,y \in \mathbb{R}^{N}$. This proves the lemma.
\end{proof}

The following lemma establishes a basic inequality involving the averages over two disjoint sets. It is particularly useful for connecting two sets that are far apart by means of iterative interactions through intermediate disjoint sets.

\begin{lemma}\label{Lemma : on two disjoint set}
    Let $E$ and $F$ be two disjoint sets in $\mathbb{R}^{N}$. Then there exists a constant $C= C(N,s,p) > 0$ such that
    \begin{equation}
        |(u)_{E} - (u)_{F}|^{p^{*}_{s}} \leq C \frac{|E \cup F|}{\min \{ |E|, |F| \} }  \fint_{E \cup F} |u(x)-(u)_{E \cup F}|^{p^{*}_{s}} \, dx  . 
    \end{equation}
\end{lemma}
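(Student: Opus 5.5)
The plan is to compare each of the two averages with the average over the union $G:=E\cup F$. I then control each difference by Jensen's (Hölder's) inequality. Set $q:=p^{*}_{s}\ge 1$ and $c:=(u)_{G}$. By the triangle inequality and the elementary convexity bound $(a+b)^{q}\le 2^{q-1}(a^{q}+b^{q})$, we have
\[
|(u)_{E}-(u)_{F}|^{q}\le 2^{q-1}\Big(|(u)_{E}-c|^{q}+|(u)_{F}-c|^{q}\Big).
\]
It therefore suffices to bound each term on the right separately.

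For the term involving $E$, I write $(u)_{E}-c=\fint_{E}(u(x)-c)\,dx$ and apply Jensen's inequality for the convex function $t\mapsto|t|^{q}$ with respect to the probability measure $dx/|E|$ on $E$:
\[
|(u)_{E}-c|^{q}\le \fint_{E}|u(x)-c|^{q}\,dx=\frac{1}{|E|}\int_{E}|u-c|^{q}\,dx.
\]
Next I enlarge the domain of integration from $E$ to $G$, which is allowed because the integrand is nonnegative. This gives
\[
\frac{1}{|E|}\int_{E}|u-c|^{q}\,dx\le \frac{1}{|E|}\int_{G}|u-c|^{q}\,dx=\frac{|G|}{|E|}\fint_{G}|u-c|^{q}\,dx\le\frac{|E\cup F|}{\min\{|E|,|F|\}}\fint_{E\cup F}|u-(u)_{E\cup F}|^{q}\,dx.
\]
The same argument with $F$ in place of $E$ gives the identical bound for the term involving $F$.

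Adding the two bounds yields the claim with $C=2^{q}=2^{p^{*}_{s}}$, which depends only on $N,s,p$. Note that disjointness is not actually needed for this argument, and in the paper's application it only serves to make $|E\cup F|=|E|+|F|$ explicit. The standing assumption is that $E$ and $F$ have positive finite measure and that $u\in L^{p^{*}_{s}}(E\cup F)$, so that all averages are defined; otherwise the right-hand side is infinite and there is nothing to prove. There is no real obstacle here. The only point to watch is normalizing Jensen's inequality on $E$ before enlarging to $E\cup F$, since this is exactly what produces the ratio $|E\cup F|/\min\{|E|,|F|\}$.
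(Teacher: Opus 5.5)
Your proof is correct and follows the paper's argument essentially verbatim: you insert the average over $E\cup F$, apply the convexity bound, use Jensen/H\"older on each of $E$ and $F$, and enlarge to $E\cup F$. The only cosmetic difference is that the paper uses disjointness to write $\int_E+\int_F=\int_{E\cup F}$, saving a factor of $2$ in the constant, whereas you enlarge each integral separately, which is why your version does not need disjointness.
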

\begin{proof}
Let us consider  $|(u)_{E}-(u)_{F}|^{p^{*}_{s}}$, we have
\begin{align*}
    |(u)_{E}-(u)_{F}|^{p^{*}_{s}} & = |(u)_{E} - (u)_{E \cup F} - (u)_{F} + (u)_{E \cup F}|^{p^{*}_{s}} \\ &
        \leq C|(u)_{E} - (u)_{E \cup F}|^{p^{*}_{s}} +  C | (u)_{F} + (u)_{E \cup F}|^{p^{*}_{s}} \\ &
        = C \left| \fint_{E} \left\{ u(x) - (u)_{E \cup F} \right\}  \, dx  \right|^{p^{*}_{s}} + C \left| \fint_{F} \left\{ u(x) - (u)_{E \cup F} \right\} \, dx \right|^{p^{*}_{s}}.
\end{align*}
    By using H$\ddot{\text{o}}$lder's inequality with  $\frac{1}{p^{*}_{s}} + \frac{1}{(p^{*}_{s})' } = 1$, we have
    \begin{align*}
         |(u)_{E}-(u)_{F}|^{p^{*}_{s}} & \leq  C \fint_{E} |u(x) - (u)_{E \cup F} |^{p^{*}_{s}} \,  dx + C \fint_{F} | u(x) - (u)_{E \cup F} |^{p^{*}_{s}} \,  dx \\ &
       \leq  \frac{C}{\min \{ |E|, |F| \} } \int_{E \cup F} |u(x) - (u)_{E \cup F} |^{p^{*}_{s}} \,  dx \\ & 
       = C \frac{|E \cup F|}{\min \{ |E|, |F| \}} \fint_{E \cup F} |u(x) - (u)_{E \cup F} |^{p^{*}_{s}}  \, dx .
    \end{align*}
    This completes the proof of the lemma.
\end{proof}

 \section{Proof of Theorem \ref{Theorem 1} and Theorem \ref{Theorem 2}}\label{Section : 4}

In this section, we prove Theorem \ref{Theorem 1} and Theorem \ref{Theorem 2}. We begin by employing the fractional Hardy inequality with a remainder for $p \geq 2$, established in \cite[Theorem $1.2$]{Frank2008}, and for $1<p<2$, established in \cite[Theorem $2$]{Dyda2024} and reduce the proof to estimating the remainder term. 

\smallskip

The proof proceeds by decomposing $\mathbb{R}^{N}$ into two sets $A$ and $A^{c}$, where $A$ is a bounded open set to be specified later. On $A$, we further decompose the domain into a suitable family of subsets and apply the scale-invariant weighted fractional Sobolev inequality (see Remark \ref{Remark: Scale invariant inequality}). A summation argument then yields control of the Marcinkiewicz (weak-$L^{p^{*}_{s}}$) norm on $A$.
On the complement $A^{c}$, we establish appropriate decay estimates and use embedding properties of the Marcinkiewicz space to control the corresponding norm. Combining these estimates completes the proof of the theorem.

\smallskip

\subsection{The case \texorpdfstring{$p \geq 2$:}{p geq 2:} Proof of Theorem~\ref{Theorem 1}} Frank and Seiringer in \cite{Frank2008} established the fractional Hardy inequality with a remainder for $p \geq 2$. The fractional Hardy inequality with a remainder, proved in \cite[Theorem 1.2]{Frank2008}, states that

\begin{align}\label{Fractional Hardy with a remainder for p>2}
\int_{\mathbb{R}^{N}} \int_{\mathbb{R}^{N}}  \frac{|u(x)-u(y)|^{p}}{|x-y|^{N+sp}} \, dx \, dy & - \mathcal{C}_{N,s,p} \int_{\mathbb{R}^{N}} \frac{|u(x)|^{p}}{|x|^{sp}} \, dx \nonumber \\ & \geq c_{p} \int_{\mathbb{R}^{N}} \int_{\mathbb{R}^{N}} \frac{|v(x)-v(y)|^{p}}{|x-y|^{N+sp}} \, \frac{dx}{|x|^{ \frac{N-sp}{2}}} \, \frac{dy}{|y|^{\frac{N-sp}{2}}},
\end{align}
where $v(x) = u(x) \omega^{-1}(x) = u(x)|x|^{\frac{N-sp}{p}}$ and $0<c_p \leq 1$ is given by
\begin{equation}\label{Defn: c_p}
    c_p := \min_{0 < \tau < 1/2} \left( (1-\tau)^{p} -\tau^{p} + p \tau^{p-1} \right).
\end{equation}
If $p=2$, then the above inequality is an equality with $c_{2} = 1$. 

\smallskip

Let us first assume $u \in C_c^{1}(\mathbb{R}^{N})$ and assume that $u \geq 0$. Without loss of generality, we normalize $u$ so that
\begin{equation*}
   \| u \|_{L^{p^{*}_{s},p}(\mathbb{R}^{N})} = 1.
\end{equation*}
We denote the remainder term by
\begin{equation*}
  \varepsilon (u) :=  \int_{\mathbb{R}^{N}} \int_{\mathbb{R}^{N}} \frac{|v(x)-v(y)|^{p}}{|x-y|^{N+sp}} \, \frac{dx}{|x|^{ \frac{N-sp}{2}}} \, \frac{dy}{|y|^{\frac{N-sp}{2}}}.
\end{equation*}
Assume first that $\varepsilon(u) > 1$. Using the normalization $\| u \|_{L^{p^{*}_{s},p}(\mathbb{R}^{N})} = 1$, and the embedding $L^{p^{*}_{s},p}(\mathbb{R}^N) \hookrightarrow L^{p^{*}_{s},\infty}(\mathbb{R}^N)$, we obtain
\begin{equation*}
   \varepsilon(u)^{\frac{1}{2p}} >1 =  \| u \|_{L^{p^{*}_{s},p}(\mathbb{R}^{N})} \geq C \| u \|_{L^{p^{*}_{s},\infty} (\mathbb{R}^{N})} \geq C \inf_{a \geq 0}  \| u - \omega_{a} \|_{L^{p^{*}_{s},\infty} (\mathbb{R}^{N})},
\end{equation*}
where $\omega_{a}(x) = a \, |x|^{-\frac{N-sp}{p}}$ and $C= C(N,s,p)>0$. Hence,
\begin{equation}\label{ineqn100}
  \inf_{a \geq 0}  \| u - \omega_{a} \|^{2p}_{L^{p^{*}_{s},\infty} (\mathbb{R}^{N})} \leq C  \varepsilon(u).
\end{equation}
Therefore, it suffices to restrict to the case $\varepsilon(u) \leq 1$. Fix $\gamma > 0$, to be chosen later, and define the set
\begin{equation*}
    A = \{ x \in \mathbb{R}^{N} : v(x) > \varepsilon(u)^{\gamma} \}.
\end{equation*}
Since $u \in C^{1}_{c}(\mathbb{R}^{N})$, it follows that the set $A$ is open and bounded. 

For each $k \in \mathbb{Z}$, define
\begin{equation}\label{Defn: A_{k}}
    A_{k} := \{ x \in \mathbb{R}^{N} : 2^{k} < |x| \leq 2^{k+1} \}.
\end{equation}
Since $\lim_{|x| \to 0} v(x) = \lim_{|x| \to 0} u(x)|x|^{\frac{N-sp}{p}} = 0$, consider the smallest $k_{0} \in \mathbb{Z}$ such that 
\begin{equation*}
    v(x) \leq \varepsilon(u)^{\gamma} \quad \forall \ x \in A_{k_{0}}  \quad \text{and} \quad A \cap A_{k_{0}+1} \neq \phi \quad \Rightarrow \quad (v)_{A_{k_{0}}} \leq \varepsilon(u)^{\gamma}.
\end{equation*}
Therefore, using $0< v(x) - \varepsilon(u)^{\gamma} \leq v(x) - (v)_{A_{k_{0}}}$, for all $x \in A$, we obtain
\begin{align}\label{Ineq01}
    \int_{A} |v(x) - \varepsilon(u)^{\gamma}|^{p^{*}_{s}} \, \frac{dx}{|x|^{N}} & \leq \int_{A} |v(x) - (v)_{A_{k_{0}}}|^{p^{*}_{s}} \, \frac{dx}{|x|^{N}} \nonumber \\ 
    & = \sum_{\substack{k \in \mathbb{Z} \\ A \cap A_{k} \neq \phi}} \int_{A \cap A_{k}} |v(x) - (v)_{A_{k_{0}}}|^{p^{*}_{s}} \, \frac{dx}{|x|^{N}}. 
\end{align}
For each $k \in \mathbb{Z}$ such that $A \cap A_{k} \neq \phi$, we estimate
\begin{align}\label{ineq03}
  \int_{A \cap A_{k}} |v(x) - (v)_{A_{k_{0}}}|^{p^{*}_{s}} \, \frac{dx}{|x|^{N}} & \leq 2^{p^{*}_{s}-1} \int_{A \cap A_{k}} |v(x) - (v)_{A_{k}}|^{p^{*}_{s}} \, \frac{dx}{|x|^{N}} \nonumber \\ & \quad + 2^{p^{*}_{s}-1} |(v)_{A_{k}} - (v)_{A_{k_{0}}}|^{p^{*}_{s}} \int_{A \cap A_{k}}  \, \frac{dx}{|x|^{N}} \nonumber \\ & \leq 2^{p^{*}_{s}-1} \int_{A_{k}} |v(x) - (v)_{A_{k}}|^{p^{*}_{s}} \, \frac{dx}{|x|^{N}} \nonumber \\ & \quad + 2^{p^{*}_{s}-1} |(v)_{A_{k}} - (v)_{A_{k_{0}}}|^{p^{*}_{s}} \int_{A \cap A_{k}}  \, \frac{dx}{|x|^{N}}  .
\end{align}
Applying Lemma \ref{Lemma: fractional Sobolev inequality} with $\Omega = \{ x \in \mathbb{R}^{N} : 1<|x|<2 \}$ and $\lambda = 2^{k}$, we obtain
\begin{align*}
    \int_{A_{k}} |v(x) - (v)_{A_{k}}|^{p^{*}_{s}} \, \frac{dx}{|x|^{N}} & \leq C \fint_{A_{k}} |v(x) - (v)_{A_{k}}|^{p^{*}_{s}} \, dx \\ & \leq C \left( 2^{k(sp-N)} \int_{A_{k}} \int_{A_{k}} \frac{|v(x)-v(y)|^{p}}{|x-y|^{N+sp}} \, dx \, dy  \right)^{\frac{p^{*}_{s}}{p}}.
\end{align*}
Here the constant $C:=C(N,s,p, \Omega)$ does not depend on $k.$ For all $x,y \in A_k$, we have $|x| \leq 2^{k+1}$, $|y| \leq 2^{k+1}$, and hence  $|x|^{\frac{N-sp}{2}} |y|^{\frac{N-sp}{2}} \leq 2^{(k+1)(N-sp)}$. Therefore, we obtain
\begin{align*}
     \int_{A_{k}} |v(x) - (v)_{A_{k}}|^{p^{*}_{s}} \, \frac{dx}{|x|^{N}} \leq C \left( \int_{A_{k}} \int_{A_{k}} \frac{|v(x)-v(y)|^{p}}{|x-y|^{N+sp}} \, \frac{dx}{|x|^{\frac{N-sp}{2}}} \, \frac{dy}{|y|^{\frac{N-sp}{2}}}  \right)^{\frac{p^{*}_{s}}{p}} ,
\end{align*}
where $C= C(N,s,p)>0$. Substituting the above estimate into \eqref{ineq03}, we obtain
\begin{align}\label{ineq04}
    \int_{A \cap A_{k}} |v(x) - (v)_{A_{k_{0}}}|^{p^{*}_{s}} \, \frac{dx}{|x|^{N}} & \leq C \left( \int_{A_{k}} \int_{A_{k}} \frac{|v(x)-v(y)|^{p}}{|x-y|^{N+sp}} \, \frac{dx}{|x|^{\frac{N-sp}{2}}} \, \frac{dy}{|y|^{\frac{N-sp}{2}}}  \right)^{\frac{p^{*}_{s}}{p}} \nonumber \\ & \quad + 2^{p^{*}_{s}-1}|(v)_{A_{k}} - (v)_{A_{k_{0}}}|^{p^{*}_{s}}  \int_{A \cap A_{k}}  \, \frac{dx}{|x|^{N}}  .
\end{align}
Now, applying Lemma \ref{Lemma : on two disjoint set} with $E = A_{\ell}$ and $F = A_{\ell+1}$, for $k_{0} \leq \ell \leq k-1$, we obtain
\begin{align*}
  |(v)_{A_{k}} - (v)_{A_{k_{0}}}|^{p^{*}_{s}} & \leq 2^{p^{*}_{s}-1} \sum_{\ell = k_{0}}^{k-1}  |(v)_{A_{\ell}} - (v)_{A_{\ell+1}}|^{p^{*}_{s}} \\ & \leq C \sum_{\ell = k_{0}}^{k-1} \fint_{A_{\ell} \cup A_{\ell+1}}  |v(x) - (v)_{A_{\ell} \cup A_{\ell+1}}|^{p^{*}_{s}} \, dx, 
\end{align*}
where $C=C(N,s,p)>0$. Next, we apply Lemma \ref{Lemma: fractional Sobolev inequality} with $\Omega = \{x \in \mathbb{R}^{N} : 1 < |x| < 4\}$ and scaling parameter $\lambda = 2^{\ell}$ and using $|x|^{\frac{N-sp}{2}}|y|^{\frac{N-sp}{2}} \leq 2^{(\ell+2)(N-sp)}$ for all $x,y \in A_{\ell} \cup A_{\ell+1}$, we obtain
\begin{align*}
    |(v)_{A_{k}} - (v)_{A_{k_{0}}}|^{p^{*}_{s}} & \leq C \sum_{\ell = k_{0}}^{k-1} \left( 2^{\ell (sp-N)} \int_{A_{\ell} \cup A_{\ell+1}} \int_{A_{\ell} \cup A_{\ell+1}} \frac{|v(x)-v(y)|^{p}}{|x-y|^{N+sp}} \, dx \, dy \right)^{\frac{p^{*}_{s}}{p}} \\ & \leq C \sum_{\ell = k_{0}}^{k-1} \left( \int_{A_{\ell} \cup A_{\ell+1}} \int_{A_{\ell} \cup A_{\ell+1}} \frac{|v(x)-v(y)|^{p}}{|x-y|^{N+sp}} \, \frac{dx}{|x|^{\frac{N-sp}{2}}} \, \frac{dy}{|y|^{\frac{N-sp}{2}}} \right)^{\frac{p^{*}_{s}}{p}} \\ &  \leq C \left( \int_{\mathbb{R}^{N}} \int_{\mathbb{R}^{N}} \frac{|v(x)-v(y)|^{p}}{|x-y|^{N+sp}} \, \frac{dx}{|x|^{\frac{N-sp}{2}}} \, \frac{dy}{|y|^{\frac{N-sp}{2}}} \right)^{\frac{p^{*}_{s}}{p}},
\end{align*}
where $C = C(N,s,p)>0$. Also, using the fact that for any $x \in A \cap A_{k}$, we have $v(x) > \varepsilon(u)^\gamma$, it follows that
\begin{equation*}
    \int_{A \cap A_{k}}  \, \frac{dx}{|x|^{N}} \leq \frac{1}{\varepsilon(u)^{\gamma p}} \int_{A \cap A_{k}} \frac{|u(x)|^{p}}{|x|^{sp}} \, dx.
\end{equation*}
Therefore, combining the above two estimates and using the definition of $\varepsilon(u)$, we obtain
\begin{align*}
    |(v)_{A_{k}} & -  (v)_{A_{k_{0}}}|^{p^{*}_{s}}  \int_{A \cap A_{k}}  \, \frac{dx}{|x|^{N}} \\ & \leq C \left( \int_{\mathbb{R}^{N}} \int_{\mathbb{R}^{N}} \frac{|v(x)-v(y)|^{p}}{|x-y|^{N+sp}} \, \frac{dx}{|x|^{\frac{N-sp}{2}}} \, \frac{dy}{|y|^{\frac{N-sp}{2}}} \right)^{\frac{p^{*}_{s}}{p}}   \frac{1}{\varepsilon(u)^{\gamma p}} \int_{A \cap A_{k}} \frac{|u(x)|^{p}}{|x|^{sp}} \, dx \\ & \leq C \varepsilon(u)^{\frac{p^{*}_{s}}{p} - \gamma p} \int_{A \cap A_{k}} \frac{|u(x)|^{p}}{|x|^{sp}} \, dx.
\end{align*}
Substituting the above estimate into \eqref{ineq04}, we obtain
\begin{align*}
\int_{A \cap A_{k}} |v(x) - (v)_{A_{k_{0}}}|^{p^{*}_{s}} \, \frac{dx}{|x|^{N}} & \leq C \left( \int_{A_{k}} \int_{A_{k}} \frac{|v(x)-v(y)|^{p}}{|x-y|^{N+sp}} \, \frac{dx}{|x|^{\frac{N-sp}{2}}} \, \frac{dy}{|y|^{\frac{N-sp}{2}}}  \right)^{\frac{p^{*}_{s}}{p}} \nonumber \\ & \quad +  C \varepsilon(u)^{\frac{p^{*}_{s}}{p} - \gamma p} \int_{A \cap A_{k}} \frac{|u(x)|^{p}}{|x|^{sp}} \, dx.
\end{align*}
Therefore, from \eqref{Ineq01}, the estimates above, and the definition of $\varepsilon(u)$, together with Lemma \ref{Lemma: Hardy potential and Lorentz} and the normalization $\| u \|_{L^{p^{*}_{s},p}(\mathbb{R}^{N})} = 1$, we obtain
\begin{align*}
    \int_{A} |v(x) - \varepsilon(u)^{\gamma}|^{p^{*}_{s}} \, \frac{dx}{|x|^{N}} & \leq \sum_{\substack{k \in \mathbb{Z} \\ A \cap A_{k} \neq \phi}} \int_{A \cap A_{k}} |v(x) - (v)_{A_{k_{0}}}|^{p^{*}_{s}} \, \frac{dx}{|x|^{N}} \\ & \leq C \varepsilon(u)^{\frac{p^{*}_{s}}{p}} + C \varepsilon(u)^{\frac{p^{*}_{s}}{p} - \gamma p} \int_{\mathbb{R}^{N}} \frac{|u(x)|^{p}}{|x|^{sp}} \, dx \\ & \leq C \left(\varepsilon(u)^{\frac{p^{*}_{s}}{p}} +  \varepsilon(u)^{\frac{p^{*}_{s}}{p} - \gamma p} \right),
\end{align*}
where $C=C(N,s,p)>0$. Choose $\gamma = \frac{p^{*}_{s}}{2 p^{2}}$.  Then, using the assumption $\varepsilon(u) \leq 1$, we obtain
\begin{align*}
    \int_{A} |v(x) - \varepsilon(u)^{\gamma}|^{p^{*}_{s}} \, \frac{dx}{|x|^{N}} & \leq C \left( \varepsilon(u)^{\frac{p^{*}_{s}}{p}} + \varepsilon(u)^{\frac{p^{*}_{s}}{2p}} \right) \\ & \leq C \left( \varepsilon(u)^{\frac{p^{*}_{s}}{2p}} + \varepsilon(u)^{\frac{p^{*}_{s}}{2p} }  \right) \leq C \varepsilon(u)^{\frac{p^{*}_{s}}{2p}} .
\end{align*}
Therefore, using $\omega(x) = |x|^{- \frac{N-sp}{p}}$, we obtain using Lorentz-Embedding, 
\begin{align*}
    \| u - \varepsilon(u)^{\gamma} \omega \|_{L^{p^{*}_{s}, \infty}(A)} & \leq \left( \int_{A} | u(x) - \varepsilon(u)^{\gamma} \omega(x) |^{p^{*}_{s}} \, dx \right)^{\frac{1}{p^{*}_{s}}} \\ &  =  \left( \int_{A} |v(x) - \varepsilon(u)^{\gamma}|^{p^{*}_{s}} \, \frac{dx}{|x|^{N}} \right)^{\frac{1}{p^{*}_{s}}}  \leq C \varepsilon(u)^{\frac{1}{2p}},
\end{align*}
where the constant $C:= C(N, s, p)$ is independent of the set $A.$
Consequently,
\begin{equation*}
    \| u - \varepsilon(u)^{\gamma} \omega \|^{2p}_{L^{p^{*}_{s}, \infty}(A)} \leq C \varepsilon(u) .
\end{equation*}
For any $x \in A^{c}$, we have $v(x) \leq \varepsilon(u)^{\gamma}$. In particular,
\begin{equation*}
 \omega(x)^{-1} |u(x)- \varepsilon(u)^{\gamma} \omega(x)| =  |v(x) - \varepsilon(u)^{\gamma}| \leq 2 \varepsilon(u)^{\gamma}, \quad \forall \ x \in A^{c},
\end{equation*}
and therefore
\begin{equation*}
     |u(x)- \varepsilon(u)^{\gamma} \omega(x)| \leq 2 \varepsilon(u)^{\gamma} \omega(x), \quad \forall \ x \in A^{c} .
\end{equation*}
Using that $\omega \in L^{p^{*}_{s}, \infty}(\mathbb{R}^{N})$, the definition of $\gamma$, and the assumption $\varepsilon(u) \leq 1$, we obtain
\begin{equation*}
    \|u- \varepsilon(u)^{\gamma} \omega\|_{L^{p^{*}_{s}, \infty}(A^{c})} \leq C \varepsilon(u)^{\gamma} \| \omega \|_{L^{p^{*}_{s}, \infty}(A^{c})} \leq C \varepsilon(u)^{\frac{p^{*}_{s}}{2p^{2}}} \leq C \varepsilon(u)^{\frac{1}{2p}}. 
\end{equation*}
Consequently,
\begin{equation*}
     \|u- \varepsilon(u)^{\gamma} \omega\|^{2p}_{L^{p^{*}_{s}, \infty}(A^{c})} \leq C \varepsilon(u).
\end{equation*}
Combining this with the estimate on $A$, we conclude that
\begin{align*}
    \|u- \varepsilon(u)^{\gamma} \omega\|_{L^{p^{*}_{s}, \infty}(\mathbb{R}^{N})} & \leq C \left( \|u- \varepsilon(u)^{\gamma} \omega\|_{L^{p^{*}_{s}, \infty}(A)}  + \|u- \varepsilon(u)^{\gamma} \omega\|_{L^{p^{*}_{s}, \infty}(A^{c})} \right) \\ & \leq C \varepsilon(u)^{\frac{1}{2p}}. 
\end{align*}
Therefore, combining the cases $\varepsilon(u) > 1$ and $\varepsilon(u) \leq 1$, and under the normalization $\| u \|_{L^{p^{*}_{s},p}(\mathbb{R}^{N})} =1$, we obtain
\begin{equation*}
   \inf_{a \geq 0} \|u-  \omega_{a}\|^{2p}_{L^{p^{*}_{s}, \infty}(\mathbb{R}^{N})} \leq \|u- \varepsilon(u)^{\gamma} \omega\|^{2p}_{L^{p^{*}_{s}, \infty}(\mathbb{R}^{N})} \leq C \varepsilon(u). 
\end{equation*}
To remove the normalization assumption $\| u \|_{L^{p^{*}_{s},p}(\mathbb{R}^{N})} =1$, we apply the previous estimate to the function
\begin{equation*}
    \widetilde{u} = \frac{u}{\| u \|_{L^{p^{*}_{s},p}(\mathbb{R}^{N})} }.
\end{equation*} 
Since $\widetilde{u}$ satisfies the normalization, we conclude that 
\begin{align*}
  \inf_{a \geq 0} \frac{\| u - \omega_{a}\|^{2p}_{L^{p^{*}_{s},\infty}(\mathbb{R}^{N})}}{ \| u \|^{2p}_{L^{p^{*}_{s},p}(\mathbb{R}^{N})} }  \leq C \frac{\varepsilon(u)}{ \| u \|^{p}_{L^{p^{*}_{s},p}(\mathbb{R}^{N})}  }.   
\end{align*}
Therefore, for $u \geq 0$, we have
\begin{equation}\label{ineq1234}
    \inf_{a \geq 0} \| u - \omega_{a}\|^{2p}_{L^{p^{*}_{s},\infty}(\mathbb{R}^{N})} \leq C \varepsilon(u)\| u \|^{p}_{L^{p^{*}_{s},p}(\mathbb{R}^{N})}  .
\end{equation}
Now, given any $u \in C^{1}_{c}(\mathbb{R}^{N})$, define $u_{+}(x) = \max \{ u(x),0 \} $ and $u_{-}(x) = \max \{ -u(x),0 \}$, the positive and the negative parts of $u$ respectively, so that $u=u_{+}- u_{-}$. Then,
\begin{align*}
  \inf_{a \in \mathbb{R}}  \| u -\omega_{a}\|_{L^{p^{*}_{s},\infty}(\mathbb{R}^{N})} & = \inf_{b,c \geq 0 }  \| u_{+} - u_{-} - \omega_{b-c}\|_{L^{p^{*}_{s},\infty}(\mathbb{R}^{N})}  \\ & \leq \inf_{b \geq 0}  \| u_{+} -\omega_{b}\|_{L^{p^{*}_{s},\infty}(\mathbb{R}^{N})}  + \inf_{c \geq 0}  \| u_{-} -\omega_{c}\|_{L^{p^{*}_{s},\infty}(\mathbb{R}^{N})}.
\end{align*}
Applying the nonnegative functions $u_{+}$ and $u_{-}$ in \eqref{ineq1234} and using Lemma \ref{Lemma: Positive and Negative part}, we obtain 
\begin{align*}
    \sum_{\pm} \inf_{a \geq 0} \| u_{\pm} - \omega_{a} \|_{L^{p^{*}_{s}, \infty}(\mathbb{R}^{N})} & \leq C \sum_{\pm}  \left( \varepsilon(u_{\pm}) \right)^{\frac{1}{2p}} \| u_{\pm} \|^{\frac{1}{2}}_{L^{p^{*}_{s},p}(\mathbb{R}^{N})}   \\ & \leq C \left( \varepsilon(u_{+})+ \varepsilon(u_{-}) \right)^{\frac{1}{2p}}  \| u \|^{\frac{1}{2}}_{L^{p^{*}_{s},p}(\mathbb{R}^{N})}  \\ & \leq C \left( \varepsilon(u)  \right)^{\frac{1}{2p}}  \| u \|^{\frac{1}{2}}_{L^{p^{*}_{s},p}(\mathbb{R}^{N})} .
\end{align*}
Combining the above two inequalities and using Lemma \ref{Lemma: Hardy potential and Lorentz}, we obtain
\begin{align*}
\left( \frac{N}{\mathbb{S}^{N-1}} \right)^{\frac{sp}{N}} \inf_{a \in \mathbb{R}} \frac{ \| u -\omega_{a}\|^{2p}_{L^{p^{*}_{s},\infty}(\mathbb{R}^{N})}}{\| u \|^{2p}_{L^{p^{*}_{s},p}(\mathbb{R}^{N})}} & \int_{\mathbb{R}^{N}} \frac{|u(x)|^{p}}{|x|^{sp}} \, dx \\ & \leq  \inf_{a \in \mathbb{R}} \frac{ \| u -\omega_{a}\|^{2p}_{L^{p^{*}_{s},\infty}(\mathbb{R}^{N})}}{\| u \|^{2p}_{L^{p^{*}_{s},p}(\mathbb{R}^{N})}}  \| u \|^{p}_{L^{p^{*}_{s},p}(\mathbb{R}^{N})}  \leq C \varepsilon(u).
\end{align*}
Therefore, using the definition of $d_{s,p}$ and $\varepsilon(u)$, and applying the fractional Hardy inequality with a remainder for $p \geq 2$ in \eqref{Fractional Hardy with a remainder for p>2}, we obtain
\begin{align*}
\int_{\mathbb{R}^{N}} \int_{\mathbb{R}^{N}}  \frac{|u(x)-u(y)|^{p}}{|x-y|^{N+sp}} \, dx \, dy & - \mathcal{C}_{N,s,p} \int_{\mathbb{R}^{N}} \frac{|u(x)|^{p}}{|x|^{sp}} \, dx \geq C (d_{s,p}(u))^{2p}
    \int_{\mathbb{R}^{N}} \frac{|u(x)|^{p}}{|x|^{sp}} \, dx,
\end{align*}
for all $u \in C^1_c(\mathbb{R}^N).$ We aim to extend the above stability inequality to functions
$u \in \dot W^{s,p}(\mathbb{R}^{N}).$ To prove this, we exploit a density argument. We recall  $\dot{W}^{s,p}(\mathbb{R}^N)$ the closure of $C_c^\infty(\mathbb{R}^N)$ with respect to the Gagliardo seminorm $[\cdot]_{W^{s,p}(\mathbb{R}^N)}$. Indeed by density, there exists \(\{u_n\}\subset C_c^\infty(\mathbb{R}^N)\) such that
\[
[u_n-u]_{W^{s,p}(\mathbb{R}^N)}\to0 .
\]
Using the fractional Sobolev and Lorentz-space embeddings (see \cite[Theorem $6.5$]{di2012hitchhikers} and \cite[Theorem $4.1$]{Frank2008}), and 
\cite[Proposition 1.1.9]{GrafakosBook}, we obtain
\[
u_n\to u
\quad \text{in } 
L^{p_s^*}(\mathbb{R}^N),
\ L^{p_s^*,p}(\mathbb{R}^N),
\ \text{and } 
L^{p_s^*,\infty}(\mathbb{R}^N).
\]
Hence,
\[
d_{s,p}(u_n)\to d_{s,p}(u).
\]
Moreover, up to a subsequence \(u_n\to u\) a.e., and therefore Fatou's lemma yields
\[
\int_{\mathbb{R}^N}\frac{|u(x)|^p}{|x|^{sp}}\,dx
\le
\liminf_{n\to\infty}
\int_{\mathbb{R}^N}\frac{|u_n(x)|^p}{|x|^{sp}}\,dx .
\]
Therefore, passing to the limit in the inequality for $u_n \in C^1_c(\mathbb{R}^N),$ we conclude the Theorem~\ref{Theorem 1}.

\medskip

\subsection{The case \texorpdfstring{$1<p < 2$:}{p < 2} Proof of Theorem~\ref{Theorem 2}} Dyda and Kijaczko in \cite{Dyda2024} extended the fractional Hardy inequality with a reminder to the case $1<p<2$. They established the result in weighted fractional Sobolev space, where the weight function is of the form $|x|^{\alpha} |y|^{\beta}$.  In particular, for the case $\alpha=\beta =0$, they established the following result (unweighted fractional Hardy inequality with a remainder). We assume the notation $a^{\langle b \rangle} = |a|^{b} \, \text{sgn}(a)$.
     \begin{align}\label{Fractional Hardy with a remainder for p<2}
\int_{\mathbb{R}^{N}} \int_{\mathbb{R}^{N}}  & \frac{|u(x)-u(y)|^{p}}{|x-y|^{N+sp}} \, dx \, dy - \mathcal{C}_{N,s,p} \int_{\mathbb{R}^{N}} \frac{|u(x)|^{p}}{|x|^{sp}} \, dx \nonumber \\ & \geq c^{*}_{p} \int_{\mathbb{R}^{N}} \int_{\mathbb{R}^{N}} \frac{(v(x)^{\langle p/2 \rangle}-v(y)^{\langle p/2 \rangle})^{2}}{|x-y|^{N+sp}} \, W(x,y) \, dx \, dy,
    \end{align}
    where $v(x)=|x|^{\frac{N-sp}{p}}u(x)= \omega^{-1}(x) u(x)$, 
    \begin{equation*}
        W(x,y) = \min \left\{ |x|^{-\frac{N-sp}{p}},  |y|^{-\frac{N-sp}{p}} \right\} \max \left\{ |x|^{-\frac{N-sp}{p}},  |y|^{-\frac{N-sp}{p}} \right\}^{p-1}, 
    \end{equation*}
   and the constant 
    \begin{equation*}
        c^{*}_{p}:=  \max \left\{ \frac{p-1}{p}, \frac{p(p-1)}{2} \right\}.
    \end{equation*}
    When $u \geq 0$ or $u$ is complex-valued the constant $c^{*}_{p}$ in the inequality above can be replaced by $p-1$.

    \smallskip

    Let $u \in C_c^{1}(\mathbb{R}^{N})$ and assume that $u \geq 0$. Therefore, $v(x)^{\langle p/2 \rangle}=  (v(x))^{\frac{p}{2}}$ for all $x \in \mathbb{R}^{N}$. Again, without loss of generality, we normalize $u$ so that
\begin{equation*}
    \| u \|_{L^{p^{*}_{s},p}(\mathbb{R}^{N})}   = 1.
\end{equation*}
For $1<p<2$, we define the remainder term by
\begin{equation*}
  \varepsilon_{W} (u) :=  \int_{\mathbb{R}^{N}} \int_{\mathbb{R}^{N}} \frac{((v(x))^{\frac{p}{2}}-(v(y))^{\frac{p}{2}})^{2}}{|x-y|^{N+sp}} \, W(x,y) \, dx \, dy,
\end{equation*}
and let
\begin{equation*}
    W^{p}_{r}(x,y) := \min \left\{ |x|^{-\frac{N-sp}{r}},  |y|^{-\frac{N-sp}{r}} \right\} \max \left\{ |x|^{-\frac{N-sp}{r}},  |y|^{-\frac{N-sp}{r}} \right\}^{r-1}.
\end{equation*}
Note that $W^{p}_{p}=W$. Assume first that $\varepsilon_{W}(u) > 1$. Using the normalization $\| u \|_{L^{p^{*}_{s},p}(\mathbb{R}^{N})} = 1$, and the embedding $L^{p^{*}_{s},p}(\mathbb{R}^N) \hookrightarrow L^{p^{*}_{s},\infty}(\mathbb{R}^N)$, we obtain
\begin{align*}
   \varepsilon_{W}(u)^{\frac{1}{2p}} >1 = \| u \|_{L^{p^{*}_{s},p}(\mathbb{R}^{N})}  \geq C \| u \|_{L^{p^{*}_{s},\infty} (\mathbb{R}^{N})} .
\end{align*}
Using $\| (u^{\frac{p}{2}})^{\frac{2}{p}} \|_{L^{p^{*}_{s},\infty} (\mathbb{R}^{N})}  = \| u^{\frac{p}{2}} \|^{\frac{2}{p}}_{L^{q,\infty}(\mathbb{R}^{N})} $, where $q=\frac{2N}{N-sp}$, we obtain
\begin{equation*}
    \varepsilon_{W}(u)^{\frac{1}{2p}} >  C   \| u^{\frac{p}{2}} \|^{\frac{2}{p}}_{L^{q,\infty}(\mathbb{R}^{N})} \geq C\inf_{a \geq 0}  \| u^{\frac{p}{2}} -  \omega^{\frac{p}{2}}_{a} \|^{\frac{2}{p}}_{L^{q,\infty} (\mathbb{R}^{N})},
\end{equation*}
where $\omega_{a}(x) = a \, |x|^{-\frac{N-sp}{p}}$ and $C= C(N,s,p)>0$. Hence,
\begin{equation*}
  \inf_{a \geq 0}  \| u^{\frac{p}{2}} -  \omega^{\frac{p}{2}}_{a} \|^{4}_{L^{q,\infty} (\mathbb{R}^{N})} \leq C  \varepsilon_{W}(u).
\end{equation*}
Therefore, it is enough to consider the regime $\varepsilon_{W}(u) \leq 1$. Fix $\gamma_{W} > 0$, to be chosen later, and define the set
\begin{equation*}
    A_{W} = \{ x \in \mathbb{R}^{N} : (v(x))^{\frac{p}{2}} > \varepsilon_{W}(u)^{\gamma_{W}} \}.
\end{equation*}
Since $u \in C^{1}_{c}(\mathbb{R}^{N})$, it follows that the set $A_{W}$ is open and bounded. For each $k \in \mathbb{Z}$, consider the set $A_{k}$, defined in \eqref{Defn: A_{k}}. Also, $\lim_{|x| \to 0} v(x) = 0$. Therefore, consider the smallest $j_{0} \in \mathbb{Z}$ such that 
\begin{equation*}
    (v(x))^{\frac{p}{2}} \leq \varepsilon_{W}(u)^{\gamma_{W}}, \quad \forall \ x \in A_{j_{0}}  \quad \text{and} \quad A_{W} \cap A_{j_{0}+1} \neq \phi \quad \Rightarrow \quad (v^{\frac{p}{2}})_{A_{j_{0}}} \leq \varepsilon_{W}(u)^{\gamma_{W}}.
\end{equation*}
Therefore, using $0< (v(x))^{\frac{p}{2}} - \varepsilon_{W}(u)^{\gamma_{W}} \leq (v(x))^{\frac{p}{2}} - (v^{\frac{p}{2}})_{A_{j_{0}}}$, for all $x \in A_{W}$, we obtain
\begin{align}\label{ineq999}
    \int_{ A_{W}} |(v(x))^{\frac{p}{2}} - \varepsilon_{W}(u)^{\gamma_{W}}|^{q} \, \frac{dx}{|x|^{N}} & \leq \int_{ A_{W}} |(v(x))^{\frac{p}{2}} - (v^{\frac{p}{2}})_{A_{j_{0}}}|^{q} \, \frac{dx}{|x|^{N}} \nonumber \\ 
    & = \sum_{\substack{k \in \mathbb{Z} \\ A_{W} \cap A_{k} \neq \phi}} \int_{A_{W} \cap A_{k}} |(v(x))^{\frac{p}{2}} - (v^{\frac{p}{2}})_{A_{j_{0}}}|^{q} \, \frac{dx}{|x|^{N}} ,
\end{align}
where $q= \frac{2N}{N-sp}$. Define $v_{2}(x) = u(x)|x|^{\frac{N-2s}{p}}$ for all $x \in \mathbb{R}^{N}$. Note that $v_{p}=v$. For each $k \in \mathbb{Z}$ such that $A_{W} \cap A_{k} \neq \phi$, we have
\begin{align}\label{ineq09}
    \int_{A_{W} \cap A_{k}} |(v_{2}(x))^{\frac{p}{2}} - (v^{\frac{p}{2}}_{2})_{A_{j_{0}}}|^{2^{*}_{s}} \, \frac{dx}{|x|^{N}} & \leq 2^{2^{*}_{s}-1} \int_{A_{k}} |(v_{2}(x))^{\frac{p}{2}} - (v^{\frac{p}{2}}_{2})_{A_{k}}|^{2^{*}_{s}} \, \frac{dx}{|x|^{N}} \nonumber \\ & \quad + 2^{2^{*}_{s}-1} |(v^{\frac{p}{2}}_{2})_{A_{k}} - (v^{\frac{p}{2}}_{2})_{A_{j_{0}}}|^{2^{*}_{s}} \int_{A_{W} \cap A_{k}}  \, \frac{dx}{|x|^{N}}  .
\end{align}
Now, applying Lemma \ref{Lemma: fractional Sobolev inequality} with $\Omega = \{ x \in \mathbb{R}^{N} : 1<|x|<2 \}$ and $\lambda = 2^{k}$, we obtain
\begin{align*}
    \int_{A_{k}} |(v_{2}(x))^{\frac{p}{2}} - (v^{\frac{p}{2}}_{2})_{A_{k}}|^{2^{*}_{s}} \, \frac{dx}{|x|^{N}} & \leq C \fint_{A_{k}} |(v_{2}(x))^{\frac{p}{2}} - (v^{\frac{p}{2}}_{2})_{A_{k}}|^{2^{*}_{s}} \, dx \\ & \leq C \left( 2^{k(2s-N)} \int_{A_{k}} \int_{A_{k}} \frac{|(v_{2}(x))^{\frac{p}{2}}-(v_{2}(y))^{\frac{p}{2}}|^{2}}{|x-y|^{N+2s}} \, dx \, dy  \right)^{\frac{2^{*}_{s}}{2}} .
\end{align*}
For all $x,y \in A_{k}$, we have $|x| \leq 2^{k+1}$, $|y| \leq 2^{k+1}$, and hence
\begin{equation*}
    2^{(k+1)(2s-N)} \leq   \min \left\{ |x|^{-\frac{N-2s}{r}},  |y|^{-\frac{N-2s}{r}} \right\} \max \left\{ |x|^{-\frac{N-2s}{r}},  |y|^{-\frac{N-2s}{r}} \right\}^{r-1} = W^{2}_{r}(x,y).
\end{equation*}
Therefore, we obtain
\begin{align*}
     \int_{A_{k}} |(v_{2}(x))^{\frac{p}{2}} - (v^{\frac{p}{2}}_{2})_{A_{k}}|^{2^{*}_{s}} \, \frac{dx}{|x|^{N}} \leq C \left( \int_{A_{k}} \int_{A_{k}} \frac{|(v_{2}(x))^{\frac{p}{2}}-(v_{2}(y))^{\frac{p}{2}}|^{2}}{|x-y|^{N+2s}} \, W^{2}_{r} (x,y) \, dx \, dy \right)^{\frac{2^{*}_{s}}{2}} ,
\end{align*}
where $C= C(N,s)>0$. Substituting the above estimate into \eqref{ineq09}, we obtain
\begin{align*}
    \int_{A_{W} \cap A_{k}} |(v_{2}(x))^{\frac{p}{2}} & - (v^{\frac{p}{2}}_{2})_{A_{j_{0}}}|^{2^{*}_{s}} \, \frac{dx}{|x|^{N}} \\ & \leq C \left( \int_{A_{k}} \int_{A_{k}} \frac{|(v_{2}(x))^{\frac{p}{2}}-(v_{2}(y))^{\frac{p}{2}}|^{2}}{|x-y|^{N+2s}} \, W^{2}_{r} (x,y) \, dx \, dy \right)^{\frac{2^{*}_{s}}{2}} \\ & \quad +  2^{2^{*}_{s}-1} |(v^{\frac{p}{2}}_{2})_{A_{k}} - (v^{\frac{p}{2}}_{2})_{A_{j_{0}}}|^{2^{*}_{s}} \int_{A_{W} \cap A_{k}}  \, \frac{dx}{|x|^{N}} . 
\end{align*}
In the above inequality, by replacing $s$ with $\frac{sp}{2}$ (note that for $1<p<2$, we have $\frac{sp}{2} \in (0,1)$) and taking $r=p$, we obtain
\begin{align}\label{ineq123}
     \int_{A_{W} \cap A_{k}} |(v(x))^{\frac{p}{2}} & - (v^{\frac{p}{2}})_{A_{j_{0}}}|^{q} \, \frac{dx}{|x|^{N}} \nonumber \\ & \leq C \left( \int_{A_{k}} \int_{A_{k}} \frac{|(v(x))^{\frac{p}{2}}-(v(y))^{\frac{p}{2}}|^{2}}{|x-y|^{N+sp}} \, W (x,y) \, dx \, dy \right)^{\frac{q}{2}} \nonumber \\ & \quad +  C |(v^{\frac{p}{2}})_{A_{k}} - (v^{\frac{p}{2}})_{A_{j_{0}}}|^{q} \int_{A_{W} \cap A_{k}}  \, \frac{dx}{|x|^{N}} ,
\end{align}
where $q = \frac{2N}{N-sp}$. In this inequality, we have used the fact that $v_{p} = v$ and $W^{p}_{p} = W$.

\smallskip

Similarly, by following the same steps as in the case $p \geq 2$, we obtain for the case $1<p<2$,
\begin{align*}
  |(v^{\frac{p}{2}}_{2})_{A_{k}} & - (v^{\frac{p}{2}}_{2})_{A_{j_{0}}}|^{2^{*}_{s}}  \leq C \sum_{\ell=j_{0}}^{k-1} |(v^{\frac{p}{2}}_{2})_{A_{\ell}} - (v^{\frac{p}{2}}_{2})_{A_{\ell+1}}|^{2^{*}_{s}}   \\ & \leq C \sum_{\ell=j_{0}}^{k-1} \left( \int_{A_{\ell} \cup A_{\ell+1}} \int_{A_{\ell} \cup A_{\ell+1}} \frac{|(v_{2}(x))^{\frac{p}{2}}-(v_{2}(y))^{\frac{p}{2}}|^{2}}{|x-y|^{N+2s}} \, W^{2}_{r} (x,y) \, dx \, dy \right)^{\frac{2^{*}_{s}}{2}} .
\end{align*}
Again, in the above inequality, by replacing $s$ with $\frac{sp}{2}$ (note that for $1<p<2$, we have $\frac{sp}{2} \in (0,1)$), taking $r=p$ and using the fact $v_{p}=v$ and $W^{p}_{p}=W$,  we get
\begin{align*}
 |(v^{\frac{p}{2}})_{A_{k}} - (v^{\frac{p}{2}})_{A_{j_{0}}}|^{q} & \leq C   \sum_{\ell=j_{0}}^{k-1} \left( \int_{A_{\ell} \cup A_{\ell+1}} \int_{A_{\ell} \cup A_{\ell+1}} \frac{|(v(x))^{\frac{p}{2}}-(v(y))^{\frac{p}{2}}|^{2}}{|x-y|^{N+sp}} \, W (x,y) \, dx \, dy \right)^{\frac{q}{2}}  \\ & \leq C \left( \int_{\mathbb{R}^{N}} \int_{\mathbb{R}^{N}} \frac{|(v(x))^{\frac{p}{2}}-(v(y))^{\frac{p}{2}}|^{2}}{|x-y|^{N+sp}} \, W (x,y) \, dx \, dy \right)^{\frac{q}{2}} \\ & = C \varepsilon_{W}(u)^{\frac{q}{2}}.
\end{align*}
Also, using the fact that for any $x \in A_{W} \cap A_{k}$, we have $v(x) > \varepsilon_{W}(u)^{\gamma_{W}}$, it follows that
\begin{equation*}
    \int_{A_{W} \cap A_{k}}  \, \frac{dx}{|x|^{N}} \leq \frac{1}{\varepsilon_{W}(u)^{\gamma_{W} p}} \int_{A_{W} \cap A_{k}} \frac{|u(x)|^{p}}{|x|^{sp}} \, dx.
\end{equation*}
Therefore, combining the above two estimates, we obtain
\begin{align*}
  |(v^{\frac{p}{2}})_{A_{k}} - (v^{\frac{p}{2}})_{A_{j_{0}}}|^{q}   \int_{A_{W} \cap A_{k}}  \, \frac{dx}{|x|^{N}} \leq C  \varepsilon_{W}(u)^{\frac{q}{2} - \gamma_{W} p} \int_{A_{W} \cap A_{k}} \frac{|u(x)|^{p}}{|x|^{sp}} \, dx. 
\end{align*}
Therefore, combining the above inequality  with the inequality \eqref{ineq123}, the inequality \eqref{ineq999} reduces to 
\begin{align*}
    \int_{ A_{W}} |(v(x))^{\frac{p}{2}} & - \varepsilon_{W}(u)^{\gamma_{W}}|^{q} \, \frac{dx}{|x|^{N}} \\ & \leq \sum_{\substack{k \in \mathbb{Z} \\ A_{W} \cap A_{k} \neq \phi}}  \left( \int_{A_{k}} \int_{A_{k}} \frac{|(v(x))^{\frac{p}{2}}-(v(y))^{\frac{p}{2}}|^{2}}{|x-y|^{N+sp}} \, W (x,y) \, dx \, dy \right)^{\frac{q}{2}}  \\ & \quad + C \varepsilon_{W}(u)^{\frac{q}{2} - \gamma_{W} p}  \sum_{\substack{k \in \mathbb{Z} \\ A_{W} \cap A_{k} \neq \phi}} \int_{A_{W} \cap A_{k}} \frac{|u(x)|^{p}}{|x|^{sp}} \, dx \\ & \leq \left( \int_{\mathbb{R}^{N}} \int_{\mathbb{R}^{N}} \frac{|(v(x))^{\frac{p}{2}}-(v(y))^{\frac{p}{2}}|^{2}}{|x-y|^{N+sp}} \, W (x,y) \, dx \, dy \right)^{\frac{q}{2}} \\ & \quad + C \varepsilon_{W}(u)^{\frac{q}{2} - \gamma_{W} p} \int_{\mathbb{R}^{N}} \frac{|u(x)|^{p}}{|x|^{sp}} \, dx . 
\end{align*}
Using Lemma \ref{Lemma: Hardy potential and Lorentz}, the normalization $\| u \|_{L^{p^{*}_{s},p}(\mathbb{R}^{N})} = 1$, and the definition of $\varepsilon_{W}$, we obtain
 \begin{equation*}
     \int_{ A_{W}} |(v(x))^{\frac{p}{2}}  - \varepsilon_{W}(u)^{\gamma_{W}}|^{q} \, \frac{dx}{|x|^{N}} \leq C\left(  \varepsilon_{W}(u)^{\frac{q}{2}} + \varepsilon_{W}(u)^{\frac{q}{2} - \gamma_{W} p} \right).
 \end{equation*}
 Choose $\gamma_{W} =  \frac{q}{4p}$ and using the fact $\varepsilon_{W}(u) \leq 1$, we obtain
 \begin{equation*}
     \int_{ A_{W}} |(v(x))^{\frac{p}{2}}  - \varepsilon_{W}(u)^{\gamma_{W}}|^{q} \, \frac{dx}{|x|^{N}} \leq C \left(  \varepsilon_{W}(u)^{\frac{q}{2}} + \varepsilon_{W}(u)^{\frac{q}{4}} \right) \leq C \varepsilon_{W}(u)^{\frac{q}{4}} .
 \end{equation*}
Therefore, using $\omega(x)= |x|^{-\frac{N-sp}{p}}$, we obtain
\begin{align*}
    \| u^{\frac{p}{2}} - \varepsilon_{W}(u)^{\gamma_{W}} \omega^{\frac{p}{2}} \|_{L^{q, \infty}(A_{W})} & \leq  \left( \int_{A_{W}} | (u(x))^{\frac{p}{2}} - \varepsilon_{W}(u)^{\gamma} (\omega(x))^{\frac{p}{2}}|^{q} \, dx \right)^{\frac{1}{q}} \\ &  =   \left( \int_{A_{W}} |(v(x))^{\frac{p}{2}} - \varepsilon_{W}(u)^{\gamma_{W}}|^{q} \, \frac{dx}{|x|^{N}} \right)^{\frac{1}{q}}  \leq C \varepsilon_{W}(u)^{\frac{1}{4}}.
\end{align*}
Consequently,
\begin{equation*}
  \inf_{a \geq 0}  \| u^{\frac{p}{2}} -  \omega^{\frac{p}{2}}_{a} \|^{4}_{L^{q, \infty}(A_{W})} \leq  \| u^{\frac{p}{2}} - \varepsilon_{W}(u)^{\gamma_{W}} \omega^{\frac{p}{2}} \|^{4}_{L^{q, \infty}(A_{W})} \leq C \varepsilon_{W}(u) .
\end{equation*}
For any $x \in A^{c}_{W}$, we have $(v(x))^{\frac{p}{2}} \leq \varepsilon_{W}(u)^{\gamma_{W}}$. In particular,
\begin{equation*}
 \omega(x)^{-\frac{p}{2}} |(u(x))^{\frac{p}{2}}- \varepsilon_{W}(u)^{\gamma} (\omega(x))^{\frac{p}{2}}| =  |(v(x))^{\frac{p}{2}} - \varepsilon_{W}(u)^{\gamma_{W}}| \leq 2 \varepsilon_{W}(u)^{\gamma_{W}}, \quad \forall \ x \in A^{c}_{W},
\end{equation*}
and therefore
\begin{equation*}
     |(u(x))^{\frac{p}{2}}- \varepsilon_{W}(u)^{\gamma_{W}} (\omega(x))^{\frac{p}{2}}| \leq 2 \varepsilon_{W}(u)^{\gamma_{W}} (\omega(x))^{\frac{p}{2}}, \quad \forall \ x \in A^{c}_{W} .
\end{equation*}
Using that $\omega^{\frac{p}{2}} \in L^{q, \infty}(\mathbb{R}^{N})$, where $q= \frac{2N}{N-sp}$, the definition of $\gamma_{W}$, and the assumption $\varepsilon_{W}(u) \leq 1$, we obtain
\begin{equation*}
    \|u^{\frac{p}{2}}- \varepsilon_{W}(u)^{\gamma_{W}} \omega^{\frac{p}{2}}\|_{L^{q, \infty}(A^{c}_{W})} \leq C \varepsilon_{W}(u)^{\gamma_{W}} \| \omega^{\frac{p}{2}} \|_{L^{q, \infty}(A^{c}_{W})} \leq C \varepsilon(u)^{\frac{q}{4p}} \leq C \varepsilon(u)^{\frac{1}{4}}. 
\end{equation*}
Consequently,
\begin{equation*}
     \|u^{\frac{p}{2}}- \varepsilon_{W}(u)^{\gamma_{W}} \omega^{\frac{p}{2}}\|^{4}_{L^{q, \infty}(A^{c}_{W})} \leq C \varepsilon_{W}(u).
\end{equation*}
Combining this with the estimate on $A_{W}$, we conclude that
\begin{align*}
    \|u^{\frac{p}{2}}- \varepsilon_{W}(u)^{\gamma_{W}} \omega^{\frac{p}{2}}\|_{L^{q, \infty}(\mathbb{R}^{N})} & \leq C \Big( \|u^{\frac{p}{2}}- \varepsilon_{W}(u)^{\gamma_{W}} \omega^{\frac{p}{2}}\|_{L^{q, \infty}(A_{W})} \\ & \quad \quad  + \|u^{\frac{p}{2}}- \varepsilon_{W}(u)^{\gamma_{W}} \omega^{\frac{p}{2}}\|_{L^{q, \infty}(A^{c}_{W})} \Big) \\ & \leq C \varepsilon_{W}(u)^{\frac{1}{4}}. 
\end{align*}
Therefore, combining the cases $\varepsilon_{W}(u) > 1$ and $\varepsilon_{W}(u) \leq 1$, and under the normalization $\| u \|_{L^{p^{*}_{s},p}(\mathbb{R}^{N})} =1$, we obtain
\begin{equation*}
   \inf_{a \geq 0} \|u^{\frac{p}{2}}-  \omega^{\frac{p}{2}}_{a}\|^{4}_{L^{q, \infty}(\mathbb{R}^{N})} \leq \|u^{\frac{p}{2}}- \varepsilon_{W}(u)^{\gamma_{W}} \omega^{\frac{p}{2}}\|^{4}_{L^{q, \infty}(\mathbb{R}^{N})} \leq C \varepsilon_{W}(u). 
\end{equation*}
To remove the normalization assumption $\| u \|_{L^{p^{*}_{s},p}(\mathbb{R}^{N})} = 1$, we apply the previous estimate to the function $\widetilde{u}$, defined as in the previous subsection. Thus, for $u \geq 0$, we obtain
\begin{align*}
\inf_{a \geq 0} \|u^{\frac{p}{2}}-  \omega^{\frac{p}{2}}_{a}\|^{4}_{L^{q, \infty}(\mathbb{R}^{N})}  \leq C  \varepsilon_{W}(u) \| u \|^{p}_{L^{p^{*}_{s},p}(\mathbb{R}^{N})}.  
\end{align*}
Now, given any $u \in C^{1}_{c}(\mathbb{R}^{N})$, define $u^{\frac{p}{2}}_{+} = \max \{ (u(x))^{\langle \frac{p}{2}\rangle }, 0 \}$ and $u^{\frac{p}{2}}_{-} = \max \{ - (u(x))^{\langle \frac{p}{2}\rangle }, 0 \}$, the positive and negative parts of $u^{\langle \frac{p}{2}\rangle} $ respectively, so that $u^{\langle \frac{p}{2} \rangle} = u^{\frac{p}{2}}_{+} - u^{\frac{p}{2}}_{-}$. Then, following a similar approach to that used in the case $p \geq 2$ and using the definition of $\mathcal{D}_{s,p}$, we obtain
\begin{align*}
     \int_{\mathbb{R}^{N}} \int_{\mathbb{R}^{N}} \frac{|u(x)-u(y)|^{p}}{|x-y|^{N+sp}} \, dx \, dy - \mathcal{C}_{N,s,p} \int_{\mathbb{R}^{N}} \frac{|u(x)|^{p}}{|x|^{sp}} \, dx \geq C(\mathcal{D}_{s,p}(u))^{4} \int_{\mathbb{R}^{N}} & \frac{|u(x)|^{p}}{|x|^{sp}} \, dx,
\end{align*}
for all $u \in C^{1}_{c}(\mathbb{R}^{N})$. Moreover, observing that $\| (u^{\frac{p}{2}})^{\frac{2}{p}} \|_{L^{p^{*}_{s},\infty} (\mathbb{R}^{N})}  = \| u^{\frac{p}{2}} \|^{\frac{2}{p}}_{L^{q,\infty}(\mathbb{R}^{N})}$ for $u \geq 0$, and arguing as in the case $p \geq 2$, we extend the above inequality for all 
$u \in \dot{W}^{s,p}(\mathbb{R}^{N})$ by density. This completes the proof of Theorem \ref{Theorem 2}.

\section{Quantitative form of the classical (local) Hardy Inequality : Proof of Theorem~\ref{Theorem: Improvement of stability exponent C and F result}}\label{Section : 5}
In this section, we establish the quantitative stability for the classical (local) Hardy inequality. Although this result was already obtained in \cite{Cianchi2008}, we provide a proof based entirely on rearrangement-free techniques. Moreover, we improve substantially the stability exponent of the distance function, obtaining a quantitative stability result that is sharper (decay estimate for the distance function) than that of \cite{Cianchi2008} for the local Hardy inequality.

\smallskip

The next lemma establishes a Poincar\'e inequality with a scaling parameter $\lambda > 0$. This parameter plays a crucial role in deriving a scale-invariant Poincar\'e inequality with an appropriate weighted seminorm, which appears as a remainder term in the classical Hardy inequality.

\begin{lemma}\label{Lemma: Local Sobolev inequality}
  Let $\Omega$ be a bounded, connected, open set in $\mathbb{R}^{N}$ with $C^{1}$ boundary, and let $1 \leq p < N$. Define $\Omega_{\lambda} = \{ \lambda x: x \in \Omega \}$. Then there exists a constant $C=C(N,p,\Omega)>0$ such that 
    \begin{equation}
      \left(  \fint_{\Omega_{\lambda}} |u(x)- (u)_{\Omega_{\lambda}}|^{p^{*}} \, dx \right)^{\frac{1}{p^{*}}} \leq C \left( \lambda^{p-N} \int_{\Omega_{\lambda}} |\nabla u(x)|^{p} \, dx   \right)^{\frac{1}{p}},  \quad \forall \, u \in W^{1,p}(\Omega_{\lambda}), 
    \end{equation}
    where $p^{*} =  \frac{Np}{N-p}$.
\end{lemma}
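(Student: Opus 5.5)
The proof will mirror that of Lemma~\ref{Lemma: fractional Sobolev inequality}: first establish the inequality for $\lambda=1$ on the fixed domain $\Omega$, then transfer it to $\Omega_\lambda$ by dilation, tracking the powers of $\lambda$.

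For $\lambda=1$, I would combine two classical facts. Since $\Omega$ is bounded with $C^1$ boundary, the Sobolev embedding gives $W^{1,p}(\Omega)\hookrightarrow L^{p^*}(\Omega)$:
\[
\|w\|_{L^{p^*}(\Omega)}\le C\bigl(\|w\|_{L^p(\Omega)}+\|\nabla w\|_{L^p(\Omega)}\bigr).
\]
I would apply this to $w=u-(u)_\Omega$. Connectedness and the Rellich compactness of $W^{1,p}(\Omega)\hookrightarrow L^p(\Omega)$ yield the Poincaré--Wirtinger inequality $\|u-(u)_\Omega\|_{L^p(\Omega)}\le C\|\nabla u\|_{L^p(\Omega)}$. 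Together these give
\[
\Bigl(\fint_\Omega|u-(u)_\Omega|^{p^*}\,dx\Bigr)^{1/p^*}\le C\|\nabla u\|_{L^p(\Omega)},
\]
with $C=C(N,p,\Omega)$, after absorbing $|\Omega|^{-1/p^*}$ into the constant. The compactness argument is the only genuinely nontrivial step. It is standard: argue by contradiction with a normalized sequence, extract a limit with zero gradient, which must be constant by connectedness and hence zero by the mean-zero condition. I would cite it, e.g.\ Evans, rather than reprove it.

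For general $\lambda$, given $u\in W^{1,p}(\Omega_\lambda)$, set $u_\lambda(x)=u(\lambda x)$ on $\Omega$. Then $\fint_\Omega u_\lambda=(u)_{\Omega_\lambda}$, and the left-hand side is unchanged: averages are dilation invariant, so
\[
\fint_\Omega|u_\lambda-(u)_{\Omega_\lambda}|^{p^*}\,dx=\fint_{\Omega_\lambda}|u-(u)_{\Omega_\lambda}|^{p^*}\,dX.
\]
For the gradient, $\nabla u_\lambda(x)=\lambda(\nabla u)(\lambda x)$, so the substitution $X=\lambda x$ gives
\[
\int_\Omega|\nabla u_\lambda|^p\,dx=\lambda^{p}\lambda^{-N}\int_{\Omega_\lambda}|\nabla u|^p\,dX=\lambda^{p-N}\int_{\Omega_\lambda}|\nabla u|^p\,dX.
\]
Substituting both identities into the $\lambda=1$ inequality yields the claim with the same constant $C$, independent of $\lambda$.

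The only point needing care is the exponent bookkeeping: one power $\lambda^{p}$ comes from the chain rule and $\lambda^{-N}$ from the Jacobian, and both sides must be normalized as averages so that no stray power of $\lambda$ remains on the left.
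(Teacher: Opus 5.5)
Your proposal is correct and follows essentially the same route as the paper. Both arguments apply the Sobolev embedding $W^{1,p}(\Omega)\hookrightarrow L^{p^*}(\Omega)$ to $u-(u)_\Omega$, use the Poincar\'e--Wirtinger inequality on the connected domain $\Omega$, and then apply the result to $u(\lambda x)$ and change variables, with your bookkeeping ($\lambda^{p}$ from the chain rule, $\lambda^{-N}$ from the Jacobian, and a dilation-invariant average on the left) matching the paper's computation.
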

\begin{proof}
   From \cite[Theorem $2$, Section $5.6$]{Evans2010}, for any $u \in W^{1,p}(\Omega)$,
    \begin{equation*}
        \| u \|_{L^{p^{*}}(\Omega)} \leq C \left( \| \nabla u \|^{p}_{L^{p}(\Omega)} +\| u \|^{p}_{L^{p}(\Omega)} \right)^{\frac{1}{p}}.
    \end{equation*}
    Applying this inequality to $u-(u)_{\Omega}$ and using the Poincaré inequality (since $\Omega$ is connected), we obtain
    \begin{equation*}
        \left(  \fint_{\Omega} |u(x)- (u)_{\Omega}|^{p^{*}} \, dx \right)^{\frac{1}{p^{*}}} \leq C \left( \int_{\Omega} |\nabla u(x)|^{p} \, dx   \right)^{\frac{1}{p}},
    \end{equation*}
   where $C=C(N,p,\Omega)>0$.  Now apply this inequality to $u(\lambda x)$. Using the identity $\fint_{\Omega} u(\lambda x) \, dx = \fint_{\Omega_{\lambda}} u(x) \, dx $, we obtain
    \begin{equation*}
         \left(  \fint_{\Omega} |u(\lambda x)- (u)_{\Omega_{\lambda}}|^{p^{*}} \, dx \right)^{\frac{1}{p^{*}}} \leq C  \left(\lambda^{p} \int_{\Omega}  |\nabla u( \lambda x)|^{p} \, dx   \right)^{\frac{1}{p}}.
    \end{equation*}
   Finally, using the change of variables $y = \lambda x$, we get
    \begin{equation*}
 \left(  \fint_{\Omega_{\lambda}} |u(x)- (u)_{\Omega_{\lambda}}|^{p^{*}} \, dx \right)^{\frac{1}{p^{*}}} \leq C \left( \lambda^{p-N} \int_{\Omega_{\lambda}} |\nabla u(x)|^{p} \, dx   \right)^{\frac{1}{p}}.
    \end{equation*}
    This proves the lemma.
\end{proof}

The next lemma provides a quantitative convexity estimate for the mapping $X \mapsto |X|^{p}$ in the range $1 < p < 2$. This estimate plays a crucial role in deriving the remainder term in the classical Hardy inequality for $1 < p < 2$.

\begin{lemma}\label{Lemma: Estimate 1<p<2}
Let $1<p<2$, and let $X,Y \in \mathbb{R}^{N} \setminus \{ 0\}$. Then
\begin{equation}
   |X+Y|^{p} \ge |X|^{p} + p|X|^{p-2} X\cdot Y 
+ \frac{p(p-1)}{2} (|X|+|Y|)^{p-2} |Y|^{2}. 
\end{equation}
\end{lemma}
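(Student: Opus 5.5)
The plan is a one-variable Taylor expansion with integral remainder along the segment from $X$ to $X+Y$. The only delicate point is that this segment may pass through the origin, where $|\cdot|^p$ is not $C^2$. To avoid case distinctions I will regularize. For $\varepsilon>0$ set
\[
F_\varepsilon(Z):=\bigl(|Z|^2+\varepsilon^2\bigr)^{p/2},\qquad f_\varepsilon(t):=F_\varepsilon(X+tY),\quad t\in[0,1].
\]
Then $f_\varepsilon\in C^\infty([0,1])$, and Taylor's formula gives
\[
f_\varepsilon(1)=f_\varepsilon(0)+f_\varepsilon'(0)+\int_0^1(1-t)\,f_\varepsilon''(t)\,dt .
\]

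\textbf{Derivatives.} Write $Z_t=X+tY$. A direct computation gives
\[
f_\varepsilon'(t)=p\bigl(|Z_t|^2+\varepsilon^2\bigr)^{\frac{p-2}{2}}\,Z_t\cdot Y,
\]
and
\[
f_\varepsilon''(t)=p\bigl(|Z_t|^2+\varepsilon^2\bigr)^{\frac{p-2}{2}}|Y|^2+p(p-2)\bigl(|Z_t|^2+\varepsilon^2\bigr)^{\frac{p-4}{2}}(Z_t\cdot Y)^2 .
\]
Since $p-2<0$, the second term is nonpositive. By Cauchy--Schwarz, $(Z_t\cdot Y)^2\le |Z_t|^2|Y|^2\le(|Z_t|^2+\varepsilon^2)|Y|^2$, so it is bounded below by $p(p-2)(|Z_t|^2+\varepsilon^2)^{\frac{p-2}{2}}|Y|^2$. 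Adding the two terms,
\[
f_\varepsilon''(t)\ \ge\ p(p-1)\bigl(|Z_t|^2+\varepsilon^2\bigr)^{\frac{p-2}{2}}|Y|^2 .
\]

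\textbf{Monotonicity in the weight.} Next I use $|Z_t|\le |X|+t|Y|\le |X|+|Y|$ for $t\in[0,1]$. Because the exponent $\frac{p-2}{2}$ is negative, this yields
\[
\bigl(|Z_t|^2+\varepsilon^2\bigr)^{\frac{p-2}{2}}\ \ge\ \bigl((|X|+|Y|)^2+\varepsilon^2\bigr)^{\frac{p-2}{2}} .
\]
This lower bound is independent of $t$. Inserting it in the remainder and using $\int_0^1(1-t)\,dt=\tfrac12$, I obtain
\[
F_\varepsilon(X+Y)\ge F_\varepsilon(X)+p\bigl(|X|^2+\varepsilon^2\bigr)^{\frac{p-2}{2}}X\cdot Y+\frac{p(p-1)}{2}\bigl((|X|+|Y|)^2+\varepsilon^2\bigr)^{\frac{p-2}{2}}|Y|^2 .
\]

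\textbf{Passage to the limit.} Finally let $\varepsilon\to0$. Every term converges to its unregularized counterpart: $X\neq0$ handles the first-order coefficient $|X|^{p-2}$, and $|X|+|Y|>0$ handles the last term. This gives exactly the inequality of Lemma~\ref{Lemma: Estimate 1<p<2}.

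The main obstacle would be the possible vanishing of $X+tY$ for some $t\in(0,1)$, where $f''$ blows up like $|t-t_0|^{p-2}$. The regularization sidesteps this entirely. Alternatively one could note that $p-2>-1$ makes this singularity integrable and $f'$ absolutely continuous, but the $\varepsilon$-argument is cleaner.
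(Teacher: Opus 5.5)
Your proof is correct and follows the paper's argument essentially step for step. It uses the same regularization $F_\varepsilon(Z)=(|Z|^2+\varepsilon^2)^{p/2}$ and the same Cauchy--Schwarz lower bound $p(p-1)\bigl(|X+tY|^2+\varepsilon^2\bigr)^{\frac{p-2}{2}}|Y|^2$ on the second derivative. It then applies the same monotonicity bound via $|X+tY|\le|X|+|Y|$ inside the integral Taylor remainder, and passes to the limit $\varepsilon\to0$ using $X\neq0$ and $|X|+|Y|>0$.
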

\begin{proof}
Fix $\epsilon>0$ and define the smooth function $$F_{\epsilon}(z)=(|z|^2+\epsilon^2)^{\frac{p}{2}},\,\, z \in \mathbb{R}^N.$$
For any $z,h \in \mathbb{R}^N$, $$DF_{\epsilon}(z)[h]=p(|z|^2+\epsilon^2)^{\frac{p-2}{2}}z\cdot h,$$$$D^2F_{\epsilon}(z)[h,h]=p(|z|^2+\epsilon^2)^{\frac{p-2}{2}}|h|^2+p(p-2)(|z|^2+\epsilon^2)^{\frac{p-4}{2}}(z\cdot h)^2.$$Since $$(z\cdot h)^2\leq |z|^2|h|^2\leq (|z|^2+\epsilon^2)|h|^2$$ and $p-2<0$ we get $$p(p-2)(|z|^2+\epsilon^2)^{\frac{p-4}{2}}(z\cdot h)^2\geq p(p-2)(|z|^2+\epsilon^2)^{\frac{p-2}{2}}|h|^2.$$Therefore $$D^2F_{\epsilon}(z)[h,h]\geq p(p-1)(|z|^2+\epsilon^2)^{\frac{p-2}{2}}|h|^2.$$ Define $\phi_{\epsilon}(t)=F_{\epsilon}(X+tY),\,\, t\in [0,1].$ Then $\phi_{\epsilon}\in C^2[0,1].$ The one-dimensional Taylor formula gives $$\phi_{\epsilon}(1)= \phi_{\epsilon}(0)+\phi_{\epsilon}'(0)+\int_0^1(1-t)\phi_{\epsilon}''(t)\,\mathrm{d}t.$$We note that $$\phi_{\epsilon}''(t)=D^2F_{\epsilon}(X+tY)[Y,Y]\geq p(p-1)(|X+tY|^2+\epsilon^2)^{\frac{p-2}{2}}|Y|^2.$$ As $p-2<0$ so $s \mapsto s^{\frac{p-2}{2}}$ is decreasing on $(0,\infty)$ and by the triangle inequality, noting $0\leq t \leq 1$, $$|X+tY|\leq |X|+t|Y|\leq |X|+|Y|.$$Hence $(|X+tY|^2+\epsilon^2)^{\frac{p-2}{2}}\geq ((|X|+|Y|)^2+\epsilon^2)^{\frac{p-2}{2}}$ which implies $$\phi_{\epsilon}''(t)\geq p(p-1)((|X|+|Y|)^2+\epsilon^2)^{\frac{p-2}{2}}|Y|^2.$$
It is direct that $$\phi_{\epsilon}(0)=(|X|^2+\epsilon^2)^{\frac{p}{2}},$$$$\phi_{\epsilon}'(0)=p(|X|^2+\epsilon^2)^{\frac{p-2}{2}}X\cdot Y.$$Plugging everything in the Taylor expansion we get $$(|X+Y|^2+\epsilon^2)^{\frac{p}{2}}\geq (|X|^2+\epsilon^2)^{\frac{p}{2}}+p(|X|^2+\epsilon^2)^{\frac{p-2}{2}}X\cdot Y+\frac{p(p-1)}{2}((|X|+|Y|)^2+\epsilon^2)^{\frac{p-2}{2}}|Y|^2.$$ Letting $\epsilon \to 0$ we conclude our result.
\end{proof}

\medskip

We are now in a position to establish the remainder term in the classical Hardy inequality for the case $1<p<2$. Since $\omega_{0}(x) = |x|^{-\frac{N-p}{p}}$ is a virtual extremizer of \eqref{classical-hardy}, it satisfies the associated Euler--Lagrange equation
\begin{equation}\label{local Euler--Lagrange equation}
    - \operatorname{div} \big( |\nabla \omega_{0}|^{p-2} \nabla \omega_{0} \big)
    = \left( \frac{N-p}{p} \right)^{p} \frac{\omega_{0}^{p-1}}{|x|^{p}}.
\end{equation}
Recall that the classical Hardy deficit is given by
\begin{equation*}
    \delta_{p}(u) = \int_{\mathbb{R}^{N}} |\nabla u(x)|^{p} \, dx - \left( \frac{N-p}{p} \right)^{p} \int_{\mathbb{R}^{N}} \frac{|u(x)|^{p}}{|x|^{p}} \, dx .
\end{equation*}

\begin{lemma}\label{Lemma: Local Hardy Remainder 1<p<2}
   Let $N \geq 2$ and $1 < p < 2$. Suppose that $u \in C_c^1(\mathbb{R}^N)$ satisfies $\delta_{p}(u) \leq 1$ and $\|u\|_{L^{p^*,p}(\mathbb{R}^N)} = 1$. Then there exists a constant $C = C(N,p) > 0$ such that
    \begin{equation}
          \int_{\mathbb{R}^{N}} |\nabla u(x)|^{p} \, dx - \left( \frac{N-p}{p} \right)^{p} \int_{\mathbb{R}^{N}} \frac{|u(x)|^{p}}{|x|^{p}} \, dx   \geq C  \left(\int_{\mathbb{R}^{N}} \frac{|\nabla v(x)|^{p}}{|x|^{N-p}} \, dx \right)^{\frac{2}{p}},
    \end{equation}
    where $v(x) =u(x) \omega^{-1}_{0}(x)= u(x)|x|^{\frac{N-p}{p}}$.
\end{lemma}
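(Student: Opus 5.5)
Write $u=\omega_0 v$, so that $\nabla u = X+Y$ with
\[
X:=v\,\nabla\omega_0,\qquad Y:=\omega_0\nabla v .
\]
At points where $X$ or $Y$ vanishes the inequality of Lemma~\ref{Lemma: Estimate 1<p<2} still holds, either by continuity or directly. Applying it pointwise gives
\[
|\nabla u|^p \ge |v|^p|\nabla\omega_0|^p + p|v|^{p-2}v\,\omega_0|\nabla\omega_0|^{p-2}\nabla\omega_0\cdot\nabla v + \frac{p(p-1)}{2}\big(|v||\nabla\omega_0|+\omega_0|\nabla v|\big)^{p-2}\omega_0^2|\nabla v|^2 .
\]

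\textbf{Step 1: ground-state identity.} The middle term equals $|\nabla\omega_0|^{p-2}\nabla\omega_0\cdot\nabla(|v|^p)\,\omega_0/p$ times $p$. Integrating it by parts and using the Euler--Lagrange equation \eqref{local Euler--Lagrange equation} together with $\nabla(\omega_0|v|^p)$ shows that the first two terms integrate to exactly $\left(\frac{N-p}{p}\right)^p\int |u|^p|x|^{-p}$. The singularity at the origin is harmless: $u\in C^1_c$, $|v|^p\omega_0\sim |u|^p|x|^{(N-p)(p-1)/p}$, and the boundary term on $\partial B_\rho$ vanishes as $\rho\to0$. Hence
\[
\delta_p(u)\ge c\int_{\mathbb{R}^N} \big(|v||\nabla\omega_0|+\omega_0|\nabla v|\big)^{p-2}\omega_0^2|\nabla v|^2\,dx =: c\,R(u).
\]

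\textbf{Step 2: reverse Hölder.} Set $a:=|v||\nabla\omega_0|+\omega_0|\nabla v|$, so that $\omega_0|\nabla v|\le a$. Since $\omega_0^p=|x|^{-(N-p)}$, write
\[
\int\frac{|\nabla v|^p}{|x|^{N-p}}=\int (\omega_0|\nabla v|)^p=\int \big(a^{p-2}\omega_0^2|\nabla v|^2\big)^{p/2}a^{(2-p)p/2}.
\]
Hölder with exponents $2/p$ and $2/(2-p)$ then gives
\[
\int(\omega_0|\nabla v|)^p\le R(u)^{p/2}\Big(\int a^p\Big)^{(2-p)/2}.
\]
Next, $\int a^p\le C\big(\int |v|^p|\nabla\omega_0|^p+\int \omega_0^p|\nabla v|^p\big)$. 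The first term equals $C\int |u|^p|x|^{-p}\le C\|u\|^p_{L^{p^*,p}}=C$ by the local analogue of Lemma~\ref{Lemma: Hardy potential and Lorentz}, since $|\nabla\omega_0|=\frac{N-p}{p}\omega_0/|x|$. Writing $I:=\int\omega_0^p|\nabla v|^p$, we obtain
\[
I\le R^{p/2}(C+CI)^{(2-p)/2}.
\]

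\textbf{Step 3: closing the estimate.} This is the main obstacle. If $I\le 1$, then $I\le CR^{p/2}$, so $I^{2/p}\le CR\le C\delta_p(u)$, which is the claim. If $I>1$, then $I\le CR^{p/2}I^{(2-p)/2}$, giving $I^{p/2}\le CR^{p/2}$, i.e.\ $I\le CR\le C\delta_p(u)\le C$; thus $I$ is bounded by a constant. Then $I^{2/p}\le C' I\le C\,\delta_p(u)$, using $2/p>1$ and the bound on $I$. The hypothesis $\delta_p(u)\le1$ is exactly what caps $I$ here.

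Combining the two cases yields $\delta_p(u)\ge C\,I^{2/p}$, with $C=C(N,p)$.
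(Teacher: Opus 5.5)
Your proof is correct and follows the paper's argument: the same pointwise convexity inequality with $X=v\nabla\omega_0$ and $Y=\omega_0\nabla v$, the cross term vanishing via the Euler--Lagrange equation, and the same H\"older step with exponents $2/p$ and $2/(2-p)$. The only difference is how $\int a^p$ is bounded: the paper uses $\omega_0|\nabla v|\le|\nabla u|+|v\nabla\omega_0|$ together with $\int|\nabla u|^p\le 1+C$ (which follows from $\delta_p(u)\le 1$ and the Lorentz normalization), so $\int a^p\le C$ immediately and your absorption/case analysis in Step~3 is not needed.
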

\begin{proof}
   Let $u \in C^{1}_{c}(\mathbb{R}^{N})$ be a nonzero function.  Given that $\delta_{p}(u) \leq 1$ and $\|u\|_{L^{p^*,p}(\mathbb{R}^N)} = 1$, applying Lemma \ref{Lemma: Hardy potential and Lorentz} with $s = 1$, we obtain
\begin{equation}\label{ineq7654}
    \int_{\mathbb{R}^{N}} |\nabla u(x)|^{p} \, dx  \leq 1 +  \left( \frac{N-p}{p} \right)^{p} \int_{\mathbb{R}^{N}} \frac{|u(x)|^{p}}{|x|^{p}} \, dx \leq 1 + \left( \frac{N-p}{p} \right)^{p} \left( \frac{|\mathbb{S}^{N-1}|}{N} \right)^{\frac{p}{N}}.
\end{equation}
Since $u = v \omega_{0}$, we have
\begin{equation*}
    \nabla u = v \nabla \omega_{0} + \omega_{0} \nabla v.
\end{equation*}
Applying Lemma \ref{Lemma: Estimate 1<p<2} with $X = v \nabla \omega_{0}$, and $Y = \omega_{0} \nabla v,$ we obtain
\begin{align*}
    \int_{\mathbb{R}^{N}} |\nabla u(x)|^{p} \, dx &  =  \int_{\mathbb{R}^{N}} |X+Y|^{p} \, dx  \\ & \geq \left( \frac{N-p}{p} \right)^{p} \int_{\mathbb{R}^{N}} \frac{|u(x)|^{p}}{|x|^{p}} \, dx + p \int_{\mathbb{R}^{N}} |\nabla \omega_{0}|^{p-2} \omega_{0} v |v|^{p-2} \nabla v \cdot \nabla \omega_{0} \, dx \\ & \quad + \frac{p(p-1)}{2} \int_{\mathbb{R}^{N}} \frac{|\nabla v(x)|^{2}}{|x|^{\left(\frac{N-p}{p}\right)2}} \left( |v(x) \nabla \omega_{0}(x)| + |\omega_{0}(x) \nabla v(x)| \right)^{p-2} \, dx .
\end{align*}
The integrand of the second term on the right-hand side can be written as $p^{-1}  \omega_{0} |\nabla \omega_{0}|^{p-2} \nabla \omega_{0} \cdot \nabla (|v|^{p})$.
An integration by parts and using \eqref{local Euler--Lagrange equation}  yields that this term vanishes. Therefore, we obtain
\begin{align*}
    \int_{\mathbb{R}^{N}} &\frac{|\nabla v(x)|^{2}}{|x|^{\left(\frac{N-p}{p} \right) 2}}  \left( |v(x) \nabla \omega_{0}(x)| + |\omega_{0}(x) \nabla v(x)| \right)^{p-2} \, dx \\ & \leq \frac{2}{p(p-1)} \left(   \int_{\mathbb{R}^{N}} |\nabla u(x)|^{p} \, dx - \left( \frac{N-p}{p} \right)^{p} \int_{\mathbb{R}^{N}} \frac{|u(x)|^{p}}{|x|^{p}} \, dx \right)  =\left( \frac{2}{p(p-1)} \right) \delta_{p}(u).
\end{align*}
Applying Hölder's inequality with exponents $\frac{2}{p}$ and $\frac{2}{2-p}$ and using the above estimate, we obtain
\begin{align*}
    \int_{\mathbb{R}^{N}} \frac{|\nabla v(x)|^{p}}{|x|^{N-p}} \, dx \leq C (\delta_{p}(u))^{\frac{p}{2}} \left( \int_{\mathbb{R}^{N}} \left(  |v \nabla \omega_{0}| + |\omega_{0} \nabla v|  \right)^{p} \, dx \right)^{\frac{2-p}{2}},
\end{align*}
where $C=C(p)>0$. Using $u= v \omega_{0}$, we have $|\omega_{0} \nabla v| \leq |\omega_{0} \nabla v + v \nabla \omega_{0}| + |v \nabla \omega_{0}| = |\nabla u|+ |v \nabla \omega_{0}|$. Therefore, the above inequality reduces to 
\begin{equation*}
    \int_{\mathbb{R}^{N}} \frac{|\nabla v(x)|^{p}}{|x|^{N-p}} \, dx \leq C (\delta_{p}(u))^{\frac{p}{2}} \left( \int_{\mathbb{R}^{N}} \left( 2 |v \nabla \omega_{0}| + |\nabla u|  \right)^{p} \, dx \right)^{\frac{2-p}{2}}. 
\end{equation*}
Also, using Lemma \ref{Lemma: Hardy potential and Lorentz} with $s=1$ and the assumption $\|u\|_{L^{p^{*},p}(\mathbb{R}^{N})} =1$, we have
\begin{equation*}
    \int_{\mathbb{R}^{N}} |v \nabla \omega_{0}|^{p} \, dx  =  \left( \frac{N-p}{p} \right)^{p} \int_{\mathbb{R}^{N}} \frac{|u(x)|^{p}}{|x|^{p}} \, dx \leq \left( \frac{N-p}{p} \right)^{p} \left( \frac{|\mathbb{S}^{N-1}|}{N} \right)^{\frac{p}{N}}.
\end{equation*}
Therefore, using the above estimate and the inequality \eqref{ineq7654}, we arrive at 
\begin{equation*}
    \int_{\mathbb{R}^{N}} \frac{|\nabla v(x)|^{p}}{|x|^{N-p}} \, dx \leq C (\delta_{p}(u))^{\frac{p}{2}}. 
\end{equation*}
This finishes the proof of Lemma.
\end{proof}

\begin{rem}\label{Remark: local Hardy remainder for p>2}
    The classical Hardy inequality with a remainder term for $p \geq 2$ and $p < N$ was established by Frank and Seiringer using the ground state representation. Their result (see \cite[Remark 2.5]{Frank2008}) states that
\begin{equation}\label{Hardy inequality with remainder: p>2}
    \int_{\mathbb{R}^{N}} |\nabla u(x)|^{p} \, dx -  \left( \frac{N-p}{p} \right)^{p} \int_{\mathbb{R}^{N}} \frac{|u(x)|^{p}}{|x|^{p}} \, dx \geq c_{p} \int_{\mathbb{R}^{N}}   \frac{|\nabla v(x)|^{p}}{|x|^{N-p}} \, dx,
\end{equation}
where $v(x) =u(x) \omega^{-1}_{0}(x)= u(x)|x|^{\frac{N-p}{p}}$ and $c_{p}$ is given in \eqref{Defn: c_p}.
\end{rem}

We now prove Theorem \ref{Theorem: Improvement of stability exponent C and F result}. The remainder term in the classical Hardy inequality, obtained in Lemma \ref{Lemma: Local Hardy Remainder 1<p<2} for $1<p<2$ and in Remark \ref{Remark: local Hardy remainder for p>2} for $p>2$, together with the scale-invariant Poincar\'e inequality, is fundamental in controlling the weighted seminorm that arises as the remainder term.

\begin{proof}[\textbf{Proof of Theorem \ref{Theorem: Improvement of stability exponent C and F result}}] It is sufficient to prove the result for $u \in C^{1}_{c}(\mathbb{R}^{N})$. Accordingly, let $u \in C^{1}_{c}(\mathbb{R}^N)$ with $u \ge 0$. Let $1<p<N$, and let $\alpha := \max \{4, 2p \} $. We also assume that 
\begin{equation*}
    \| u \|_{L^{p^{*},p}(\mathbb{R}^{N})} = 1.
\end{equation*}
    Recall that the Hardy deficit is defined by
    \begin{equation*}
     \delta_{p}(u) =   \int_{\mathbb{R}^{N}} |\nabla u(x)|^{p} \, dx - \left( \frac{N-p}{p} \right)^{p} \int_{\mathbb{R}^{N}} \frac{|u(x)|^{p}}{|x|^{p}} \, dx  .
    \end{equation*}
    First assume that $\delta_{p}(u)>1$. Using the normalization $\| u\|_{L^{p^{*},p}(\mathbb{R}^{N})} = 1$, we obtain
\begin{equation*}
   \delta_{p}(u)^{\frac{1}{\alpha}} > 1 =  \| u\|_{L^{p^{*},p}(\mathbb{R}^{N})} \geq C \| u\|_{L^{p^{*},\infty}(\mathbb{R}^{N})} \geq C \inf_{a \geq 0} \| u- a |x|^{-\frac{N-p}{p}}\|_{L^{p^{*},\infty}(\mathbb{R}^{N})}.
\end{equation*}
Therefore, it is sufficient to consider the case $\delta_{p}(u) \leq 1$. Let $v(x) = u(x) |x|^{\frac{N-p}{p}}$. For $\eta>0$, to be choosen later, and define the set 
\begin{equation*}
    \mathcal{G}= \{ x \in \mathbb{R}^{N} : v(x)> \delta_{p}(u)^{\eta} \}.
\end{equation*}
Since $u \in C^{1}_{c}(\mathbb{R}^{N})$, it follows that the set $\mathcal{G}$ is open and bounded. For each $k \in \mathbb{Z}$, consider the set $A_{k}$, defined in \eqref{Defn: A_{k}}. Since $\lim_{|x| \to 0} v(x) = \lim_{|x|\to 0} u(x)|x|^{\frac{N-p}{p}}=0$, consider the smallest $n_{0}$ such that 
\begin{equation*}
    v(x) \leq \delta_{p}(u)^{\eta}, \quad \forall \, x \in A_{n_{0}} \quad \text{and} \quad  \mathcal{G} \cap A_{n_{0}+1} \neq  \phi \quad \Rightarrow \quad (v)_{A_{n_{0}}} \leq \delta_{p}(u)^{\eta}.
\end{equation*}
Therefore, using $0< v(x) - \delta_{p}(u)^{\eta} \leq v(x) - (v)_{A_{n_{0}}}$, for all $x \in \mathcal{G}$, we obtain
\begin{align}\label{ineq45632}
    \int_{\mathcal{G}} |v(x) - \delta_{p}(u)^{\eta}|^{p^{*}} \, \frac{dx}{|x|^{N}} & \leq \int_{\mathcal{G}} |v(x) - (v)_{A_{n_{0}}}|^{p^{*}} \, \frac{dx}{|x|^{N}} \nonumber \\ 
    & = \sum_{\substack{k \in \mathbb{Z} \\ \mathcal{G} \cap A_{k} \neq \phi}} \int_{\mathcal{G} \cap A_{k}} |v(x) - (v)_{A_{n_{0}}}|^{p^{*}} \, \frac{dx}{|x|^{N}}. 
\end{align}
For each $k \in \mathbb{Z}$ such that $\mathcal{G} \cap A_{k} \neq \phi$, we estimate
\begin{align}\label{ineq9877}
  \int_{\mathcal{G} \cap A_{k}} |v(x) - (v)_{A_{n_{0}}}|^{p^{*}} \, \frac{dx}{|x|^{N}} & \leq 2^{p^{*}-1} \int_{\mathcal{G} \cap A_{k}} |v(x) - (v)_{A_{k}}|^{p^{*}} \, \frac{dx}{|x|^{N}} \nonumber \\ & \quad + 2^{p^{*}-1} |(v)_{A_{k}} - (v)_{A_{n_{0}}}|^{p^{*}} \int_{\mathcal{G} \cap A_{k}}  \, \frac{dx}{|x|^{N}} \nonumber \\ & \leq 2^{p^{*}-1} \int_{A_{k}} |v(x) - (v)_{A_{k}}|^{p^{*}} \, \frac{dx}{|x|^{N}} \nonumber \\ & \quad + 2^{p^{*}-1} |(v)_{A_{k}} - (v)_{A_{n_{0}}}|^{p^{*}} \int_{\mathcal{G} \cap A_{k}}  \, \frac{dx}{|x|^{N}}  .
\end{align}
Applying Lemma \ref{Lemma: Local Sobolev inequality} with $\Omega = \{ x \in \mathbb{R}^{N} : 1<|x|<2 \}$ and $\lambda = 2^{k}$, we obtain
\begin{align*}
    \int_{A_{k}} |v(x) - (v)_{A_{k}}|^{p^{*}} \, \frac{dx}{|x|^{N}}  \leq C \fint_{A_{k}} |v(x) - (v)_{A_{k}}|^{p^{*}} \, dx  \leq C \left( 2^{k(p-N)} \int_{A_{k}} |\nabla v|^{p} \, dx  \right)^{\frac{p^{*}}{p}} .
\end{align*}
Here the constant $C:=C(N,p,\Omega)$ does not depend on $k.$ For all $x \in A_{k}$, we have $|x| \leq 2^{k+1}$, and hence  $|x|^{(N-p)} \leq 2^{(k+1)(N-p)}$. Therefore, we obtain
\begin{equation*}
     \int_{A_{k}} |v(x) - (v)_{A_{k}}|^{p^{*}} \, \frac{dx}{|x|^{N}} \leq C \left( \int_{A_{k}}  \, \frac{|\nabla v|^{p}}{|x|^{N-p}}  \, dx \right)^{\frac{p^{*}}{p}},
\end{equation*}
where $C=C(N,p)>0$. Therefore, the estimate \eqref{ineq9877}, reduces to 
\begin{align}\label{ineq87651}
     \int_{\mathcal{G} \cap A_{k}} |v(x) - (v)_{A_{n_{0}}}|^{p^{*}} \, \frac{dx}{|x|^{N}} & \leq C \left( \int_{A_{k}}   \frac{|\nabla v|^{p}}{|x|^{N-p}}  \, dx\right)^{\frac{p^{*}}{p}} \nonumber \\ & \quad + 2^{p^{*}-1} |(v)_{A_{k}} - (v)_{A_{n_{0}}}|^{p^{*}} \int_{\mathcal{G} \cap A_{k}}  \, \frac{dx}{|x|^{N}}.
\end{align}
Now, applying Lemma \ref{Lemma : on two disjoint set} with $E = A_{\ell}$ and $F = A_{\ell+1}$, for $n_{0} \leq \ell \leq k-1$, we obtain
\begin{align*}
  |(v)_{A_{k}} - (v)_{A_{n_{0}}}|^{p^{*}} & \leq 2^{p^{*}-1} \sum_{\ell = n_{0}}^{k-1}  |(v)_{A_{\ell}} - (v)_{A_{\ell+1}}|^{p^{*}} \\ & \leq C \sum_{\ell = n_{0}}^{k-1} \fint_{A_{\ell} \cup A_{\ell+1}}  |v(x) - (v)_{A_{\ell} \cup A_{\ell+1}}|^{p^{*}} \, dx, 
\end{align*}
where $C=C(N,p)>0$. Next, we apply Lemma \ref{Lemma: Local Sobolev inequality} with $\Omega = \{x \in \mathbb{R}^{N} : 1 < |x| < 4\}$ and scaling parameter $\lambda = 2^{\ell}$ and using $|x|^{(N-p)} \leq 2^{(\ell+2)(N-p)}$ for all $x\in A_{\ell} \cup A_{\ell+1}$, we obtain
\begin{align*}
    |(v)_{A_{k}} - (v)_{A_{n_{0}}}|^{p^{*}} & \leq C \sum_{\ell = n_{0}}^{k-1} \left( 2^{\ell (p-N)}  \int_{A_{\ell \cup A_{\ell+1}}} |\nabla v(x)|^{p} \, dx \right)^{\frac{p^{*}}{p}} \\ & \leq C \sum_{\ell = n_{0}}^{k-1} \left(  \int_{A_{\ell} \cup A_{\ell+1}}   \frac{|\nabla v(x)|^{p}}{|x|^{N-p}} \, dx \right)^{\frac{p^{*}}{p}}  \leq C \left( \int_{\mathbb{R}^{N}}  \, \frac{|\nabla v(x)|^{p}}{|x|^{N-p}} \, dx \right)^{\frac{p^{*}}{p}},
\end{align*}
where $C = C(N,p)>0$.
Also, using the fact that for any $x \in \mathcal{G} \cap A_{k}$, we have $v(x) > \delta_{p}(u)^\eta$, it follows that
\begin{equation*}
    \int_{\mathcal{G} \cap A_{k}}  \, \frac{dx}{|x|^{N}} \leq \frac{1}{\delta_{p}(u)^{\eta p}} \int_{\mathcal{G} \cap A_{k}} \frac{|u(x)|^{p}}{|x|^{p}} \, dx.
\end{equation*}
Therefore, combining the above two estimates and invoking the classical Hardy inequality with remainder—namely, for $p \geq 2$ in \eqref{Hardy inequality with remainder: p>2} and for $1 < p < 2$ in Lemma \ref{Lemma: Local Hardy Remainder 1<p<2}—under the assumptions $\delta_{p}(u) \leq 1$ and $\|u\|_{L^{p^{*},p}(\mathbb{R}^{N})} = 1$, we obtain
\begin{align*}
    |(v)_{A_{k}}  -  (v)_{A_{n_{0}}}|^{p^{*}}  \int_{\mathcal{G} \cap A_{k}}  \, \frac{dx}{|x|^{N}} & \leq C \delta_{p}(u)^{\frac{p^{*}}{p} \beta}  \, \frac{1}{\delta^{\eta p}} \int_{\mathcal{G} \cap A_{k}} \frac{|u(x)|^{p}}{|x|^{p}} \, dx \\ & = C \delta_{p}(u)^{\frac{p^{*}}{p} \beta - \eta p} \int_{\mathcal{G} \cap A_{k}} \frac{|u(x)|^{p}}{|x|^{p}} \, dx,
\end{align*}
where $\beta:=\min \{ 1, \frac{p}{2} \}$. Substituting the above estimate into \eqref{ineq87651}, we obtain
\begin{equation*}
    \int_{\mathcal{G} \cap A_{k}} |v(x) - (v)_{A_{n_{0}}}|^{p^{*}} \, \frac{dx}{|x|^{N}}  \leq C \left( \int_{A_{k}}  \frac{|\nabla v|^{p}}{|x|^{N-p}}  \, dx \right)^{\frac{p^{*}}{p}}+ C \delta_{p}(u)^{\frac{p^{*}}{p} \beta - \eta p} \int_{\mathcal{G} \cap A_{k}} \frac{|u(x)|^{p}}{|x|^{p}} \, dx.
\end{equation*}
Therefore, combining \eqref{ineq45632} with the estimates above, and invoking the classical Hardy inequality with remainder—namely, for $p \geq 2$ in \eqref{Hardy inequality with remainder: p>2} and for $1 < p < 2$ in Lemma \ref{Lemma: Local Hardy Remainder 1<p<2}—under the assumptions $\delta_{p}(u) \leq 1$ and the normalization $\| u \|_{L^{p^{*},p}(\mathbb{R}^{N})} = 1$, together with Lemma \ref{Lemma: Hardy potential and Lorentz} with $s = 1$, we obtain
\begin{align*}
    \int_{\mathcal{G}} |v(x) - \delta_{p}(u)^{\eta}|^{p^{*}} \, \frac{dx}{|x|^{N}} & \leq \sum_{\substack{k \in \mathbb{Z} \\ \mathcal{G} \cap A_{k} \neq \phi}} \int_{\mathcal{G} \cap A_{k}} |v(x) - (v)_{A_{n_{0}}}|^{p^{*}} \, \frac{dx}{|x|^{N}} \\ & \leq C \delta_{p}(u)^{\frac{p^{*}}{p}\beta} + C \delta_{p}(u)^{\frac{p^{*}}{p}\beta - \eta p} \int_{\mathbb{R}^{N}} \frac{|u(x)|^{p}}{|x|^{p}} \, dx \\ & \leq C \left(\delta_{p}(u)^{\frac{p^{*}}{p}\beta} + \delta_{p}(u)^{\frac{p^{*}}{p} \beta - \eta p} \right),
\end{align*}
where $C=C(N,p)>0$. Choose $\eta = \frac{p^{*}}{2 p^{2}} \beta$.  Then, using the assumption $\delta_{p}(u) \leq 1$, we obtain
\begin{align*}
    \int_{\mathcal{G}} |v(x) - \delta_{p}(u)^{\eta}|^{p^{*}} \, \frac{dx}{|x|^{N}} & \leq C \left( \delta_{p}(u)^{\frac{p^{*}}{p} \beta} + \delta_{p}(u)^{\frac{p^{*}}{2p} \beta} \right) \\ & \leq C \left( \delta_{p}(u)^{\frac{p^{*}}{2p}\beta} + \delta_{p}(u)^{\frac{p^{*}}{2p}\beta }  \right) \leq C \delta_{p}(u)^{\frac{p^{*}}{2p}\beta} .
\end{align*}
Therefore, 
\begin{align*}
    \| u - \delta_{p}(u)^{\eta} |x|^{-\frac{N-p}{p}} \|_{L^{p^{*}, \infty}(\mathcal{G})} & \leq  \left( \int_{\mathcal{G}} | u(x) - \delta_{p}(u)^{\eta} |x|^{-\frac{N-p}{p}} |^{p^{*}} \, dx \right)^{\frac{1}{p^{*}}} \\ &  =   \left( \int_{\mathcal{G}} |v(x) - \delta_{p}(u)^{\eta}|^{p^{*}} \, \frac{dx}{|x|^{N}} \right)^{\frac{1}{p^{*}}}  \leq C \delta_{p}(u)^{\frac{\beta}{2p}}.
\end{align*}
Consequently,
\begin{equation*}
    \| u - \delta^{\eta}_{p}(u) |x|^{-\frac{N-p}{p}}\|^{\alpha}_{L^{p^{*}, \infty}(\mathcal{G})} \leq C \delta_{p}(u) ,
\end{equation*}
where $\alpha= \max \{4, 2p \}$. For any $x \in \mathcal{G}^{c}$, we have $v(x) \leq \delta_{p}(u)^{\eta}$. In particular,
\begin{equation*}
 |x|^{\frac{N-p}{p}} |u(x)- \delta_{p}(u)^{\eta} |x|^{-\frac{N-p}{p}}| =  |v(x) - \delta_{p}(u)^{\eta}| \leq 2 \delta_{p}(u)^{\eta}, \quad \forall \ x \in \mathcal{G}^{c},
\end{equation*}
and therefore
\begin{equation*}
     |u(x)- \delta_{p}(u)^{\eta} |x|^{-\frac{N-p}{p}}| \leq 2 \delta_{p}(u)^{\eta} |x|^{-\frac{N-p}{p}}, \quad \forall \ x \in \mathcal{G}^{c} .
\end{equation*}
Using that $|x|^{-\frac{N-p}{p}} \in L^{p^{*}, \infty}(\mathbb{R}^{N})$, the definition of $\eta$, and the assumption $\delta_{p}(u) \leq 1$, we obtain
\begin{equation*}
    \|u- \delta_{p}(u)^{\eta} |x|^{-\frac{N-p}{p}}\|_{L^{p^{*}, \infty}(\mathcal{G}^{c})} \leq C \delta_{p}(u)^{\eta} \| |x|^{-\frac{N-p}{p}} \|_{L^{p^{*}, \infty}(\mathcal{G}^{c})} \leq C \delta_{p}(u)^{\frac{p^{*}}{2p^{2}}\beta} \leq C \delta_{p}(u)^{\frac{\beta}{2p}}. 
\end{equation*}
Consequently,
\begin{equation*}
     \|u- \delta_{p}(u)^{\eta} |x|^{-\frac{N-p}{p}}\|^{\alpha}_{L^{p^{*}, \infty}(\mathcal{G}^{c})} \leq C \delta_{p}(u).
\end{equation*}
Combining this with the estimate on $\mathcal{G}$, we conclude that
\begin{align*}
    \|u- \delta_{p}(u)^{\eta} |x|^{-\frac{N-p}{p}}\|_{L^{p^{*}, \infty}(\mathbb{R}^{N})} & \leq C \Big( \|u- \delta_{p}(u)^{\eta} |x|^{-\frac{N-p}{p}}\|_{L^{p^{*}, \infty}(\mathcal{G})} \\ & \quad \quad  + \|u- \delta_{p}(u)^{\eta} |x|^{-\frac{N-p}{p}}\|_{L^{p^{*}, \infty}(\mathcal{G}^{c})} \Big) \\ & \leq C \delta_{p}(u)^{\frac{\beta}{2p}}. 
\end{align*}
Therefore, combining the cases $\delta_{p}(u) > 1$ and $\delta_{p}(u) \leq 1$, and under the normalization $\| u \|_{L^{p^{*},p}(\mathbb{R}^{N})} =1$, we obtain
\begin{equation*}
   \inf_{a \geq 0} \|u-  a|x|^{-\frac{N-p}{p}}\|^{\alpha}_{L^{p^{*}, \infty}(\mathbb{R}^{N})} \leq \|u- \delta_{p}(u)^{\eta} |x|^{-\frac{N-p}{p}}\|^{\alpha}_{L^{p^{*}, \infty}(\mathbb{R}^{N})} \leq C \delta_{p}(u). 
\end{equation*}
To remove the normalization $\| u \|_{L^{p^{*},p}(\mathbb{R}^{N})} =1$ and the assumption $u \geq 0$, following a similar approach as done in the proof of Theorem \ref{Theorem 1}, and the distance function $d_{p}$, we obtain
\begin{equation*}
    \int_{\mathbb{R}^{N}} |\nabla u(x)|^{p} \,dx - \left(\frac{N-p}{p}\right)^{p} \int_{\mathbb{R}^{N}} \frac{|u(x)|^p}{|x|^p} \, dx \geq C ( d_{p}(u))^{\alpha} \int_{\mathbb{R}^{N}} \frac{|u(x)|^p}{|x|^p} \, dx,
\end{equation*}
where $\alpha= \max \{4,2p \}$. This proves Theorem \ref{Theorem: Improvement of stability exponent C and F result}.
\end{proof}


\section{Equivalence between local and non-local Hardy inequality: Proof of Theorem \ref{Theorem: Equivalence bw local and nonlocal via Emden Trans} and Theorem \ref{qspf}}\label{Section : 6}

This section is devoted to the study of the equivalence between local and nonlocal Hardy inequalities via a transformation $T$, defined in \eqref{Defn: The operator T}, constructed through the Emden--Fowler transformation. We begin with some preliminaries and, more importantly, present several key properties (see Theorem \ref{properties of T}) of $T$ which are of independent interest.

\medskip 

Let $N \geq 3$ and $s \in (0,1)$. Let $\mathcal{C} = \mathbb{R} \times \mathbb{S}^{N-1}$ be the infinite cylinder with coordinates $(t,w)$, where $t \in \mathbb{R}$ and $w \in \mathbb{S}^{N-1}$. Let $\mu_{\ell} = \ell (\ell+N-2)$, where $ \ell \in \mathbb{N}\cup \{ 0\}$ be the eigenvalues of the Laplacian-Beltrami operator $- \Delta_{\mathbb{S}^{N-1}}$. 

\medskip

The operator $T$ fundamentally acts as a spectral transport map, and its properties are best understood by analyzing its core component, the pseudo-differential multiplier $M_s$, on the Emden-Fowler cylinder $\mathcal{C} = \mathbb{R} \times \mathbb{S}^{N-1}$.

We first define the weighted Euclidean Lebesgue spaces. For $\alpha \in \mathbb{R}$, let $L^2(\mathbb{R}^N; |x|^{-\alpha}\,dx)$ be the space of measurable functions equipped with the norm
$$\| u \|_{L^2(\mathbb{R}^N; |x|^{-\alpha} \, dx)}^2 := \int_{\mathbb{R}^N} |u(x)|^2 |x|^{-\alpha} \, dx.$$ Let $\Phi_s: L^2(\mathbb{R}^N; |x|^{-2s}\, dx) \to L^2(\mathcal{C})$ be the transformation defined by
$$(\Phi_s u)(t, w) := e^{\frac{N-2s}{2}t} u(e^t w),$$where $x = e^t w$, $t \in \mathbb{R}$, and $w \in \mathbb{S}^{N-1}$.

\begin{theorem}\label{properties of T}
Let $0 < s < 1.$ Then the transformation $T$, defined in \eqref{Defn: The operator T}, satisfies the following properties:
\begin{itemize}
    \item[(i)] \textbf{Unitary equivalence of weighted spaces.} 
    The map $\Phi_s$ defines an isometric isomorphism from $L^2(\mathbb{R}^N; |x|^{-2s}\,dx)$ onto $L^2(\mathcal{C})$. Moreover, the operator $T$ admits the factorization
    \[
    T = \Phi_1^{-1} \, M_s \, \Phi_s,
    \]
    where $\Phi_\beta$ denotes the conformal transform with weight parameter $\beta$.

    \item[(ii)] \textbf{Symbol behaviour and smoothing order.} 
    The multiplier
    \[
    \bar{m}(\xi,\ell) = \sqrt{\frac{P_s(\xi,\ell)}{\xi^2 + \mu_\ell}}
    \]
    satisfies $\bar{m} \in L^\infty(\mathbb{R} \times (\mathbb{N}\cup\{0\}))$ and exhibits the asymptotic behaviour
    \[
    \bar{m}(\xi,\ell) \sim (\xi^2 + \ell^2)^{\frac{s-1}{2}} 
    \quad \text{as} \quad \xi^2 + \ell^2 \to \infty.
    \]
    In particular, $M_s$ is a pseudo-differential operator of strictly negative order $s-1$.

    \item[(iii)] \textbf{Boundedness and injectivity.} 
    The operator $T$ extends uniquely to a bounded and injective linear map
    \[
    T : L^2(\mathbb{R}^N; |x|^{-2s}\,dx) \longrightarrow L^2(\mathbb{R}^N; |x|^{-2}\,dx).
    \]

    \item[(iv)] \textbf{Scale invariance.} 
    Let $D_\lambda$ denote the dilation operator $(D_\lambda u)(x) = u(\lambda x)$ for $\lambda > 0$. Then $T$ satisfies the intertwining relation
    \[
    T \circ D_\lambda = \lambda^{s-1} \, D_\lambda \circ T.
    \]

    \item[(v)] \textbf{Sobolev regularization on the cylinder.} 
    Let $H^r(\mathcal{C})$ denote the fractional Sobolev space on the cylinder. Then there exists a constant $C = C(N,s) > 0$ such that
    \[
    \|M_s v\|_{H^{1-s}(\mathcal{C})} \leq C \|v\|_{L^2(\mathcal{C})},
    \quad \text{for all } v \in L^2(\mathcal{C}).
    \]
    Equivalently,
    \[
    T\big(L^2(\mathbb{R}^N; |x|^{-2s}\,dx)\big) 
    \subset \Phi_1^{-1}\big(H^{1-s}(\mathcal{C})\big).
    \]
\end{itemize}
\end{theorem}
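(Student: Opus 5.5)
Throughout, the plan is to work on the cylinder and reduce every claim to a statement about the Fourier--spherical harmonic multiplier $\bar m(\xi,\ell)$. For (i), we compute the Jacobian directly: with $x=e^t w$ one has $dx=e^{Nt}\,dt\,dw$, hence $|x|^{-2s}dx=e^{(N-2s)t}\,dt\,dw$. This gives $\|\Phi_s u\|_{L^2(\mathcal C)}=\|u\|_{L^2(|x|^{-2s}dx)}$, and the explicit inverse $(\Phi_s^{-1}\phi)(x)=|x|^{-\frac{N-2s}{2}}\phi(\ln|x|,x/|x|)$ shows that $\Phi_s$ is onto. Since $\Phi_s u(t,w)=e^{\frac{N-2s}{2}t}u(e^tw)$ is exactly $|\cdot|^{\frac{N-2s}{2}}u$ written in cylindrical coordinates, comparing with \eqref{Defn: The operator T} gives $T=\Phi_1^{-1}M_s\Phi_s$ literally.

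For (ii), we use the Stirling-type asymptotics $|\Gamma(z+a)/\Gamma(z+b)|\sim|z|^{a-b}$ as $|z|\to\infty$ in a sector; here $z=\frac{N+2\ell+2i\xi}{4}$, $a-b=s$, and the argument of $z$ stays in a half-plane. This yields $P_s(\xi,\ell)\sim 2^{2s}|z|^{2s}\sim c(\xi^2+\ell^2)^{s}$, while $\xi^2+\mu_\ell\sim \xi^2+\ell^2$. Hence $\bar m\sim(\xi^2+\ell^2)^{(s-1)/2}$ at infinity. For boundedness, the behaviour near $(\xi,\ell)=(0,0)$ has to be checked separately, and \textbf{this is the main obstacle}. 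There the denominator vanishes, but $P_s(\cdot,0)$ is a smooth even function of $\xi$ with $P_s(0,0)=0$ (by the definition of $C_{N,s}$), so $P_s(\xi,0)=\kappa\xi^2+O(\xi^4)$ with
\[
\kappa=\tfrac{1}{4}C_{N,s}\bigl(\psi'(\tfrac{N-2s}{4})-\psi'(\tfrac{N+2s}{4})\bigr)>0.
\]
This $\kappa$ is obtained by differentiating $\log|\Gamma|^2$ twice, which produces trigamma functions, and it is precisely what gives $K_{N,s}$ later. The ratio is therefore bounded near $0$. For $\ell\ge1$ we have $\mu_\ell\ge N-1>0$ and $P_s$ is continuous and nonnegative (by the Hardy inequality itself, i.e.\ the diagonalization in \cite{FrankLieb2008}). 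Combining continuity on compact sets with the decay at infinity gives $\bar m\in L^\infty$. One point needs care: if $s>$ the order gap, we should confirm $P_s\ge0$ mode by mode, and we take this from the diagonalized form of \eqref{Fourier definition}. The derivative bounds in $\xi$ needed for the symbol class follow from the same asymptotic expansion differentiated term by term (digamma asymptotics).

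For (iii), $M_s$ is a Fourier multiplier with bounded symbol, hence bounded on $L^2(\mathcal C)$ with norm $\|\bar m\|_\infty$ by Plancherel in $t$ and orthogonality of spherical harmonics. Composing with the isometries from (i) gives boundedness of $T$; density of $\mathcal S$ in the weighted space gives the unique extension. Injectivity follows because $\bar m>0$ except on the null set $\{(0,0)\}$ of the product measure $d\xi\otimes$counting. Strict positivity of $P_s$ for $(\xi,\ell)\ne(0,0)$ we take from strict monotonicity of $|\Gamma(\frac{N+2s+2\ell+2i\xi}{4})/\Gamma(\frac{N-2s+2\ell+2i\xi}{4})|^2$ in $\ell$ and $|\xi|$, a standard log-convexity fact for $\Gamma$.

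For (iv), dilation corresponds on the cylinder to translation: $\Phi_s D_\lambda u=\lambda^{-\frac{N-2s}{2}}\tau_{\ln\lambda}\Phi_s u$, where $\tau_h\phi(t,w)=\phi(t+h,w)$. The multiplier $M_s$ commutes with $\tau_h$, and $\Phi_1^{-1}\tau_h=\lambda^{\frac{N-2}{2}}D_\lambda\Phi_1^{-1}$. Multiplying out the powers gives $\lambda^{-\frac{N-2s}{2}+\frac{N-2}{2}}=\lambda^{s-1}$. For (v), we define $\|\phi\|_{H^{r}(\mathcal C)}^2=\sum_{\ell,m}\int(1+\xi^2+\mu_\ell)^{r}|\hat\phi|^2d\xi$. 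Then (ii) gives $(1+\xi^2+\mu_\ell)^{1-s}\bar m^2\le C$ uniformly, using boundedness near the origin and decay $(\xi^2+\ell^2)^{s-1}$ at infinity. This proves the estimate, and the inclusion follows by applying $\Phi_1^{-1}$.
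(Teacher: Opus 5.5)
Your proposal is correct and follows essentially the same route as the paper: the Jacobian computation and literal factorization for (i), Stirling asymptotics together with the trigamma Taylor expansion at $(\xi,\ell)=(0,0)$ for (ii), the $L^\infty$ bound and positivity of $\bar m$ for (iii), commutation of $M_s$ with axial translations for (iv), and uniform boundedness of $(1+\xi^2+\mu_\ell)^{1-s}\bar m^2$ for (v). You are slightly more careful than the paper in two places: you check surjectivity of $\Phi_s$ via its explicit inverse, and you note that $P_s\ge 0$ must hold mode by mode so that the square root defining $\bar m$ is real, a point the paper leaves implicit.
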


\medskip 
\begin{proof}
     \textit{Proof of $(i)$:} Let $u \in C_c^\infty(\mathbb{R}^N \setminus \{0\})$. We compute the $L^2(\mathcal{C})$ norm of $\Phi_s u$:$$\| \Phi_s u \|_{L^2(\mathcal{C})}^2 = \int_{\mathbb{R}} \int_{\mathbb{S}^{N-1}} |e^{\frac{N-2s}{2}t} u(e^t w)|^2 \, dw \, dt.$$Applying the inverse Emden-Fowler change of variables $r = e^t$, $dt = \frac{dr}{r}$, $x = r w$, and $dx = r^{N-1} \, dr \, dw$:$$\| \Phi_s u \|_{L^2(\mathcal{C})}^2 = \int_0^\infty \int_{\mathbb{S}^{N-1}} r^{N-2s} |u(rw)|^2  \, \frac{dr}{r} \, dw = \int_{\mathbb{R}^N} \frac{|u(x)|^2}{|x|^{2s}} \, dx = \| u \|_{L^2(\mathbb{R}^N; |x|^{-2s}\, dx)}^2.$$Since $C_c^\infty(\mathbb{R}^N \setminus \{0\})$ is dense in $L^2(\mathbb{R}^N; |x|^{-2s}dx)$, $\Phi$ extends to a global isometry.
    
    By definition, $T[u](x) = |x|^{-\frac{N-2}{2}} M_s (\Phi_s u) \left(\ln|x|, \frac{x}{|x|} \right)$. The operation $|x|^{-\frac{N-2}{2}} \psi \left(\ln|x|, \frac{x}{|x|} \right)$ is precisely $\Phi_1^{-1}[\psi]$. Thus, $T = \Phi_1^{-1} M_s \Phi_s$. 
    
\medskip 

    \textit{Proof of $(ii)$:} We analyze $P_s(\xi, \ell)$ for large arguments. By definition $$P_{s} (\xi, \ell) = 2^{2s} \left| \frac{\Gamma \left( \frac{N+2s +2\ell+ 2i \xi}{4} \right)}{\Gamma \left( \frac{N-2s+2 \ell+2i \xi}{4} \right)} \right|^{2} - C_{N,s}.$$Let $z = \frac{2\ell + 2i\xi}{4} = \frac{\ell}{2} + i\frac{\xi}{2}$. As $|z| \to \infty$, we apply the complex Stirling's approximation to the ratio of Gamma functions, which states that for bounded $a, b$:$$\left| \frac{\Gamma(z+a)}{\Gamma(z+b)} \right| \sim |z|^{\text{Re}(a-b)} \quad \text{as} \quad |z| \to \infty.$$Here, $a = \frac{N+2s}{4}$ and $b = \frac{N-2s}{4}$. We have $a - b = s$. Thus
    $$\left| \frac{\Gamma(z+a)}{\Gamma(z+b)} \right|^2 \sim |z|^{2s} = \left| \frac{\ell + i\xi}{2} \right|^{2s} = 2^{-2s} (\ell^2 + \xi^2)^s.$$Therefore, for large $\xi$ or $\ell$, the dominant term of $P_s(\xi, \ell)$ is $2^{2s} \cdot 2^{-2s} (\xi^2 + \ell^2)^s = (\xi^2 + \ell^2)^s$. The denominator of $\bar{m}(\xi, \ell)^2$ is $\xi^2 + \mu_\ell = \xi^2 + \ell^2 + \ell(N-2) \sim \xi^2 + \ell^2$.
    
    Taking the square root yields the desired asymptotic:
    $$\bar{m}(\xi, \ell) = \sqrt{\frac{P_s(\xi, \ell)}{\xi^2 + \mu_\ell}} \sim \frac{(\xi^2 + \ell^2)^{s/2}}{(\xi^2 + \ell^2)^{1/2}} = (\xi^2 + \ell^2)^{\frac{s-1}{2}}.$$Because $s \in (0,1)$, we have $s - 1 < 0$. The symbol decays at infinity and is therefore bounded from above. At the origin $\xi=0, \, \ell=0$, $P_s(0,0)=0$ and $\mu_0=0$. By Taylor expanding the Gamma functions (utilizing the Trigamma function $\psi'$, defined in \eqref{Trigamma function}), the limit $\lim_{\xi \to 0} \bar{m}(\xi, 0) = K_{N,s} > 0$ exists and is finite. Thus $\bar{m} \in L^\infty$. 

    \textit{Proof of $(iii)$:} From part $(i)$ it follows, $T = \Phi_1^{-1} M_s \Phi_s$, where $\Phi_s: L^2_s \to L^2(\mathcal{C})$ and $\Phi_1^{-1}: L^2(\mathcal{C}) \to L^2(\mathbb{R}^N, |x|^{-2} dx) :=  L^2_1 $ are isometries. Thus, the operator norm satisfies $$\| T \|_{L^2_s \to L^2_1} = \| M_s \|_{L^2(\mathcal{C}) \to L^2(\mathcal{C})}.$$Since $M_s$ acts diagonally in the Fourier-spherical domain via multiplication by $\bar{m}(\xi, \ell)$, its $L^2$ operator norm is simply the $L^\infty$ norm of its symbol
    $$\| M_s \|_{L^2 \to L^2} = \sup_{(\xi, \ell) \in \mathbb{R} \times \mathbb{N}_0} \bar{m}(\xi, \ell).$$ By part $(ii)$, $\bar{m}(\xi, \ell)$ is a bounded continuous function, implying $\|T\| < \infty$.
    
    To prove injectivity, suppose $T[u] = 0$ for some $u \in L^2_s$. Since $\Phi_1^{-1}$ is an isometry, $M_s \Phi_s u = 0$. In the Fourier domain, this means
    $$\bar{m}(\xi, \ell) (\widehat{\Phi_s u})(\xi, \ell, m) = 0, \quad \text{for almost every } (\xi, \ell, m).$$ Since the symbol $\bar{m}(\xi, \ell)$ is strictly positive for all $(\xi, \ell) \in \mathbb{R} \times \mathbb{N}_0$ (it only approaches 0 as $|\xi|, \, \ell \to \infty$), we must have $(\widehat{\Phi_s u})(\xi, \ell, m) = 0$ a.e. By the injectivity of the Fourier transform and the isometry of $\Phi_s$, this forces $u = 0$ a.e. Thus, $T$ is injective.

    \medskip 
    
    \textit{Proof of $(iv)$:} Let $u \in C_c^\infty(\mathbb{R}^N \setminus \{0\})$. We first evaluate how the cylindrical projection $\Phi_s$ interacts with the Euclidean dilation
    $$(\Phi_s D_\lambda u)(t, w) = e^{\frac{N-2s}{2}t} u(\lambda e^t w).$$Let $t' = t + \ln \lambda$. We can rewrite the projection in terms of a translation operator $\tau_h f(t) = f(t+h)$ on the cylinder
    $$(\Phi_s D_\lambda u)(t, w) = \lambda^{-\frac{N-2s}{2}} e^{\frac{N-2s}{2}(t+\ln \lambda)} u(e^{t+\ln \lambda} w) = \lambda^{\frac{2s-N}{2}} (\tau_{\ln \lambda} \Phi_s u)(t, w).$$The multiplier operator $M_s$ is defined via the Fourier transform with respect to $t$. Because Fourier multipliers commute with translations, we have $M_s \tau_{\ln \lambda} = \tau_{\ln \lambda} M_s$. Applying $M_s$:$$M_s (\Phi_s D_\lambda u) = \lambda^{\frac{2s-N}{2}} \tau_{\ln \lambda} (M_s \Phi_s u).$$Now we apply the inverse map $\Phi_1^{-1}$ to construct $T[D_\lambda u](x)$:
    \begin{align*}
       T[D_\lambda u](x) & = |x|^{-\frac{N-2}{2}} \left( M_s \Phi_s D_\lambda u \right) \left(\ln|x|, \frac{x}{|x|} \right)  \\ & = |x|^{-\frac{N-2}{2}} \lambda^{\frac{2s-N}{2}} \left( \tau_{\ln \lambda} M_s \Phi_s u \right) \left(\ln|x|, \frac{x}{|x|} \right) \\ & = |x|^{-\frac{N-2}{2}} \lambda^{\frac{2s-N}{2}} (M_s \Phi_s u)\left(\ln|x| + \ln \lambda, \frac{x}{|x|} \right).
    \end{align*}
    Notice that $|\lambda x|^{-\frac{N-2}{2}} = \lambda^{-\frac{N-2}{2}} |x|^{-\frac{N-2}{2}}$. Factoring this out, we obtain$$T[D_\lambda u](x) = \frac{\lambda^{\frac{2s-N}{2}}}{\lambda^{-\frac{N-2}{2}}} |\lambda x|^{-\frac{N-2}{2}} (M_s \Phi_s u)\left(\ln|\lambda x|, \frac{\lambda x}{|\lambda x|} \right).$$The fractional powers of $\lambda$ evaluate to $\frac{2s-N}{2} - \frac{2-N}{2} = \frac{2s-2}{2} = s-1$. Thus $$T[D_\lambda u](x) = \lambda^{s-1} T[u](\lambda x).$$

\medskip 

    \textit{Proof of $(v)$:} From part $(i)$, $\Phi_1 T[u] = M_s \Phi_s u$. We evaluate the $H^{1-s}(\mathcal{C})$ Sobolev norm of $M_s \Phi_s u$. In the spectral domain, the squared Sobolev norm is given by:
    \begin{align*}
       \| M_s \Phi_s u \|_{H^{1-s}(\mathcal{C})}^2 & = \sum_{\ell=0}^\infty \sum_{m=1}^{d_\ell} \int_{\mathbb{R}} (1 + \xi^2 + \mu_\ell)^{1-s} |(\widehat{M_s \Phi_s u})(\xi, \ell, m)|^2 d\xi \\ & = \sum_{\ell=0}^\infty \sum_{m=1}^{d_\ell} \int_{\mathbb{R}} (1 + \xi^2 + \mu_\ell)^{1-s} \bar{m}(\xi, \ell)^2 |(\widehat{\Phi_s u})(\xi, \ell, m)|^2 d\xi.
    \end{align*}
    From part $(ii)$, we established that $\bar{m}(\xi, \ell)^2 \sim (\xi^2 + \ell^2)^{s-1}$. Moreover, the eigenvalues of the sphere are $\mu_\ell = \ell(\ell+N-2) \sim \ell^2$. Thus, the aggregate weight in the integrand behaves asymptotically as $$(1 + \xi^2 + \mu_\ell)^{1-s} \bar{m}(\xi, \ell)^2 \sim (\xi^2 + \ell^2)^{1-s} (\xi^2 + \ell^2)^{s-1} = 1.$$ Because the combined symbol $(1 + \xi^2 + \mu_\ell)^{1-s} \bar{m}(\xi, \ell)^2$ is uniformly bounded over $\mathbb{R} \times \mathbb{N}_0$, there exists a constant $C > 0$ such that $$\| M_s \Phi_s u \|_{H^{1-s}(\mathcal{C})}^2 \leq C \sum_{\ell=0}^\infty \sum_{m=1}^{d_\ell} \int_{\mathbb{R}} |(\widehat{\Phi_s u})(\xi, \ell, m)|^2 d\xi = C \| \Phi_s u \|_{L^2(\mathcal{C})}^2.$$This proves that $M_s$ acts as a bounded operator from $L^2(\mathcal{C})$ to $H^{1-s}(\mathcal{C})$. Translating this to Euclidean space, $T$ strictly smooths the initial state $u \in L^2_s$, augmenting its local regularity profile by $1-s$ fractional derivatives in the logarithmic radial and angular coordinates.
\end{proof}
\medskip 

Now we are in a situation to prove Theorem~\ref{Theorem: Equivalence bw local and nonlocal via Emden Trans}.

\begin{proof}[\textbf{Proof of Theorem~\ref{Theorem: Equivalence bw local and nonlocal via Emden Trans}}]
We first begin the proof of $(i).$ Let $v=T[u]$. Applying the Emden-Fowler change of variables: $x=e^{t} w$ with $r=|x|= e^{t}$, $w \in \mathbb{S}^{N-1}$ and the measure $dx = e^{Nt} \, dt \, dw$. We define $\phi_{v}(t, w) = e^{(N-2)\frac{t}{2}} v(e^{t} w)$. We obtain 
\begin{equation*}
    \nabla v(x) = e^{-\frac{N}{2} t} \left[ \left( \partial_{t} \phi_{v} - \frac{N-2}{2} \phi_{v}  \right)  w + \nabla_{w} \phi_{v} \right].
\end{equation*}
Substituting this into the definition of $\delta_{2}(v)$, we get
\begin{equation*}
    \delta_{2}(v) = \int_{\mathbb{R}} \int_{\mathbb{S}^{N-1}} \left[ \left( \partial_{t} \phi_{v} - \frac{N-2}{2} \phi_{v}  \right)^{2} + |\nabla_{w} \phi_{v}|^{2} - \left( \frac{N-2}{2} \right)^{2} \phi^{2}_{v} \right]  \, dw \, dt.
\end{equation*}
Since the cross term $-(N-2)\phi_{v} \partial_{t} \phi_{v}$ integrates to $0$ over $\mathbb{R}$ as $\phi_{v}$ vanishes at infinity. Indeed,  $$I_{cross} = -(N-2) \int_{\mathbb{S}^{N-1}} \int_{\mathbb{R}} \phi_{v} \partial_{t} \phi_{v} \, dt \, dw = -\frac{N-2}{2} \int_{\mathbb{S}^{N-1}} \int_{\mathbb{R}} \partial_{t} (\phi_{v}^{2}) \, dt \, dw.$$We know $\phi_{v} \in H^{1}(\mathbb{R} \times \mathbb{S}^{N-1})$. By the trace theorem and Sobolev embedding on the cylinder, it follows $\phi_{v}$ is absolutely continuous with respect to $t$ and satisfies $$\lim_{t \to \pm \infty} \| \phi_{v}(t, \cdot) \|_{L^{2}(\mathbb{S}^{N-1})} = 0.$$Applying the Fundamental Theorem of Calculus along the $t$-variable, we obtain $$\int_{\mathbb{R}} \partial_{t} (\phi_{v}^{2}(t,w)) \, dt = \lim_{R \to \infty} \left( \phi_{v}^{2}(R, w) - \phi_{v}^{2}(-R, w) \right).$$ Integrating over the sphere $\mathbb{S}^{N-1}$, we arrive at$$\int_{\mathbb{S}^{N-1}} \left( \lim_{R \to \infty} \phi_{v}^{2}(R, w) - \lim_{R \to \infty} \phi_{v}^{2}(-R, w) \right) \, dw = \lim_{R \to \infty} \| \phi_{v}(R, \cdot) \|_{L^{2}}^{2} - \lim_{R \to \infty} \| \phi_{v}(-R, \cdot) \|_{L^{2}}^{2} = 0.$$
Therefore the above relation reduces to
    \begin{equation*}
    \delta_{2}(v) = \int_{\mathbb{R}} \int_{\mathbb{S}^{N-1}}  \left( |\partial_{t} \phi_{v} |^{2} + |\nabla_{w} \phi_{v}|^{2}  \right) \, dw \, dt.
\end{equation*}
Now, applying the Plancherel's theorem on $\mathbb{R}$ and the spectral theorem for $-\Delta_{\mathbb{S}^{N-1}}$, we obtain
\begin{equation}\label{eqn1}
    \delta_{2}(v) = \sum_{\ell=0}^{\infty} \sum_{m=1}^{d_{\ell}} 
\int_{\mathbb{R}} \left( \xi^{2} + \mu_{\ell} \right) |\widehat{\phi}_{v}(\xi, \ell, m)|^{2} \, d \xi.
\end{equation}

We define the function $\phi_{u}(t,w) := e^{\frac{N-2s}{2}t} u(e^{t}w)$. The celebrated result of Frank, Lieb and Seiringer \cite{FrankLieb2008} established that the fractional Hardy quadratic form can be explicitly diagonalized over the cylinder $\mathcal{C}$. The quadratic form evaluates to the non-local operator $H_{s}$ on $\mathcal{C}$ minus the ground state eigenvalues, i.e.,
\begin{equation*}
    \delta_{s,2}(u) = < \phi_{u}, H_{s} \phi_{u} >_{L^{2}(\mathcal{C})} - C_{N,s} \| \phi_{u} \|^{2}_{L^{2}(\mathcal{C})}.
\end{equation*}
By passing to the Fourier-spherical basis, the Mellin transform yields the exact multiplier
\begin{equation*}
    \delta_{s,2} (u) = \sum_{\ell = 0}^{\infty} \sum_{m=1}^{d_{\ell}} \int_{\mathbb{R}} P_{s} (\xi, \ell) |\widehat{\phi}_{u}(\xi, \ell,m)|^{2} \, d \xi,
\end{equation*}
where $P_{s}$ is defined as in \eqref{PS}. 

By the definition of the operator $T$, the cylindrical projection of $v$ is exactly $\phi_{v}= M_{s} \phi_{u}$. In the frequency space, we have
\begin{equation*}
    \widehat{\phi}_{v}(\xi, \ell, m) =  \sqrt{\frac{P_{s}(\xi, \ell)}{ \xi^{2}+ \mu_{\ell}}} \widehat{\phi}_{u}(\xi, \ell, m).
\end{equation*}
Substituting this into the equation \eqref{eqn1}, we obtain
\begin{align*}
    \delta_2(v) &= \sum_{\ell =0}^{\infty} \sum_{m=1}^{d_{\ell}} \int_{\mathbb{R}} (\xi^{2}+ \mu_{\ell}) \left| \sqrt{\frac{P_{s}(\xi, \ell)}{ \xi^{2}+ \mu_{\ell}}} \widehat{\phi}_{u}(\xi, \ell, m) \right|^{2} \, d \xi \\ & = \sum_{\ell = 0}^{\infty} \sum_{m=1}^{d_{\ell}} \int_{\mathbb{R}} P_{s} (\xi, \ell) |\widehat{\phi}_{u}(\xi, \ell,m)|^{2} \, d \xi \\ & = \delta_{s,2}(u) . 
\end{align*}
This proves part $(i)$.

\textit{Proof of $(ii)$:} Let $u(x) = \omega_{s}(x) = |x|^{-(N-2s)/2}$. The cylindrical projection of $u$ is$$\phi_{u}(t,w) = e^{\frac{N-2s}{2} t} e^{- \frac{N-2s}{2}t} = 1.$$The constant function $1$ is independent of $t$ and $w$. In the Fourier spherical domain, it is perfectly localized at the zero mode$$\widehat{\phi}_{u} (\xi, \ell,m) = C \delta(\xi) \delta_{\ell, 0},$$where $\delta(\xi)$ is the Dirac distribution.

Applying the operator $M_{s}$ to $\phi_{u} =1$, we have$$\widehat{\phi}_{v}(\xi, \ell, m) = \sqrt{ \frac{P_{s}(\xi, \ell)}{\xi^{2}+ \mu_{\ell}}} C \delta(\xi) \delta_{\ell, 0}.$$ Because of the support of the Dirac Delta, we only need to evaluate the multiplier limit as $\xi \to 0$ for $\ell=0$ (noting that $\mu_{0}=0$):$$\lim_{\xi \to 0} \bar{m}(\xi, 0) = \lim_{\xi \to 0} \sqrt{ \frac{P_{s}(\xi, 0)}{\xi^{2}}}.$$By definition of $P_{s}$, we have $P_{s}(0,0)=0$. Let$$f(z)=\left| \Gamma (a+iz) \right|^{2} = \Gamma (a+iz) \Gamma(a-iz),$$where $a \in \mathbb{R}$. Then, differentiating with respect to $z$, we obtain $$f'(z) = i \Gamma' (a+iz) \Gamma (a-iz) - i \Gamma (a+iz) \Gamma' (a-iz).$$Therefore, $f'(0)=0$. Differentiating $f'$ again and evaluating at $z =0$, we obtain$$f''(0)= -\Gamma''(a) \Gamma(a) + \Gamma'(a)^{2} + \Gamma'(a)^{2} - \Gamma(a) \Gamma''(a) = 2(\Gamma'(a))^{2} - 2\Gamma(a)\Gamma''(a).$$Recall the definition of the trigamma function
\begin{equation}\label{Trigamma function}
   \psi'(a) = \frac{d}{da} \left( \frac{\Gamma'(a)}{\Gamma(a)} \right) = \frac{\Gamma''(a)\Gamma(a) - (\Gamma'(a))^{2}}{\Gamma(a)^{2}}. 
\end{equation}
Thus, we can concisely write:$$f''(0) = -2 \Gamma(a)^{2} \psi'(a).$$Let $a_{1} = (N+2s)/4$ and $a_{2} = (N-2s)/4$. To find the Taylor series expansion of $P_{s}(\xi,0)$ around $\xi=0$, we evaluate the quotient $h(z) = \frac{f_1(z)}{f_2(z)}$ where $f_1, f_2$ correspond to parameters $a_1, a_2$. Using the chain rule for $z = \xi/2$, the second derivative evaluated at $0$ yields $$P_{s}(\xi,0) \approx 2^{2s} \frac{1}{2} \left[ \frac{f_1''(0)f_2(0) - f_1(0)f_2''(0)}{f_2(0)^2} \right] \left( \frac{\xi}{2} \right)^2.$$Substituting $f''(0) = -2\Gamma(a)^2\psi'(a)$ and simplifying with the sharp constant $C_{N,s} = 2^{2s} \frac{\Gamma(a_1)^2}{\Gamma(a_2)^2}$:$$P_{s}(\xi,0) \approx \frac{\xi^{2}}{4} C_{N,s} \left( \psi'(a_{2}) - \psi'(a_{1}) \right).$$Since $\psi'$ is strictly monotonically decreasing on $(0, \infty)$ and $a_2 < a_1$, the term $\psi'(a_2) - \psi'(a_1)$ is strictly positive.

Taking the limit, we obtain$$\lim_{\xi \to 0} \frac{P_{s}(\xi,0)}{\xi^{2}} = \frac{1}{4} C_{N,s} \left( \psi'\left(\frac{N-2s}{4} \right) - \psi'\left(\frac{N+2s}{4}\right) \right) =  K^{2}_{N,s}.$$Thus, the multiplier evaluates exactly to $K_{N,s}$ at the origin.

Since the multiplier acts on the constant strictly as scalar multiplication by $K_{N,s}$, we have$$\phi_{v}(t, w) = K_{N,s} \mathbf{1}= K_{N,s}.$$Mapping this back to Euclidean space via the definition of $T$, we obtain$$v(x) = |x|^{-(N-2)/2} \phi_{v} \left( \ln|x|, \frac{x}{|x|} \right) = K_{N,s} |x|^{-(N-2)/2}.$$Therefore, $T[\omega_{s}] = K_{N,s} \omega_{1}$.
\end{proof}


\medskip 

Now we prove Theorem \ref{qspf}.  

\begin{proof}[\textbf{Proof of Theorem \ref{qspf}}]
    Let $v = T[u]$. By the spectral equivalence over the Emden-Fowler cylinder, we have $\delta_{s,2}(u) = \delta_2(v)$, where $\delta_2(v)$ is the local Hardy deficit. Moreover,  Theorem \ref{Theorem: Improvement of stability exponent C and F result} applied to the case $p=2$, the local Hardy inequality with remainder term yields
\begin{equation*}
 \delta_{2}(v)  \geq C (d_2(v))^{4} \int_{\mathbb{R}^{N}} \frac{|v(x)|^2}{|x|^2} \, dx  .
\end{equation*}
Substituting $v = T[u]$ back into the right-hand side. By the definition of our pullback weight space, we have
$$\int_{\mathbb{R}^{N}} \frac{|T[u]|^2}{|x|^2} dx = \|u\|_{\mathcal{W}_{N,s}}^2.$$ By the linearity of $T$ and the extremizer mapping $T \left[\frac{a}{K_{N,s}} w_s \right] = v_a$, the local distance $d_2(T[u])$ is exactly equivalent to the fractional distance $\mathfrak{D}_{s,2}(u)$. Arranging the terms yields $\delta_{s,2}(u) = \delta_{2}(T[u]) \ge C \|u\|_{\mathcal{W}_{N,s}}^2 \left( \mathfrak{D}_{s,2}(u)\right)^{4}$. This proves the theorem.
\end{proof} 

    

\medskip

\section{Application: Fractional Hardy-Heisenberg Uncertainty Principle.}\label{Section : 7}

As an application of the transformation $T$, defined in \eqref{Defn: The operator T}, we derive a fractional Hardy--Heisenberg uncertainty principle. 

The Emden--Fowler change of variables \(t=\ln|x|\) converts radial scaling  in \(\mathbb{R}^N\) into translations on the cylinder \(\mathcal{C}=\mathbb{R}\times\mathbb{S}^{N-1}\). In this framework, the classical Hardy deficit naturally plays the role of a momentum variance. Since the transformation \(T\) preserves the Hardy deficit exactly, this Heisenberg-type structure can be transferred from the classical setting to the fractional one.

We now state the result. Let \(N\geq 3\) and \(s\in(0,1)\). We retain the notation for the cylinder \(\mathcal{C}\), the classical deficit \(\delta_2\), the fractional deficit \(\delta_{s,2}\), and the deficit-preserving transformation \(T\) introduced earlier. To this end, we define two new weighted nonlocal capacities for a function 
$u \in \mathcal{S}(\mathbb{R}^{N})$:

{\bf The transformed Hardy mass}: $$M_{T}(u) := \int_{\mathbb{R}^{N}} \frac{|T[u](x)|^{2}}{|x|^{2}} \, dx.$$

{\bf The transformed logarithmic variance}:$$V_{T}(u) := \int_{\mathbb{R}^{N}} (\ln|x|)^{2} \frac{|T[u](x)|^{2}}{|x|^{2}} \, dx.$$ 

\begin{rem}
The right hand side of the above two quantities  is finite. Indeed, Under the Emden-Fowler change of variable, $t=\ln|x|$ and $\displaystyle \theta=\frac{x}{|x|}$, the integral reduces to $$\displaystyle \int_{\mathbb{R}^N}\frac{|T[u](x)|^2}{|x|^2}dx=\|M_s\bar{w}\|_{L^2(\mathcal{C})}^2,$$ where $\mathcal{C}=\mathbb{R}\times\mathbb{S}^{N-1}$ and $\bar{w}(t,\theta)=e^{t(\frac{N-2s}{2})}u(e^t\theta)$.
Given $u\in\mathcal{S}(\mathbb{R}^N)$ and $N>2s$, the prefactor ensures $\bar{w}(t,\theta)$ exhibits exponential decay as $t\to\pm\infty$, guaranteeing $\bar{w}\in\mathcal{S}(\mathcal{C})$.
Stirling's asymptotic expansion dictates that the spectral multiplier admits a strict polynomial bound $\displaystyle \frac{P_s(\xi,\ell)}{\xi^2+\mu_\ell}\le C(1+\xi^2+\ell^2)^K$, for some $K>0$.
By Plancherel theorem on $\mathcal{C}$, the integral evaluates to $\displaystyle \sum_{\ell=0}^\infty\sum_{m=1}^{d_\ell}\int_{\mathbb{R}}\frac{P_s(\xi,\ell)}{\xi^2+\mu_\ell}|\widehat{w}(\xi,\ell,m)|^2d\xi<\infty$, which converges absolutely due to the rapid decay of $\widehat{w}$. A similar argument shows that the transformed logarithmic variance is finite as well.
\end{rem}

\medskip 

To build the fractional result, we first establish a sharp Heisenberg-type uncertainty principle for the classical Hardy deficit on Euclidean space.

\begin{lemma}[Classical Hardy--Heisenberg Uncertainty Principle]
\label{preliminary lemma}
Let \(N \geq 4\). Then, for every \(v \in \mathcal{S}(\mathbb{R}^N)\), the following inequality holds
\begin{equation}
   \delta_{2}(v)
\left(
\int_{\mathbb{R}^{N}}
(\ln |x|)^{2}
\frac{|v(x)|^{2}}{|x|^{2}}
\, dx
\right)
\geq
\frac{1}{4}
\left(
\int_{\mathbb{R}^{N}}
\frac{|v(x)|^{2}}{|x|^{2}}
\, dx
\right)^{2}. 
\end{equation}
\end{lemma}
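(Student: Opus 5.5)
We pass to the Emden--Fowler cylinder, as in the proof of Theorem~\ref{Theorem: Equivalence bw local and nonlocal via Emden Trans}(i). Set $\phi(t,w):=e^{\frac{N-2}{2}t}v(e^t w)$, i.e.\ $\phi=\Phi_1 v$. The isometry computation of Theorem~\ref{properties of T}(i) with weight parameter $1$ gives
\[
\int_{\mathbb{R}^N}\frac{|v|^2}{|x|^2}\,dx=\int_{\mathbb{S}^{N-1}}\int_{\mathbb{R}}|\phi|^2\,dt\,dw .
\]
The same substitution, with $\ln|x|=t$, gives
\[
\int_{\mathbb{R}^N}(\ln|x|)^2\frac{|v|^2}{|x|^2}\,dx=\int_{\mathbb{S}^{N-1}}\int_{\mathbb{R}}t^2|\phi|^2\,dt\,dw .
\]
The computation in the proof of Theorem~\ref{Theorem: Equivalence bw local and nonlocal via Emden Trans}(i) shows
\[
\delta_2(v)=\iint(|\partial_t\phi|^2+|\nabla_w\phi|^2)\,dt\,dw\ \ge\ \iint|\partial_t\phi|^2\,dt\,dw .
\]
It therefore suffices to prove the one-dimensional-in-$t$ Heisenberg inequality on the cylinder,
\[
\Big(\iint|\partial_t\phi|^2\Big)\Big(\iint t^2|\phi|^2\Big)\ge\tfrac14\Big(\iint|\phi|^2\Big)^2 .
\]

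For this, fix $w$ and integrate by parts in $t$:
\[
\int_{\mathbb{R}}|\phi|^2\,dt=\int_{\mathbb{R}}\partial_t(t)\,|\phi|^2\,dt=-2\int_{\mathbb{R}}t\,\phi\,\partial_t\phi\,dt .
\]
If $v$ is complex, $\phi\,\partial_t\phi$ is replaced by $\mathrm{Re}(\bar\phi\,\partial_t\phi)$. Integrating over $\mathbb{S}^{N-1}$ and applying Cauchy--Schwarz on $\mathcal{C}$ gives
\[
\iint|\phi|^2\le 2\Big(\iint t^2|\phi|^2\Big)^{1/2}\Big(\iint|\partial_t\phi|^2\Big)^{1/2}.
\]
Squaring yields the claim. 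Equality requires $\nabla_w\phi=0$ and $\partial_t\phi=-\alpha t\phi$, i.e.\ $\phi=Ae^{-\alpha t^2/2}$, which is consistent with the Gaussian extremals announced in Theorem~\ref{HHUP}.

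The main obstacle is justifying the boundary terms and finiteness for $v\in\mathcal{S}(\mathbb{R}^N)$. As $t\to+\infty$, $\phi=e^{\frac{N-2}{2}t}v(e^tw)$ decays rapidly because $v$ is Schwartz. As $t\to-\infty$, $|\phi|\lesssim e^{\frac{N-2}{2}t}$ and $|\partial_t\phi|\lesssim e^{\frac{N-2}{2}t}$, since $\partial_t[v(e^tw)]=e^t w\cdot\nabla v$ is bounded. So $t|\phi|^2\to0$ at both ends, $t^2|\phi|^2$ is integrable, and $\phi\in H^1(\mathcal{C})$. This last fact is what legitimizes dropping the cross term in the deficit identity, exactly as in the proof of Theorem~\ref{Theorem: Equivalence bw local and nonlocal via Emden Trans}(i). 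The hypothesis $N\ge4$ only adds room; $N\ge3$ already gives the needed decay. I would do the integration by parts on $[-R,R]$ and let $R\to\infty$ using these bounds.
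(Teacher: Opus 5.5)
Your proposal is correct and follows the paper's proof essentially step for step. Both arguments pass to the cylinder via $\phi=\Phi_1 v$, use $\delta_2(v)=\iint(|\partial_t\phi|^2+|\nabla_w\phi|^2)$, drop the angular term, integrate $\partial_t(t)\,|\phi|^2$ by parts and apply Cauchy--Schwarz; the paper applies Cauchy--Schwarz first in $t$ and then in $w$, while you apply it once on $\mathcal{C}$, which is equivalent. The only real difference is in how the Schwartz class is reached: the paper proves the inequality for $v\in C_c^\infty(\mathbb{R}^N\setminus\{0\})$ and then invokes a ``standard approximation,'' whereas you control the boundary terms for Schwartz $v$ directly through the exponential decay of $\phi$ and $\partial_t\phi$ as $t\to\pm\infty$, which is cleaner. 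Your remark that $N\ge 3$ suffices is also right, since $\int_0^1(\ln r)^2 r^{N-3}\,dr<\infty$ already for $N=3$, contrary to the paper's claim that finiteness of the logarithmic moment requires $N\ge 4$.
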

\begin{proof}
First let us assume $v \in C_c^{\infty}(\mathbb{R}^N \setminus \{0\}).$
We apply the Emden-Fowler change of variables $x=e^{t} w$ with $r=|x|= e^{t}$, $w \in \mathbb{S}^{N-1}$, and it follows $dx = e^{Nt} \, dt \, dw$. As shown in our fundamental identity, defining $\phi_{v}(t, w) = e^{(N-2)\frac{t}{2}} v(e^{t} w)$ yields $$\delta_{2}(v) = \int_{\mathbb{S}^{N-1}} \int_{\mathbb{R}}  \left( |\partial_{t} \phi_{v} |^{2} + |\nabla_{w} \phi_{v}|^{2}  \right) \, dt \, dw.$$Since $|\nabla_{w} \phi_{v}|^{2} \geq 0$, we can trivially bound the deficit from below by its purely radial derivative component:$$\delta_{2}(v) \geq \int_{\mathbb{S}^{N-1}} \int_{\mathbb{R}} |\partial_{t} \phi_{v}|^{2} \, dt \, dw.$$

Since \(v \in C^{\infty}_{c}(\mathbb{R}^{N}\setminus\{0\})\), the associated cylindrical function \(\phi_v(t,w)\) is compactly supported with respect to the variable \(t\in\mathbb{R}\). For any fixed $w \in \mathbb{S}^{N-1}$, we apply integration by parts to the $L^{2}(\mathbb{R})$ norm of $\phi_{v}$: $$\int_{\mathbb{R}} |\phi_{v}|^{2} \, dt = \int_{\mathbb{R}} |\phi_{v}|^{2} \partial_{t}(t) \, dt = \left[ t \, |\phi_{v}|^{2} \right]_{-\infty}^{\infty} - 2 \int_{\mathbb{R}} t \phi_{v} \partial_{t} \phi_{v} \, dt.$$

The second term yields:
$$
\int_{\mathbb{R}} |\phi_{v}|^{2} \, dt = -2 \int_{\mathbb{R}} t \phi_{v} \partial_{t} \phi_{v} \, dt.
$$

Taking the absolute value and applying the Cauchy-Schwarz inequality over $\mathbb{R}$, we obtain $$\int_{\mathbb{R}} |\phi_{v}|^{2} \, dt \leq 2 \left( \int_{\mathbb{R}} t^{2} |\phi_{v}|^{2} \, dt \right)^{\frac{1}{2}} \left( \int_{\mathbb{R}} |\partial_{t} \phi_{v}|^{2} \, dt \right)^{\frac{1}{2}}.$$We now integrate this inequality over the angular variables $w \in \mathbb{S}^{N-1}$:$$\int_{\mathbb{S}^{N-1}} \left( \int_{\mathbb{R}} |\phi_{v}|^{2} \, dt \right) \, dw \leq 2 \int_{\mathbb{S}^{N-1}} \left( \int_{\mathbb{R}} t^{2} |\phi_{v}|^{2} \, dt \right)^{\frac{1}{2}} \left( \int_{\mathbb{R}} |\partial_{t} \phi_{v}|^{2} \, dt \right)^{\frac{1}{2}} \, dw.$$ Applying the Cauchy-Schwarz inequality a second time, now over the space $L^{2}(\mathbb{S}^{N-1})$: $$\int_{\mathcal{C}} |\phi_{v}|^{2} \, dt \, dw \leq 2 \left( \int_{\mathcal{C}} t^{2} |\phi_{v}|^{2} \, dt \, dw \right)^{\frac{1}{2}} \left( \int_{\mathcal{C}} |\partial_{t} \phi_{v}|^{2} \, dt \, dw \right)^{\frac{1}{2}}.$$Squaring both sides and substituting the lower bound for $\delta_2(v)$ yields the uncertainty principle on the cylinder:$$\frac{1}{4} \left( \int_{\mathcal{C}} |\phi_{v}|^{2} \, dt \, dw \right)^{2} \leq \left( \int_{\mathcal{C}} t^{2} |\phi_{v}|^{2} \, dt \, dw \right)\delta_2(v).$$To conclude the lemma, we translate the integral operators back to Euclidean space. Using $t = \ln|x|$ and reversing the measure $dt \, dw = |x|^{-N} \, dx$:$$\int_{\mathcal{C}} |\phi_{v}|^{2} \, dt \, dw = \int_{\mathbb{R}^{N}} |x|^{N-2} |v(x)|^{2} |x|^{-N} \, dx = \int_{\mathbb{R}^{N}} \frac{|v(x)|^{2}}{|x|^{2}} \, dx.$$ Similarly, introducing the $t^{2}$ weight:$$\int_{\mathcal{C}} t^{2} |\phi_{v}|^{2} \, dt \, dw = \int_{\mathbb{R}^{N}} (\ln|x|)^{2} |x|^{N-2} |v(x)|^{2} |x|^{-N} \, dx = \int_{\mathbb{R}^{N}} (\ln|x|)^{2} \frac{|v(x)|^{2}}{|x|^{2}} \, dx.$$Substituting  back into the squared cylinder inequality completes the proof of Lemma for smooth compactly supported function. Now We shall extend the proof for functions in 
$\mathcal{S}(\mathbb{R}^N).$ We first observe  finiteness of the weighted integrals for $v \in \mathcal{S}(\mathbb{R}^N).$

Let $v \in \mathcal{S}(\mathbb{R}^N)$. Then all the derivatives decay faster than any polynomial power at infinity, in particular $v, \nabla v \in L^2(\mathbb{R}^N).$ Near the origin $v$ is smooth, hence it is bounded on $B_1.$ Therefore,
\begin{equation}\label{hardy}
    \int_{|x|<1}\frac{|v(x)|^2}{|x|^2}\, \mathrm{d}x\leq \|v\|^2_{L^{\infty}({B_1})}\int_{|x|<1} |x|^{-2}\, \mathrm{d}x.
\end{equation}
    The RHS of \eqref{hardy} is finite if and only if $N\geq 3$.

    Similarly,
    \begin{equation}\label{log-hardy}
        \int_{|x|<1}(\ln |x|)^2\frac{|v(x)|^2}{|x|^2}\, \mathrm{d}x\leq \|v\|^2_{L^{\infty}(B_1)}|\mathbb{S}^{N-1}|\int_0^1 (\ln r)^2r^{N-3}\, \mathrm{d}r.
    \end{equation}
    The RHS of \eqref{log-hardy} is finite if and only if $N \geq 4$. At infinity we use the decay estimates for the Schwartz functions to conclude: 
    \begin{equation*}
        \int_{|x|>1}\frac{|v(x)|^2}{|x|^2}\,\mathrm{d}x<\infty,\quad \int_{|x|>1}(\ln |x|)^2\frac{|v(x)|^2}{|x|^2}\,\mathrm{d}x<\infty.
    \end{equation*}
    Hence both weighted moments are finite, and therefore $\delta_2(v)$ is well defined and finite. Rest of the arguments follows from the standard approximation arguments. This completes the proof.
\end{proof}

We now state the new fractional uncertainty principle. Because the fractional Hardy deficit strictly lacks a classical chain rule or an easy integration-by-parts formula, proving this directly in the fractional Euclidean space would be difficult, and requires our multiplier transformation $T$.

\begin{theorem}[Fractional Hardy--Heisenberg Uncertainty Principle]
\label{thm:fractional-hardy-heisenberg}
Let \(N \geq 4\), \(0<s<1\), and let 
\(u \in \mathcal{S}(\mathbb{R}^{N})\). Then
\begin{equation}\label{eq:fractional-hardy-heisenberg}
    \delta_{s,2}(u)\, V_{T}(u)
    \geq
    \frac{1}{4}\bigl(M_{T}(u)\bigr)^{2}.
\end{equation}
In particular, the fractional Hardy deficit controls the logarithmic dispersion of the nonlocal transform \(T[u]\).
\end{theorem}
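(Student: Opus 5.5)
The plan is to transfer the classical uncertainty principle of Lemma~\ref{preliminary lemma} through the deficit-preserving map $T$. Set $v := T[u]$. By Theorem~\ref{Theorem: Equivalence bw local and nonlocal via Emden Trans}(i), $\delta_{s,2}(u) = \delta_2(v)$. By definition,
\[
M_T(u) = \int_{\mathbb{R}^N} \frac{|v|^2}{|x|^2}\,dx, \qquad V_T(u) = \int_{\mathbb{R}^N} (\ln|x|)^2 \frac{|v|^2}{|x|^2}\,dx.
\]
So the inequality \eqref{eq:fractional-hardy-heisenberg} is exactly the conclusion of Lemma~\ref{preliminary lemma} applied to $v$. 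The one caveat is that $v$ need not be a Schwartz function on $\mathbb{R}^N$, so the lemma cannot be quoted verbatim.

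To avoid this regularity issue, I would argue directly on the cylinder rather than in $\mathbb{R}^N$. Write $\phi_u := \Phi_s u$ and $\phi_v := \Phi_1 v = M_s \phi_u$, using the factorization $T = \Phi_1^{-1} M_s \Phi_s$ of Theorem~\ref{properties of T}(i). The steps are:

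\emph{Step 1.} Because $\Phi_1$ is an isometry from $L^2(|x|^{-2}dx)$, we have $M_T(u) = \|\phi_v\|^2_{L^2(\mathcal C)}$ and $V_T(u) = \|t\,\phi_v\|^2_{L^2(\mathcal C)}$.

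\emph{Step 2.} From the proof of Theorem~\ref{Theorem: Equivalence bw local and nonlocal via Emden Trans}(i),
\[
\delta_{s,2}(u) = \sum_{\ell,m} \int_{\mathbb{R}} (\xi^2+\mu_\ell)\,|\widehat{\phi_v}|^2\,d\xi \;\geq\; \|\partial_t \phi_v\|^2_{L^2(\mathcal C)}.
\]

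\emph{Step 3.} Apply the one-dimensional Heisenberg inequality $\|f\|_2^2 \le 2\|tf\|_2\,\|f'\|_2$ on $\mathbb{R}$ for each spherical mode, or for each fixed $w$. Then use Cauchy--Schwarz over $\mathbb{S}^{N-1}$ exactly as in the proof of Lemma~\ref{preliminary lemma}, and square. This yields $\tfrac14 M_T(u)^2 \le V_T(u)\,\|\partial_t\phi_v\|^2 \le V_T(u)\,\delta_{s,2}(u)$.

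The main obstacle is justifying Step 3 for $\phi_v$. That is, we need $\phi_v$, $t\phi_v$ and $\partial_t \phi_v$ in $L^2(\mathcal C)$, and the boundary term $[t|\phi_v|^2]_{-\infty}^{\infty}$ must vanish. In the Fourier variable the Heisenberg inequality only needs $\widehat{\phi_v}(\cdot,\ell,m) \in H^1_\xi$ and $\xi\widehat{\phi_v} \in L^2$, because $t$ corresponds to $i\partial_\xi$. I would check this as follows.

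First, $\phi_u \in \mathcal{S}(\mathcal C)$: it decays exponentially as $t \to -\infty$ since $N > 2s$, and rapidly as $t\to+\infty$, as in the Remark preceding Lemma~\ref{preliminary lemma}. Hence $\widehat{\phi_u}$ is smooth and rapidly decaying in $(\xi,\ell)$.

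Second, the multiplier $\bar m$ is bounded, and by Stirling's formula its $\xi$-derivative has polynomial growth. Near $(\xi,\ell)=(0,0)$, $\bar m(\xi,0)$ is smooth: $P_s(\xi,0)$ is even and analytic, vanishes to second order with positive coefficient $K_{N,s}^2$ (Theorem~\ref{Theorem: Equivalence bw local and nonlocal via Emden Trans}(ii)), so $\bar m(\xi,0) = \sqrt{P_s(\xi,0)/\xi^2}$ is an analytic positive function near $0$. For $\ell \ge 1$ the denominator $\xi^2+\mu_\ell$ is bounded below, so $\bar m$ is smooth there.

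Therefore $\partial_\xi(\bar m\,\widehat{\phi_u}) \in L^2$, so $t\phi_v \in L^2$, and finiteness of $V_T$ follows. The integration by parts in $\xi$ is then legitimate. If desired, equality analysis would follow from the equality case of the 1D Heisenberg inequality (Gaussians in $t$, angular-constant), but that is not needed for \eqref{eq:fractional-hardy-heisenberg}.
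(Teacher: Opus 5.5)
Your proposal is correct and is essentially the paper's argument: the paper sets $v=T[u]$, applies Lemma~\ref{preliminary lemma} to $v$, and substitutes $\delta_2(T[u])=\delta_{s,2}(u)$, which is exactly your Steps~1--3 (cylinder mass and $t$-moment, $\delta_{s,2}(u)\ge\|\partial_t\phi_v\|^2_{L^2(\mathcal{C})}$, one-dimensional Heisenberg plus Cauchy--Schwarz). The only difference is in your favour: the paper quotes the lemma for $T[u]$ even though, as you note, $T[u]$ is in general not in $\mathcal{S}(\mathbb{R}^N)$, and your direct check that $\phi_v,\,t\phi_v,\,\partial_t\phi_v\in L^2(\mathcal{C})$ supplies that missing justification; you should also state that $P_s(\xi,\ell)>0$ for $(\xi,\ell)\neq(0,0)$, since this is what makes $\bar m=\sqrt{P_s/(\xi^2+\mu_\ell)}$ smooth away from the origin.
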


\begin{proof}
Let $u \in \mathcal{S}(\mathbb{R}^{N}).$ Since $T$ is a well-defined transformation mapping $\mathcal{S}(\mathbb{R}^{N})$ to $L^{2}_{\text{loc}} (\mathbb{R}^{N} \setminus \{ 0 \})$, we define $v(x) := T[u](x)$. 

Applying Lemma~\ref{preliminary lemma} directly to the transformed function $v = T[u]$, we obtain
$$\delta_{2}(T[u]) \; \left( \int_{\mathbb{R}^{N}} (\ln|x|)^{2} \frac{|T[u](x)|^{2}}{|x|^{2}} \, dx \right) \geq \frac{1}{4} \left( \int_{\mathbb{R}^{N}} \frac{|T[u](x)|^{2}}{|x|^{2}} \, dx \right)^{2}.$$ \par By the definitions of the transformed capacities $V_{T}(u)$ and $M_{T}(u)$, this simplifies exactly to:$$\delta_{2}(T[u]) V_{T}(u) \geq \frac{1}{4} \left( M_{T}(u) \right)^{2}.$$Finally, we invoke our previously proven deficit-preserving identity, which states that $\delta_2(T[u]) = \delta_{s,2}(u)$. Substituting the fractional deficit into the left-hand side yields:$$\delta_{s,2}(u) V_{T}(u) \geq \frac{1}{4} \left( M_{T}(u) \right)^{2}.$$
This completes the proof.
\end{proof}


Now we turn to address the optimality of the constant $\frac{1}{4}$ and the existence of extremals. We state the following theorem: 

\begin{theorem}[Sharpness and Extremizers of the Fractional Hardy-Heisenberg Inequality]\label{thmsharp}

Let \(N \geq 4\), \(0<s<1\), and let 
\(u \in \mathcal{S}(\mathbb{R}^{N})\). Let the  constant in the Fractional Hardy--Heisenberg Uncertainty Principle be denoted by
\[
K := \frac{1}{4}.
\] Then
\begin{itemize}
    \item[(i)]  The constant $K = \frac{1}{4}$ is optimal, and is attained for the Gaussian profiles,

$$
\phi(t,w)\;=\; A e^{-\alpha t^2} \ \mbox{for} \ A \in \mathbb{R} \setminus \{0\} \ \mbox{and} \ \alpha > 0.
$$
    
\item[(ii)] For every $u \in C^{\infty}_{c}(\mathbb{R}^{N})$, the inequality is strict:$$\delta_{s,2}(u) \cdot V_{T}(u) > \frac{1}{4} \left( M_{T}(u) \right)^{2}.$$
    
\end{itemize}
\end{theorem}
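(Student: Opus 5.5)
The plan is to work entirely on the cylinder $\mathcal{C}$, where by the proof of Theorem~\ref{Theorem: Equivalence bw local and nonlocal via Emden Trans}(i) and Lemma~\ref{preliminary lemma} the three quantities read
\[
\delta_{s,2}(u)=\int_{\mathcal{C}}\big(|\partial_t\phi_v|^2+|\nabla_w\phi_v|^2\big),\quad V_T(u)=\int_{\mathcal{C}}t^2|\phi_v|^2,\quad M_T(u)=\int_{\mathcal{C}}|\phi_v|^2,
\]
with $\phi_v=M_s\Phi_s u$. The inequality of Theorem~\ref{thm:fractional-hardy-heisenberg} was obtained by two estimates. First we dropped the angular energy. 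Second we applied Cauchy--Schwarz to $\int|\phi_v|^2=-2\int t\phi_v\partial_t\phi_v$, first in $t$ and then over $\mathbb{S}^{N-1}$. Part (i) will follow by exhibiting profiles for which every step is an equality. Part (ii) will follow by showing that for $u\in C_c^\infty$ at least one step is strict.

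For (i), take $\phi(t,w)=Ae^{-\alpha t^2}$. Since it is independent of $w$, the angular term vanishes. We also have $\partial_t\phi=-2\alpha t\phi$, which is proportional to $t\phi$ with the same constant for every $w$, so both Cauchy--Schwarz steps are equalities. A direct Gaussian computation confirms this: $\int t^2\phi^2\cdot\int(\partial_t\phi)^2=\tfrac14(\int\phi^2)^2$. It remains to place $\phi$ in the range of the transform. By Theorem~\ref{properties of T}(ii) the symbol $\bar m$ is positive and smooth near the $\ell=0$ mode. We would therefore define $\phi_u$ through $\widehat{\phi_u}(\xi,0)=\widehat{\phi}(\xi)/\bar m(\xi,0)$, and then set $u=\Phi_s^{-1}\phi_u$, which is an element of $L^2(\mathbb{R}^N;|x|^{-2s}dx)$. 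The main obstacle is that $1/\bar m(\xi,0)$ grows like $|\xi|^{1-s}$, so $\phi_u$ is not Schwartz on $\mathcal{C}$ in general and $u$ need not lie in $\mathcal{S}(\mathbb{R}^N)$. If that happens, I would argue optimality by approximation. Take Schwartz $u_n$ with $\Phi_s u_n\to\phi_u$ in the weighted $H^{1-s}$ sense: truncate in $\xi$, multiply by a cutoff in $t$, and mollify. Then $M_s\Phi_s u_n\to\phi$ in $H^1(\mathcal{C})$ and in $L^2(t^2\,dt\,dw)$, by boundedness of the symbol (Theorem~\ref{properties of T}(iii),(v)). The ratio $\delta_{s,2}V_T/M_T^2$ therefore tends to $\tfrac14$, which shows that $K=\tfrac14$ cannot be improved; this matches the remark after Theorem~\ref{HHUP}.

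For (ii), suppose equality holds for some $u\in C_c^\infty(\mathbb{R}^N)$, $u\neq0$. Equality in the $t$-Cauchy--Schwarz step forces $\partial_t\phi_v(t,w)=-c(w)t\phi_v(t,w)$. Equality over the sphere makes the ratio constant in $w$, and equality in the dropped angular term gives $\nabla_w\phi_v=0$. Hence $\phi_v=Ae^{-\alpha t^2}$ with $\alpha>0$, where positivity of $\alpha$ is needed for $\phi_v\in L^2$. Inverting $M_s$ on the zero mode then gives $\widehat{\Phi_s u}(\xi,\ell)=0$ for $\ell\ge1$ and $\widehat{\Phi_s u}(\xi,0)=\widehat{\phi_v}(\xi)/\bar m(\xi,0)$. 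The contradiction comes from analyticity. Because $\Phi_s u=e^{\frac{N-2s}{2}t}u(e^tw)$ and $u\in C_c^\infty$, the function $\Phi_s u$ decays like $e^{\frac{N-2s}{2}t}$ as $t\to-\infty$ and vanishes for large $t$. Its Fourier transform therefore extends holomorphically to the half-plane $\operatorname{Im}\xi>-\frac{N-2s}{2}$ and has at most exponential growth in the upper half-plane, by Paley--Wiener. On the other hand, $\widehat{\phi_v}/\bar m$ equals a Gaussian divided by $\bar m$. The factor $\bar m(\xi,0)^{-1}$ is built from Gamma ratios via \eqref{PS}, and the Gaussian decays like $e^{-\xi^2/(4\alpha)}$ along the real line but grows like $e^{+\eta^2/(4\alpha)}$ along the imaginary direction $\xi=i\eta$, which is incompatible with the Paley--Wiener growth bound. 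I expect this step to be the main difficulty. If the analytic continuation of $P_s$ proves too delicate, a simpler fallback is available: a nonzero $C_c^\infty$ function cannot have $\Phi_s u$ supported only on the $\ell=0$ mode while also matching the prescribed Gamma-ratio profile. One can compare the $t\to+\infty$ behaviour, where $\Phi_s u$ vanishes identically for large $t$ but $M_s^{-1}$ of a Gaussian does not, since $M_s^{-1}$ maps $e^{-\alpha t^2}$ to a real-analytic function of $t$.
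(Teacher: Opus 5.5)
Your proposal is correct, and at the two delicate points it takes a different and more careful route than the paper.

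The paper runs the same equality analysis (Steps 1--3) and evaluates $M_T$, $V_T$, $\delta_2$ on $Ae^{-\alpha t^2}$. It then writes $\delta_{s,2}(u_*)=\delta_2(v_*)$ for $u_*=T^{-1}[v_*]$ without asking whether $u_*$ is admissible. For (ii) it only observes that $v_*$ is not compactly supported and concludes that $u_*$ cannot be either. Since $T$ is nonlocal, compactly supported $u$ generally have non-compactly supported $T[u]$, so that inference is not justified as written. Your analyticity argument supplies the missing rigidity.

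Of your two versions, lead with the fallback, which is complete as it stands. The spherical mean $g$ of $\Phi_s u$ satisfies $\widehat g(\xi)=c_A\,e^{-\xi^2/(4\alpha)}/\bar m(\xi,0)=O\bigl((1+|\xi|)^{1-s}e^{-\xi^2/(4\alpha)}\bigr)$ on $\mathbb{R}$, using positivity of $\bar m$ and Theorem~\ref{properties of T}(ii). Hence $g$ extends to an entire function of $t$. It vanishes for $t>\ln R$, so $g\equiv0$ and $A=0$.

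The Paley--Wiener version can also be finished, but not along $\xi=i\eta$, where $P_s(\cdot,0)$ has poles at $\eta=\frac{N+2s}{2}+2n$. Square the identity to $\widehat g^{\,2}P_s(\xi,0)=c\,\xi^2e^{-\xi^2/(2\alpha)}$. Then compare growth on a ray $\arg\xi=\theta\in(\frac{\pi}{4},\frac{3\pi}{4})\setminus\{\frac{\pi}{2}\}$, where Stirling plus the reflection formula give $|P_s(\xi,0)|=O(|\xi|^{2s})$.

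In (i) your remedy is right but your diagnosis of the obstacle is off. $\widehat{\phi_u}=\widehat\phi/\bar m(\cdot,0)$ is a Gaussian times a symbol of order $1-s$, so $\phi_u$ is Schwartz in $t$. What fails is that $u=|x|^{-\frac{N-2s}{2}}\phi_u(\ln|x|)$ needs faster-than-exponential decay of $\phi_u$ as $t\to+\infty$.

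This failure always occurs. For a preimage $u\in\mathcal{S}(\mathbb{R}^N)$, $\widehat g$ would be holomorphic on a half-plane containing one of the simple poles $\pm i\frac{N+2s}{2}$ of $P_s(\cdot,0)$, which come from $\Gamma\bigl(\frac{N+2s\pm2i\xi}{4}\bigr)$. Then $\widehat g^{\,2}=c\,\xi^2e^{-\xi^2/(2\alpha)}/P_s$ would have a simple zero there, which is impossible for a square. So $\frac14$ is never attained on $\mathcal{S}(\mathbb{R}^N)$, and ``attained'' in (i) can only mean attained by cylinder profiles. Your approximation step is what actually proves optimality in the class of the theorem, consistent with the remark after Theorem~\ref{HHUP}.

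Two details in that step need fixing:
\begin{itemize}
\item Since $M_s$ has order $s-1$, $H^1$ convergence of $M_s\Phi_s u_n$ needs $\Phi_s u_n\to\phi_u$ in $H^{s}$. This is stronger than $H^{1-s}$ when $s>\frac12$.
\item Convergence in $L^2(t^2\,dt\,dw)$ does not follow from $\bar m\in L^\infty$ alone. Commuting $t$ past $M_s$ produces the multiplier $\partial_\xi\bar m$, which is bounded here.
\end{itemize}
Taking radial $\Phi_s u_n\in C_c^\infty(\mathcal{C})$, i.e.\ $u_n\in C_c^\infty(\mathbb{R}^N\setminus\{0\})$, handles both.
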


\medskip

\begin{proof}
We proceed by analyzing the exact conditions required for equality in the two inequalities utilized in the proof of Lemma~\ref{preliminary lemma}. Let $v = T[u]$ and let its cylindrical representation be $\phi_{v}(t,w) = e^{\frac{N-2}{2}t} v(e^{t}w)$.

\medskip 

\textbf{Step 1:} (The Angular Gradient Vanishing Condition)

Exploiting Lemma~\ref{preliminary lemma},we have 

\begin{equation} \label{ineq_angular}\delta_{2}(v) = \int_{\mathbb{S}^{N-1}} \int_{\mathbb{R}} \left( |\partial_{t} \phi_{v}|^{2} + |\nabla_{w} \phi_{v}|^{2} \right)  \, dt  \, dw \geq \int_{\mathbb{S}^{N-1}} \int_{\mathbb{R}} |\partial_{t} \phi_{v}|^{2} \,  dt \,  dw.
\end{equation}

Equality in \eqref{ineq_angular} holds if and only if the angular gradient vanishes almost everywhere \begin{equation*}|
\nabla_{w} \phi_{v}(t,w)|^{2} = 0 \quad \emph{i.e.,} \quad \phi_{v}(t,w) \equiv \phi_{v}(t).
\end{equation*}
This implies that to achieve equality, the transformed function $v(x)$ must be purely radial (spherically symmetric) in $\mathbb{R}^{N}.$

\medskip 

\textbf{Step 2:} (The Cauchy-Schwarz Equality Condition)

The second inequality in the proof of Lemma \ref{preliminary lemma} relied on the Cauchy-Schwarz inequality for $L^{2}(\mathbb{R})$: 

\begin{equation} \label{ineq_CS}\left( \int_{\mathbb{R}} t \phi_{v} \partial_{t} \phi_{v} \,  dt \right)^{2} \leq \left( \int_{\mathbb{R}} t^{2} |\phi_{v}|^{2} \,  dt \right) \left( \int_{\mathbb{R}} |\partial_{t} \phi_{v}|^{2}  \, dt \right).
\end{equation}

Therefore, equality holds in the above inequality if and only if the two functions in the integrand are linearly dependent. Therefore, 

\begin{equation*}\partial_{t} \phi_{v}(t) = c  t \phi_{v}(t), \quad \mbox{for some} \ c\in \mathbb{\mathbb{R}}.
\end{equation*}

Hence, upon integrating  with respect to $t$ variable, we obtain

\begin{equation*}\frac{\partial_{t} \phi_{v}(t)}{\phi_{v}(t)} = c t \quad \implies \ln|\phi_{v}(t)| = \frac{c}{2} t^{2} + C \quad \implies \phi_{v}(t) = A e^{\frac{c}{2}t^{2}},
\end{equation*}
where $A \neq 0$ (for non-zero $\phi_v$) is an arbitrary constant.

\medskip 

\textbf{Step 3:} (The Integrability Requirement)

Moreover, for the quantities $M_{T}(u)$ and $V_{T}(u)$ to be well-defined and finite, the cylindrical function $\phi_{v}(t)$ must belongs to $L^2(\mathbb{R}),$ i.e., 

\begin{equation*}
\int_{\mathbb{R}} |\phi_{v}(t)|^{2}  dt = \int_{\mathbb{R}} A^{2} e^{c t^{2}}  dt < \infty.
\end{equation*}

This integral converges if and only if $c < 0$.  Thus, the extremizing profiles on the cylinder must take the form of Gaussian

\begin{equation} \label{Extremizer_Gaussian}
\phi_{*}(t) = A e^{-\alpha t^{2}} \quad \text{for } \alpha > 0, \quad A \neq 0.
\end{equation}

\medskip 

{\bf Proof of $(i)$:} We shall prove that the constant $\frac{1}{4}$ is sharp. By evaluating the capacities directly using the Gaussian extremizer $\phi_{*}(t) = e^{-\alpha t^{2}}$ (taking $A=1$ without loss of generality) over the measure space of the cylinder, incorporating the spherical volume $|\mathbb{S}^{N-1}|$. The integral takes the value 

\begin{equation*}
M_{T} = |\mathbb{S}^{N-1}| \int_{\mathbb{R}} e^{-2\alpha t^{2}}  dt = |\mathbb{S}^{N-1}| \sqrt{\frac{\pi}{2\alpha}}.
\end{equation*}

The logarithmic variance integral takes the value 
\begin{equation*}V_{T} = |\mathbb{S}^{N-1}| \int_{\mathbb{R}} t^{2} e^{-2\alpha t^{2}} , dt = |\mathbb{S}^{N-1}| \frac{1}{4\alpha} \sqrt{\frac{\pi}{2\alpha}}.
\end{equation*}

Moreover, translating the extremizing profile $\phi_{*}(t)$ back to the Euclidean space. Using the cylindrical mapping

\begin{equation*}v_{*}(x) = |x|^{-\frac{N-2}{2}} \phi_{*}(\ln|x|) = A |x|^{-\frac{N-2}{2}} e^{-\alpha (\ln|x|)^{2}}.
\end{equation*}
Since
$e^{-\alpha (\ln|x|)^{2}} > 0$ for all $x \in \mathbb{R}^{N} \setminus \{ 0 \}$, the function $v_{*}(x)$ has support in $\mathbb{R}^{N} \setminus \{ 0 \}.$ The integral involving derivative (which equals $\delta_2(v_{*})$ because the angular gradient is zero), we conclude
\begin{align*}\partial_{t} \phi_{*}(t) = -2\alpha t e^{-\alpha t^{2}}.
\end{align*}
Hence it implies 
\begin{align*}
 \delta_2(v_{*})& = |\mathbb{S}^{N-1}| \int_{\mathbb{R}} 4\alpha^{2} t^{2} e^{-2\alpha t^{2}}  dt \\
&= |\mathbb{S}^{N-1}| 4\alpha^{2} \left( \frac{1}{4\alpha} \sqrt{\frac{\pi}{2\alpha}} \right) = |\mathbb{S}^{N-1}| \alpha \sqrt{\frac{\pi}{2\alpha}}.
\end{align*}

Now, we evaluate the quotient  of the Uncertainty Principle exactly using the above values of the integral

\begin{equation*}\frac{\delta_2(v_{*})V_{T}}{\left( M_{T} \right)^{2}} = \frac{\left( |\mathbb{S}^{N-1}| \alpha \sqrt{\frac{\pi}{2\alpha}} \right) \left( |\mathbb{S}^{N-1}| \frac{1}{4\alpha} \sqrt{\frac{\pi}{2\alpha}} \right)}{\left( |\mathbb{S}^{N-1}| \sqrt{\frac{\pi}{2\alpha}} \right)^{2}}
\;=\; \frac{1}{4}. 
\end{equation*}
Finally exploiting the fact that $T$ is deficit-preserving, $\delta_{s,2}(u_{*}) = \delta_2(v_{*})$, proving the fractional quotient exactly $\frac{1}{4}.$ This proves part $(i).$

\medskip 

{\bf Proof of part $(ii)$:} We now prove Non-Existence of strict inequality in $C^{\infty}_{c}(\mathbb{R}^{N}).$ As before translating the extremizing profile $\phi_{*}(t)$ back to the Euclidean space we see that  $v_{*}(x)$ has support over the whole $\mathbb{R}^{N} \setminus \{ 0 \}$. It does not have compact support. Consequently, its inverse fractional transformation $u_{*} = T^{-1}[v_{*}]$ cannot have compact support either. Therefore, no function strictly within $C^{\infty}_{c}(\mathbb{R}^{N})$ can satisfy the equality, proving part $(ii)$ of the Theorem.
\end{proof}

\medskip 

{\bf Proof of Theorem~\ref{HHUP}} The proof follows by combining Theorem~\ref{thm:fractional-hardy-heisenberg} and Theorem~\ref{thmsharp}. 

\medskip 

\section*{Acknowledgments} 
A. Banerjee is supported by the Doctoral Fellowship of the Indian Statistical Institute, Delhi Centre. The research of D. Ganguly is partially supported by the ANRF MATRICS Research Grant (MTR/2023/000331) and ANRF-Advanced Research Grant (ANRF/ARG/2025/000572/MS). V. Sahu gratefully acknowledges the financial support from ANRF through the National Post Doctoral Fellowship (PDF/2025/004611).



\end{document}